\newcommand{\brk}[1]{\left(#1\right)}          
\newcommand{\Brk}[1]{\left[#1\right]}          
\newcommand{\BRK}[1]{\left\{#1\right\}}        
\newcommand{\Abs}[1]{\left| #1 \right|}        
\newcommand{\Norm}[1]{\left\| #1 \right\|}     
\newcommand{\jump}[1]{[\![#1]\!]}
\newcommand{\e}{\varepsilon}
\newcommand{\D}{\mathcal{D}}
\renewcommand{\to}{\rightarrow}
\newcommand{\str}{{\boldsymbol{\tau}}}
\newcommand{\strs}{{\boldsymbol{\sigma}}}
\newcommand{\lstr}{{\boldsymbol{\psi}}}
\newcommand{\vel}{{\boldsymbol{u}}}
\newcommand{\bv}{\boldsymbol{v}}
\newcommand{\gvel}{{\grad\vel}}
\newcommand{\B}{\boldsymbol{B}}
\newcommand{\Om}{\boldsymbol{\Omega}}
\newcommand{\N}{\boldsymbol{N}}
\newcommand{\elstr}{e^{\lstr}}
\newcommand{\mlstr}{e^{-\lstr}}
\newcommand{\bphi}{\boldsymbol{\phi}}
\newcommand{\velh}{\vel_h}
\newcommand{\Omh}{\Om_h}
\newcommand{\Nh}{\N_h}
\newcommand{\ph}{p_h}
\newcommand{\strh}{\strs_h}
\newcommand{\lstrh}{\lstr_h}
\newcommand{\elstrh}{e^{\lstrh}}
\newcommand{\velhn}{\vel_h^n}
\newcommand{\velhnp}{\vel_h^{n+1}}
\newcommand{\gvelhnp}{\gvel_h^{n+1}}
\newcommand{\phn}{p_h^n}
\newcommand{\phnp}{p_h^{n+1}}
\newcommand{\strhn}{\strs_h^n}
\newcommand{\strhnp}{\strs_h^{n+1}}
\newcommand{\lstrhn}{\lstr_h^n}
\newcommand{\lstrhnp}{\lstr_h^{n+1}}
\newcommand{\elstrhn}{e^{\lstrhn}}
\newcommand{\elstrhnp}{e^{\lstrhnp}}
\newcommand{\emstrhnp}{e^{-\lstrhnp}}
\newcommand{\dt}{\Delta t}
\newcommand{\pih}[1]{\pi_h\brk{#1}}
\newcommand{\Ph}{P_h}
\newcommand{\Phrot}{P_h^{rot}}
\newcommand{\PhBDM}{P_h^{BDM}}
\newcommand{\PhRTO}{P_h^{RT_0}}
\newcommand{\I}{{\boldsymbol{I}}}
\newcommand{\x}{{\boldsymbol{x}}}
\newcommand{\bn}{\boldsymbol{n}}
\newcommand{\Wi}{{\text{Wi}}}
\renewcommand{\Re}{{\text{Re}}}
\renewcommand{\div}{\operatorname{div}}
\newcommand{\tr}{\operatorname{tr}}
\newcommand{\grad}{\boldsymbol{\nabla}}
\newcommand{\R}{\mathbb{R}}
\newcommand{\Pz}{\mathbb{P}_0}
\newcommand{\Po}{\mathbb{P}_1}
\newcommand{\Pod}{\mathbb{P}_{1,disc}}
\newcommand{\Pt}{\mathbb{P}_2}
\newcommand{\dd}{\frac{d(d+1)}{2}}
\newcommand{\pd}[2]{\frac{\partial#1}{\partial#2}}
\newcommand{\deriv}[2]{\frac{d#1}{d#2}}
\newcommand{\intd}{\int_{\D}}
\newtheorem{theorem}{Theorem}[section]
\newtheorem{lemma}{Lemma}[section]
\newtheorem{proposition}{Proposition}[section]
\newtheorem{definition}{Definition}[section]
\newtheorem{remark}{Remark}[section]
\newcommand{\comment}[1]{ }
\footnotesize \textbf{CERMICS}, Ecole Nationale des Ponts et Chauss\'ees
\begin{document}

\makeRR

\section{Introduction}

\subsection{The stability issue in numerical schemes for viscoelastic fluids}

An abundant literature has been discussing problems of stability for
numerical schemes discretizing models for \emph{viscoelastic fluids} for over twenty years
(see \cite{keunings-89,keunings-00,fattal-hald-katriel-kupferman-07,lee-xu-06} for a small sample).
Indeed, the numerical schemes for macroscopic constitutive equations
are known to suffer from instabilities when a parameter, the \emph{Weissenberg number}, increases.
Such an unstable behavior for viscoelastic fluids at moderately high Weissenberg ($\Wi \le 10$) 
is known to be unphysical.

Many possible reasons of that so-called \emph{high Weissenberg number problem} (HWNP)
have been identified \cite{keiller-92,kwon-leonov-95,sandri-99,fattal-kupferman-05}.
However, these results have not led yet to a complete understanding
of the numerical instabilities, despite some progress
\cite{fattal-kupferman-05,hulsen-fattal-kupferman-05}. Roughly speaking,
we can distinguish between three possible causes of the HWNP:
\begin{enumerate}
\item {\em Absence of stationary state:} In many situations (flow past a cylinder, 4:1 contraction), 
  the existence of a stationary state for viscoelastic models is still under investigation. 
  It may happen that the non-convergence of the numerical scheme is simply due to the fact that, 
  for the model under consideration, there exists no stationary state.
\item {\em Bad model:} More generally, the instabilities observed for the
  numerical scheme may originate at the continuous level, if the
  solution to the problem indeed blows up in finite time, or
  if it is not sufficiently regular to be well approximated in the discretization spaces.
\item {\em Bad numerical scheme:} It may also happen that the problem at
  the continuous level indeed admits a regular solution, and the instabilities
  are only due to the discretization method.
\end{enumerate}
In this paper, we focus on the third origin of instabilities, and we propose
a criterion to test the stability of numerical schemes. More precisely,
we look {\em under which conditions a numerical scheme does not bring
spurious free energy} in the system. We concentrate on the Oldroyd-B model, for which
a \emph{free energy dissipation} is known to hold at the continuous level 
(see Theorem~\ref{th:free-energy-classic} below and~\cite{hu-lelievre-07}) 
and we try to obtain a similar dissipation at the discrete level. 
It is indeed particularly important that no spurious free energy be brought to the system 
in the long-time computations, which are often used as a way to obtain the stationary state.

The Oldroyd-B system of equations is definitely not a good
physical model for dilute polymer fluids. In particular, it can be derived from a kinetic theory,
with dumbbells modeling polymer molecules that are unphysically assumed to be infinitely extensible.
But from the mathematical viewpoint, it is nevertheless a good first step into
the study of \emph{macroscopic constitutive equations} for viscoelastic fluids.
Indeed, it already contains mathematical difficulties common to most of the viscoelastic models, 
while its strict equivalence with a kinetic model allows for a deep
understanding of this set of equations. Let us also emphasize that the
free energy dissipation we use and the numerical schemes we
consider are not restricted to the Oldroyd-B model: they can be
generalized to many other models (like FENE-P for instance,
see~\cite{hu-lelievre-07}), so that we believe that our analysis can be
used as a guideline to derive ``good'' numerical schemes for many
macroscopic models for viscoelastic fluids.

\subsection{Mathematical setting of the problem}

We consider the Oldroyd-B model for dilute polymeric fluids in $d$-dimensional flows ($d=2,3$). 
Confined to an open bounded domain $\D\subset\R^d$, the fluid is governed by 
the following nondimensionalized system of equations:
\begin{equation}
\left\{
\begin{gathered}
\Re \brk{\pd{\vel}{t} + \vel\cdot\gvel} =  -\grad p + (1-\e)\Delta\vel +  \div\str, \\
\div\vel = 0, \\
\pd{\str}{t} + (\vel\cdot\grad)\str = (\gvel)\str + \str (\gvel)^T
-\frac{1}{\Wi}\str + \frac{\e}{\Wi} \Brk{\gvel + \gvel^T},
\end{gathered}
\right.
\label{eq:oldroyd-b-tau}
\end{equation}
where
 $\vel : (t,\x)\in[0,T)\times\D\to\vel(t,\x)\in\R^d$ is the velocity of the fluid, 
 $p : (t,\x)\in[0,T)\times\D\to p(t,\x)\in\R$ is the pressure and
 $\str : (t,\x)\in[0,T)\times\D\to\str(t,\x)\in\R^{d\times d}$ is the
 extra-stress tensor. The following parameters are dimensionless: the Reynolds number $\Re \in \R_+$, the Weissenberg number $\Wi \in \R_+^*$ and the elastic viscosity to total viscosity fraction $\e \in (0,1)$.

In all what follows, we assume for the sake of simplicity that the system~\eqref{eq:oldroyd-b-tau}
is supplied with homogeneous Dirichlet boundary conditions for the velocity $\vel$:
\begin{equation}\label{eq:Diric}
\vel=0 \text{ on $\partial\D$.}
\end{equation}
Therefore, we study the energy dissipation of the
equations~\eqref{eq:oldroyd-b-tau} as time goes, 
that is, the way $\brk{\vel,\str}$ converges to the stationary state $(0,0)$ (equilibrium) 
in the longtime limit $t \to \infty$. For free energy estimates under non-zero boundary conditions,
we refer to~\cite{jourdain-le-bris-lelievre-otto-06}.

Local-in-time existence results for the above problem have been proved in the bounded domain $[0,T)\times\D$
when the system is supplied with sufficiently smooth initial conditions $\vel(t=0)$ and $\str(t=0)$ (see~\cite{guillope-saut-90-a} and~\cite{fernandez-cara-guillen-ortega-02} for instance).
Moreover, global-in-time smooth solutions of the system~\eqref{eq:oldroyd-b-tau} are
known to converge exponentially fast to equilibrium in the sense defined
in~\cite{jourdain-le-bris-lelievre-otto-06}. 
Let us also mention the work of F.-H.~Lin, C.~Liu and P.W.~Zhang~\cite{lin-liu-zhang-05}
where, for Oldroyd-like models, local-in-time existence and uniqueness results are proven,
but also global-in-time existence and uniqueness results for small data. 
Notice that more general global-in-time results have been collected only 
for a mollified version of the Oldroyd-B system~\eqref{eq:oldroyd-b-tau} (see~\cite{barrett-schwab-suli-05}),
for another system close to~\eqref{eq:oldroyd-b-tau}, 
the co-rotational Oldroyd-B system (see~\cite{lions-masmoudi-00}),
or in the form of a Beale-Kato-Majda criterion when $\D=\R^3$ (see~\cite{kupferman-mangoubi-titi}).
Even though the question of the global-in-time existence 
for some solutions of the Oldroyd-B system~\eqref{eq:oldroyd-b-tau} is still out-of-reach,
it is possible to analyze global-in-time existence for solutions to
\emph{discretizations} of that system. This will be one of the output of
this article.

\subsection{Outline of the paper and results}

We will show that it is possible to build numerical schemes discretizing
the   Oldroyd-B  system~\eqref{eq:oldroyd-b-tau}--\eqref{eq:Diric}  such
that solutions  to those discretizations satisfy a  free energy estimate
similar                to                that                established
in~\cite{jourdain-le-bris-lelievre-otto-06,hu-lelievre-07}   for  smooth
solutions to  the continuous  equations.  Our approach  bears similarity
with~\cite{lozinski-owens-03},   where  the   authors   also  derive   a
discretization  that  preserves  an  energy estimate  satisfied  at  the
continuous    level,    and    with~\cite{lee-xu-06},   where    another
discretization   is   proposed  for   the   same   energy  estimate   as
in~\cite{lozinski-owens-03}.      Yet,      unlike     the     estimates
in~\cite{lozinski-owens-03,lee-xu-06},   our  estimate,   the  so-called
\emph{free              energy}             estimate             derived
in~\cite{jourdain-le-bris-lelievre-otto-06,hu-lelievre-07},  ensures the
long-time   stability  of  solutions.   As  mentioned   above,  long-time
computations are indeed often used to obtain a stationary state.

We also analyze discretizations of the log-formulation presented in~\cite{fattal-kupferman-04,fattal-kupferman-05}, 
where the authors suggest to rewrite the set of equations~\eqref{eq:oldroyd-b-tau} 
after mapping the (symmetric positive definite) \emph{conformation tensor}:
\begin{equation}\label{eq:conformation-tensor}
\strs = \I + \frac{\Wi}{\e}\str
\end{equation}
to its matrix logarithm:
\[
 \lstr = \ln \strs.
\]
In the following, we assume that:
\begin{equation}\label{eq:hyp_strs}
\text{ $\strs(t=0)$ is \emph{symmetric positive definite},}
\end{equation}
and it can be shown that this property is propagated in time
(see Lemma~\ref{lem:spd} below), so that $\lstr$ is indeed well defined.
The log-formulation insures, by construction, 
that the conformation tensor always remains symmetric positive definite,
even after discretization. This is not only an important physical characteristic of the Oldroyd-B model
but also an essential feature in the free energy estimates derived beneath. 
Besides, this log-formulation has indeed been observed to improve 
the stability of some computations~\cite{kwon-04,fattal-kupferman-05,hulsen-fattal-kupferman-05}. 
It is thus interesting to investigate whether the numerical success of this log-formulation 
may be related to the free energy dissipation.

The main outputs of this work are:
\begin{itemize}
\item One crucial feature of the numerical scheme to obtain free energy
  estimates is the discretization of the advection term
  $(\vel\cdot\grad)\str$ (or $(\vel\cdot\grad)\lstr$) in the equation on the extra-stress tensor. We
  will analyze below two types of discretization: characteristic method,
  and discontinuous Galerkin method (see Sections~\ref{sec:P0} and~\ref{sec:P1}).
\item To obtain free energy estimates, we will need the extra-stress
  tensor to be discretized in a (elementwise) discontinuous finite
  element space (see Sections~\ref{sec:P0} and~\ref{sec:P1}). 
  Yet, using a collocation method to compute bilinear forms,
  and with special care dedicated to the discretization of the advection terms,
  it would still be possible to use continuous finite element spaces,
  as it will be shown in a future work~\cite{barrett-boyaval}.
\item The existence of a solution to the numerical schemes that satisfy a free energy estimate 
  will be proved whatever the time step for the log-formulation in terms of $\lstr$, 
  while it will be shown under a CFL-like condition for the usual formulation in terms of $\str$ 
  (see Section~\ref{sec:stability}). This is due to the necessary positivity of the conformation tensor $\strs$
  for the free energy estimates in the usual formulation to hold,
  and may be related to the fact that the log-formulation has been reported to be more stable
  than the formulation in terms of $\str$ (see~\cite{hulsen-fattal-kupferman-05}),
  though the uniqueness of solutions is still not ensured in the long time limit and bifurcations may still occur.
  This result will also be re-investigated in the future work~\cite{barrett-boyaval}
  for a slightly modified formulation of the Oldroyd-B system of equations in terms of $\str$,
  somewhat closer to the usual formulation than the log-formulation.
\end{itemize}
Notice that we concentrate on stability issues. All the schemes we
analyze are of course consistent, but we do not study the order of
consistency of these schemes, neither the order of convergence. 
Again, this will be included in the future work~\cite{barrett-boyaval}
for the slightly modified formulation of the Oldroyd-B system of equations in terms of $\str$
mentioned above.

Let us now make precise how the paper is organized. In Section~\ref{sec:continuous}, we formally derive the
free energy estimates for the Oldroyd-B set of equations and
for its logarithm formulation, in the spirit of
\cite{hu-lelievre-07}. Then, Section~\ref{sec:construction} is devoted to the presentation of a 
finite element scheme (using piecewise constant approximations of
the conformation tensor and its log-formulation, and Scott-Vogelius
finite elements for the velocity and pressure),
  that is shown to satisfy a discrete free energy
estimate in Section~\ref{sec:P0}.  Some variants of this discretization
are also studied, still for piecewise constant stress tensor, and a summary of the requirements on the discretizations to satisfy a
free energy estimate is provided in Tables~\ref{tab:P0-summary} and~\ref{tab:finite element-summary}.
We show in Section~\ref{sec:P1} how to use an interpolation operator so as to adapt 
the previous results with piecewise linear approximations of the
conformation tensor and its log-formulation. Finally, in
Section~\ref{sec:stability}, we show how the previous stability results
can be used to prove long-time existence results for the discrete solutions. 
Some numerical studies are needed to illustrate this numerical
analysis, and this is a work in progress.

\subsection{Notation and auxiliary results}

In the following, we will make use of the usual notation: 
$L^2(\D)=\{f:\D \to \R, \int |f|^2 < \infty \}$, $H^1(\D)=\{f:\D \to \R, \int |f|^2 + |\nabla f|^2 < \infty \}$,  $H^2(\D)=\{f:\D \to \R, \int |f|^2 + |\nabla f|^2 + |\nabla^2 f|^2 < \infty \}$,  
$C([0,T))$ for continuous functions on $[0,T)$ and $C^1([0,T))$ for continuously differentiable functions on $[0,T)$.

We will denote by $\str:\strs$ the double contraction between rank-two
tensors (matrices) $\str$, $\strs \in \R^{d\times d}$:
$$\str:\strs=\tr(\str \strs^T)=\tr(\str^T \strs)=\sum_{1 \le i,j \le d} \str_{ij} \strs_{ij}.$$
Notice that if $\str$ is antisymmetric and $\strs$ symmetric, $\str:\strs=0$.

The logarithm of a positive definite diagonal matrix is a diagonal matrix with, on its diagonal, 
the logarithm of each entry. We define the logarithm of any symmetric positive definite matrix $\strs$ 
using a diagonal decomposition $\strs=R^T\Lambda R$ of $\strs$
with $R$ an orthogonal matrix and $\Lambda$ a positive definite diagonal matrix:
\begin{equation}\label{eq:log-definition}
\ln \strs = R^T \, \ln\Lambda \, R.
\end{equation}
Although the diagonal decomposition of $\strs$ is not unique, \eqref{eq:log-definition} uniquely defines $\ln\strs$.
The matrix logarithm bijectively maps the set of 
symmetric positive definite matrices with real entries $\mathcal{S}_+^*(\R^{d\times d})$
to the vector subspace $\mathcal{S}(\R^{d\times d})$ of symmetric real matrices,
where it is exactly the reciprocal function of the matrix exponential.

We will make use of the following simple algebraic formulae, which are proved in Appendix \ref{ap:alg_spd} and \ref{ap:dtrdt}.
\begin{lemma}\label{lemma:alg_spd} 
Let $\strs$ and $\str$ be two symmetric positive definite matrices. We have:
\begin{equation}\label{eq:trln}
\tr \ln \strs = \ln \det \strs,
\end{equation}
\begin{equation}\label{eq:tr-ln}
\strs - \ln \strs - \I \text{ is symmetric positive semidefinite and thus }\tr(\strs - \ln \strs - \I) \ge 0, 
\end{equation}
\begin{equation}\label{eq:tr-plus_inv}
\strs + \strs^{-1}  - 2 \I \text{ is symmetric positive semidefinite and thus } \tr(\strs + \strs^{-1}  - 2 \I) \ge 0, 
\end{equation}
\begin{equation}\label{eq:AB}
\tr(\strs\str) = \tr(\str\strs) \ge 0,
\end{equation}
\begin{equation}\label{eq:traceAB}
\tr\brk{ (\strs-\str) \str^{-1} } = \tr(\strs \str^{-1}-\I) \ge \ln\det(\strs \str^{-1})= \tr\brk{ \ln \strs - \ln \str},
\end{equation}
\begin{equation}\label{eq:traceAB+}
\tr\brk{\brk{ \ln \strs - \ln \str} \strs } \geq \tr\brk{\strs - \str}.
\end{equation}
\end{lemma}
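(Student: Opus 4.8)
The plan is to reduce each assertion to an elementary scalar fact by means of a spectral decomposition and repeated use of the cyclicity of the trace; only~\eqref{eq:traceAB+} will require a genuinely non-commutative argument. Throughout, write $\strs = R^T\Lambda R$ as in~\eqref{eq:log-definition}, with $R$ orthogonal and $\Lambda = \mathrm{diag}(\lambda_1,\dots,\lambda_d)$, $\lambda_i>0$, so that $\ln\strs = R^T(\ln\Lambda)R$ with $\ln\Lambda = \mathrm{diag}(\ln\lambda_1,\dots,\ln\lambda_d)$. Conjugation by the orthogonal matrix $R$ preserves both symmetry and the cone of positive semidefinite matrices, so to show that a matrix of the form $R^T g(\Lambda)R$ is symmetric positive semidefinite it suffices to check $g(\lambda_i)\ge 0$ for every $i$; also recall that the trace of a symmetric positive semidefinite matrix is nonnegative.

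For~\eqref{eq:trln}: $\tr\ln\strs = \tr\brk{R^T(\ln\Lambda)R} = \tr(\ln\Lambda) = \sum_i\ln\lambda_i = \ln\prod_i\lambda_i = \ln\det\Lambda = \ln\det\strs$, using $\det R = \pm1$. For~\eqref{eq:tr-ln} and~\eqref{eq:tr-plus_inv}, note $\strs - \ln\strs - \I = R^T(\Lambda - \ln\Lambda - I)R$ and $\strs + \strs^{-1} - 2\I = R^T(\Lambda + \Lambda^{-1} - 2I)R$, so the semidefiniteness (and hence the trace inequalities) follows from the scalar facts $x - \ln x - 1 \ge 0$ (the function $x\mapsto x-\ln x-1$ is convex on $(0,\infty)$ with minimum value $0$ at $x=1$) and $x + x^{-1} - 2 = (x-1)^2/x \ge 0$, valid for all $x>0$.

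For~\eqref{eq:AB}, cyclicity gives $\tr(\strs\str) = \tr(\str\strs)$, and writing $\strs = \strs^{1/2}\strs^{1/2}$ with $\strs^{1/2}$ the symmetric positive definite square root, $\tr(\strs\str) = \tr\brk{\strs^{1/2}\str\strs^{1/2}} \ge 0$ since $\strs^{1/2}\str\strs^{1/2}$ is symmetric positive semidefinite. In~\eqref{eq:traceAB}, the first equality is linearity of the trace, and the last one is~\eqref{eq:trln} applied to $\strs$ and $\str$ combined with $\det(\strs\str^{-1}) = \det\strs/\det\str$. For the middle inequality, observe that $\strs\str^{-1}$ is similar (conjugate by $\str^{1/2}$) to the symmetric positive definite matrix $\str^{-1/2}\strs\str^{-1/2}$, hence has real positive eigenvalues $\mu_1,\dots,\mu_d$; then $\tr(\strs\str^{-1}-\I) = \sum_i(\mu_i-1) \ge \sum_i\ln\mu_i = \ln\prod_i\mu_i = \ln\det(\strs\str^{-1})$, using once more $x-1\ge\ln x$.

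The only delicate point is~\eqref{eq:traceAB+}, because $\strs$ and $\ln\str$ need not commute, so no pointwise spectral argument is available; this is where I expect the main work to lie. I would set $\phi(t) = \tr\exp\brk{(1-t)\ln\str + t\ln\strs}$ for $t\in[0,1]$, so that $\phi(0) = \tr\str$ and $\phi(1) = \tr\strs$. Writing $C = \ln\strs - \ln\str$ and $X(t) = (1-t)\ln\str + t\ln\strs$, the Duhamel formula $\frac{d}{dt}e^{X(t)} = \int_0^1 e^{sX(t)}\,C\,e^{(1-s)X(t)}\,ds$ together with cyclicity of the trace yields $\phi'(t) = \tr\brk{C\,e^{X(t)}}$ (this derivative computation is the content of Appendix~\ref{ap:dtrdt}), so in particular $\phi'(1) = \tr\brk{(\ln\strs - \ln\str)\strs}$. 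Differentiating once more, $\phi''(t) = \int_0^1 \tr\brk{C\,e^{sX(t)}\,C\,e^{(1-s)X(t)}}\,ds$, and since $C$, $e^{sX(t)}$, $e^{(1-s)X(t)}$ are all symmetric, each integrand equals $\tr(G^TG)\ge0$ with $G = e^{sX(t)/2}\,C\,e^{(1-s)X(t)/2}$ (after symmetrizing via cyclicity); hence $\phi$ is convex on $[0,1]$. The convexity inequality $\phi(1) - \phi(0) \le \phi'(1)$ is then precisely $\tr(\strs) - \tr(\str) \le \tr\brk{(\ln\strs - \ln\str)\strs}$, which is~\eqref{eq:traceAB+}. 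Alternatively, one may simply invoke Klein's inequality for the convex function $x\mapsto x\ln x - x$. The step I anticipate as the crux is the identity $\tr\brk{C\,e^{sX}\,C\,e^{(1-s)X}} = \tr(G^TG)$ establishing $\phi''\ge0$; the rest is routine bookkeeping with the spectral decomposition.
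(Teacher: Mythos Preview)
Your treatment of~\eqref{eq:trln}--\eqref{eq:traceAB} is correct and essentially matches the paper's: diagonalize, reduce to the scalar inequalities $x-1\ge\ln x$ and $x+x^{-1}\ge2$, and for the mixed-matrix statements conjugate by a square root to land on a symmetric positive (semi)definite matrix. The paper phrases the conjugation with $\sqrt{D}\,\Omega$ where $\strs=\Omega^TD\Omega$, you use $\strs^{1/2}$ and $\str^{1/2}$; these are the same idea.

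For~\eqref{eq:traceAB+} your argument is correct but genuinely different from the paper's. You establish convexity of $\phi(t)=\tr\exp\bigl((1-t)\ln\str+t\ln\strs\bigr)$ via Duhamel's formula and the identity $\tr(Ce^{sX}Ce^{(1-s)X})=\tr(G^TG)\ge0$, then read off the tangent inequality $\phi(1)-\phi(0)\le\phi'(1)$; this is essentially the Peierls--Bogoliubov / Klein route, and your citation of Lemma~\ref{lemma:dtrdt} for $\phi'(t)=\tr(Ce^{X(t)})$ is apt (take $\strs(t)=e^{X(t)}$ in~\eqref{eq:trace}). The paper instead diagonalizes both matrices simultaneously, $\strs=O^TDO$, $\str=R^T\Lambda R$, sets $\Omega=OR^T$, and observes that
\[
J=\tr\brk{(\ln\strs-\ln\str)\strs-(\strs-\str)}=\sum_i D_i\ln D_i-D_i+\Lambda_i-\sum_{i,j}\Omega_{ij}^2\,D_i\ln\Lambda_j
\]
is an affine function of the doubly stochastic matrix $\{\Omega_{ij}^2\}$; by Birkhoff--von~Neumann its minimum over the polytope is attained at a permutation matrix, where $J=\sum_i D_i\bigl(\Lambda_{s(i)}/D_i-1-\ln(\Lambda_{s(i)}/D_i)\bigr)\ge0$. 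Your approach is slicker and situates the inequality within standard matrix-analysis machinery; the paper's approach is more elementary (no matrix calculus, only linear algebra and the structure of the Birkhoff polytope) and makes the reduction to the scalar inequality $x-1\ge\ln x$ completely explicit.
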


We will also use the usual Jacobi's formula:
\begin{lemma}\label{lemma:dtrdt}
For any symmetric positive definite matrix $\strs(t) \in C^1\left([0,T)\right)$, we have $\forall t \in [0,T)$:
\begin{equation}\label{eq:tracelog}
\brk{\deriv{}{t}\strs}:\strs^{-1} = \tr\brk{\strs^{-1}\deriv{}{t}\strs} = \deriv{}{t}\tr(\ln \strs)
,
\end{equation}
\begin{equation}\label{eq:trace}
\brk{\deriv{}{t}\ln \strs}:\strs = \tr\brk{\strs\deriv{}{t}\ln \strs} = \deriv{}{t}\tr \strs
.
\end{equation}
\end{lemma}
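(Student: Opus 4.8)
The plan is to establish the two identities by reducing everything to the well-known Jacobi formula $\deriv{}{t}\det A = \det A \, \tr(A^{-1}\deriv{}{t}A)$ for a differentiable invertible matrix-valued function $A(t)$, combined with the identity $\tr\ln\strs = \ln\det\strs$ from Lemma~\ref{lemma:alg_spd}. First I would treat \eqref{eq:tracelog}: since $\strs(t)$ is symmetric positive definite and $C^1$, it stays invertible on $[0,T)$, so $\det\strs(t) > 0$ and $t\mapsto\ln\det\strs(t)$ is $C^1$. Differentiating $\tr(\ln\strs) = \ln\det\strs$ (valid pointwise by \eqref{eq:trln}) and applying the chain rule together with Jacobi's determinant formula gives $\deriv{}{t}\tr(\ln\strs) = \frac{1}{\det\strs}\deriv{}{t}\det\strs = \tr\brk{\strs^{-1}\deriv{}{t}\strs}$, which is exactly the right-hand equality; the left-hand equality is just the definition of the double-contraction notation together with the symmetry of $\strs^{-1}$, so $\brk{\deriv{}{t}\strs}:\strs^{-1} = \tr\brk{\brk{\deriv{}{t}\strs}(\strs^{-1})^T} = \tr\brk{\strs^{-1}\deriv{}{t}\strs}$.

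For \eqref{eq:trace} I would argue by substitution. Set $\B(t) = \ln\strs(t)$, which is symmetric (it lies in $\mathcal{S}(\R^{d\times d})$) and $C^1$ in $t$ because the matrix logarithm is a smooth map on $\mathcal{S}_+^*(\R^{d\times d})$ and $\strs$ is $C^1$; moreover $\strs = e^{\B}$ so $\tr\strs = \tr e^{\B}$. The cleanest route is to note that \eqref{eq:tracelog} applied to the positive definite $C^1$ family $\strs$ can be read the other way: writing $\strs = e^{\B}$, we have $\det\strs = e^{\tr\B}$, hence $\tr\strs$ on the one side and... actually the more direct approach is to apply \eqref{eq:tracelog} with the roles reversed — but since \eqref{eq:tracelog} is not symmetric in that way, I would instead differentiate $\tr\strs$ directly: $\deriv{}{t}\tr\strs = \tr\brk{\deriv{}{t}\strs} = \tr\brk{\deriv{}{t}e^{\B}}$, and then use the general formula $\tr\brk{\deriv{}{t}e^{\B}} = \tr\brk{e^{\B}\deriv{}{t}\B}$, which holds because under the trace the Duhamel/Fréchet-derivative expansion $\deriv{}{t}e^{\B} = \int_0^1 e^{s\B}\brk{\deriv{}{t}\B}e^{(1-s)\B}\,ds$ collapses by cyclicity of the trace to $\tr\brk{e^{\B}\deriv{}{t}\B}$. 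This yields $\deriv{}{t}\tr\strs = \tr\brk{\strs\deriv{}{t}\ln\strs} = \brk{\deriv{}{t}\ln\strs}:\strs$, using again symmetry of $\strs$ for the last equality.

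The only genuine subtlety — and the step I would be most careful about — is the regularity/differentiability claim for $\ln\strs(t)$ and $e^{\B(t)}$ as functions of $t$, together with the validity of the trace-collapsed Duhamel identity. This is where the positive-definiteness hypothesis is essential: on $\mathcal{S}_+^*(\R^{d\times d})$ the logarithm is real-analytic (the spectrum stays in $(0,\infty)$, away from the branch cut), so $t\mapsto\ln\strs(t)$ inherits the $C^1$ regularity of $t\mapsto\strs(t)$, and the chain rule applies; symmetrically, $t\mapsto e^{\B(t)}$ is smooth in $\B$. Once differentiability is in hand, the cyclicity-of-trace argument that kills the $s$-integral is a one-line computation. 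I would relegate the detailed verification of these smoothness facts and the Duhamel expansion to the appendix (Appendix~\ref{ap:dtrdt}), since they are standard, and keep the main-text proof at the level of the three displayed chains of equalities above.
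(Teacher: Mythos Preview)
Your argument is correct and takes a genuinely different route from the paper's own proof. The paper diagonalizes $\strs(t)=R(t)^T\Lambda(t)R(t)$ with $R$ orthogonal and $\Lambda$ diagonal, asserts that $R$ and $\Lambda$ can be taken $C^1$ in $t$, differentiates $\ln\strs$ and $\strs$ through this factorization, and then uses $\deriv{}{t}(R^TR)=0$ to kill the cross terms under the trace. Your approach instead combines the scalar identity $\tr\ln\strs=\ln\det\strs$ with Jacobi's determinant formula for \eqref{eq:tracelog}, and the Duhamel expansion of $\deriv{}{t}e^{\B}$ collapsed by cyclicity of the trace for \eqref{eq:trace}. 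The advantage of your route is that it never requires the spectral factors $R(t),\Lambda(t)$ to be differentiable, a point the paper takes for granted but which is delicate at eigenvalue crossings; the determinant and the Duhamel integral are manifestly smooth in $\strs$, so your regularity discussion is on firmer ground. The paper's approach, on the other hand, is more hands-on and avoids invoking the Duhamel formula, at the price of that hidden smoothness assumption on the diagonalization.
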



\section{Formal free energy estimates at the continuous level}
\label{sec:continuous}

We are going to derive free energy estimates
for two formulations of the Oldroyd-B system
 in Theorems~\ref{th:free-energy-classic} and~\ref{th:free-energy-log}.
An important corollary to these theorems is
the exponential convergence of the solutions to equilibrium
in the longtime limit.
In all that follows, we assume that $(\vel,p,\str)$ is a sufficiently \emph{smooth}
solution of problem \eqref{eq:oldroyd-b-tau} so that all the subsequent
computations are valid. For example, the following regularity is sufficient:
\begin{equation}\label{eq:regularity}
(\vel,p,\str) \in 
\brk{C^1\brk{[0,T),H^2(\D)}}^d \times 
\brk{C^0\brk{[0,T),H^1(\D)}} \times 
\brk{C^1\brk{[0,T),C^1(\D)}}^{d\times d},
\end{equation}
where we denote, for instance by $\brk{C^1(\D)}^d$ a vector field of dimension $d$ with $C^1(\D)$ components.

\subsection{Free energy estimate for the Oldroyd-B system}

\subsubsection{Conformation-tensor formulation of the Oldroyd-B system}

Recall that the \emph{conformation} tensor $\strs$ is defined from 
the \emph{extra-stress} tensor $\str$ through the following bijective mapping:
\[
\str = \frac{\e}{\Wi}\brk{\strs - \I}.
\]
With this mapping, it is straightforward to bijectively map the solutions of system~\eqref{eq:oldroyd-b-tau} 
with those of the following system:
\begin{equation}
\left\{
\begin{gathered}
\Re \brk{\pd{\vel}{t} + \vel\cdot\gvel} =  -\grad p + (1-\e)\Delta\vel +  \frac{\e}{\Wi}\div\strs, \\
\div\vel = 0, \\
\pd{\strs}{t} + (\vel\cdot\grad)\strs = (\gvel)\strs + \strs
(\gvel)^T -\frac{1}{\Wi}(\strs-\I).
\end{gathered}
\right.
\label{eq:oldroyd-b-sigma}
\end{equation}
Notice that with such an affine mapping, the solution $\strs$ to system~\eqref{eq:oldroyd-b-sigma}
has the same regularity than $\str$ solution to system~\eqref{eq:oldroyd-b-tau},
which is that assumed in~\eqref{eq:regularity} for the following manipulations.

\subsubsection{A free energy estimate}

Let us first recall a free energy estimate derived in~\cite{jourdain-le-bris-lelievre-otto-06, hu-lelievre-07}.
The free energy of the fluid is defined as the sum of 
two terms as follows:
\begin{equation} \label{def:free-energy-classic}
F(\vel,\strs) = \frac{\Re}{2}\intd|\vel|^2 + \frac{\e}{2\Wi}\intd\tr(\strs - \ln\strs - \I).
\end{equation}

The \emph{kinetic} term $\displaystyle{\intd|\vel|^2}$ is always non negative. 
Besides, we have the following lemma (see~Appendix~\ref{ap:spd} or~\cite{hulsen-90} for a proof):
\begin{lemma}\label{lem:spd}
Let $\strs\in\brk{C^1\brk{[0,T),C^1(\D)}}^{d\times d}$ be a smooth solution to the system~\eqref{eq:oldroyd-b-sigma}.
Then, if the initial condition $\strs(t=0)$ is symmetric positive definite (everywhere in $\D$), 
the solution $\strs(t)$ remains so at all times $t \in [0,T)$ and for all $\x\in\D$.
In particular, the matrix $\strs(t)$ is invertible.
\end{lemma}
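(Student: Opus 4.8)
The plan is to freeze the transport term by the method of characteristics and then to analyze the resulting matrix-valued linear ODE. Fix $\x_*\in\D$ and $t_*\in[0,T)$, and let $s\mapsto\X(s)$ be the characteristic curve solving $\deriv{\X}{s}=\vel(s,\X(s))$ with $\X(t_*)=\x_*$. Since $\vel$ is smooth and vanishes on $\partial\D$, the constant curve through a boundary point solves the characteristic ODE there, so by uniqueness no trajectory starting in $\D$ can reach $\partial\D$; hence $\X$ is defined on $[0,t_*]$ and stays in $\D$, and in particular $\x_0:=\X(0)\in\D$. Along this curve the evolution equation for $\strs$ in~\eqref{eq:oldroyd-b-sigma} reduces, after absorbing $-\frac{1}{\Wi}\strs$ into the coefficient, to
\[
\deriv{}{s}\strs\big(s,\X(s)\big) = \boldsymbol{M}(s)\,\strs\big(s,\X(s)\big) + \strs\big(s,\X(s)\big)\,\boldsymbol{M}(s)^{T} + \frac{1}{\Wi}\I,
\qquad \boldsymbol{M}(s):=(\gvel)\big(s,\X(s)\big)-\tfrac{1}{2\Wi}\I,
\]
with $\boldsymbol{M}$ continuous in $s$ and initial value $\strs(0,\x_0)$ symmetric positive definite by~\eqref{eq:hyp_strs}. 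Writing $\boldsymbol{Y}(s):=\strs(s,\X(s))$, it then suffices to prove: if $\deriv{\boldsymbol{Y}}{s}=\boldsymbol{M}\boldsymbol{Y}+\boldsymbol{Y}\boldsymbol{M}^{T}+\frac{1}{\Wi}\I$ with $\boldsymbol{Y}(0)$ symmetric positive definite, then $\boldsymbol{Y}(s)$ is symmetric positive definite for all $s$. Symmetry is immediate, since $\boldsymbol{Y}^{T}$ solves the very same ODE with the same initial value, so $\boldsymbol{Y}=\boldsymbol{Y}^{T}$ by uniqueness for linear ODEs with continuous coefficients.

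For positive definiteness I would introduce the resolvent $\boldsymbol{R}(s)$, the unique solution of $\deriv{\boldsymbol{R}}{s}=\boldsymbol{M}(s)\boldsymbol{R}$ with $\boldsymbol{R}(0)=\I$, which is invertible for every $s$, and set $\boldsymbol{Z}(s):=\boldsymbol{R}(s)^{-1}\boldsymbol{Y}(s)\boldsymbol{R}(s)^{-T}$. Using $\deriv{}{s}\boldsymbol{R}^{-1}=-\boldsymbol{R}^{-1}\boldsymbol{M}$, the identity one checks is
\[
\deriv{\boldsymbol{Z}}{s} = \boldsymbol{R}^{-1}\brk{ -\boldsymbol{M}\boldsymbol{Y} + \deriv{\boldsymbol{Y}}{s} - \boldsymbol{Y}\boldsymbol{M}^{T} }\boldsymbol{R}^{-T} = \frac{1}{\Wi}\,\boldsymbol{R}^{-1}\boldsymbol{R}^{-T} = \frac{1}{\Wi}\brk{\boldsymbol{R}^{T}\boldsymbol{R}}^{-1},
\]
which is symmetric positive definite for every $s$. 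Integrating,
\[
\boldsymbol{Z}(s) = \boldsymbol{Y}(0) + \frac{1}{\Wi}\int_{0}^{s}\brk{\boldsymbol{R}(r)^{T}\boldsymbol{R}(r)}^{-1}\,dr
\]
is the sum of a symmetric positive definite matrix and a symmetric positive semidefinite one, hence symmetric positive definite; and then $\boldsymbol{Y}(s)=\boldsymbol{R}(s)\boldsymbol{Z}(s)\boldsymbol{R}(s)^{T}$ is symmetric positive definite because $\boldsymbol{R}(s)$ is invertible. Since $\x_*\in\D$ and $t_*\in[0,T)$ were arbitrary, $\strs(t,\x)=\boldsymbol{Y}(t)$ is symmetric positive definite — in particular invertible — for all $t\in[0,T)$ and all $\x\in\D$.

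The only genuinely delicate point is the justification of the characteristic reduction: that the curves $\X$ exist on the whole relevant time interval and remain in $\D$, which rests on the assumed smoothness of $\vel$ and on the boundary condition $\vel=0$ on $\partial\D$; the remainder is elementary linear-ODE algebra. An alternative to the resolvent computation would be to argue by contradiction that the smallest eigenvalue of $\strs$ along a characteristic cannot reach $0$: at a first such instant a corresponding unit eigenvector $\bv$ satisfies $\strs\bv=\boldsymbol{0}$, and differentiating $\bv^{T}\strs\bv$ (the eigenvector contribution vanishing by first-order optimality) yields $\bv^{T}\brk{\tfrac{1}{\Wi}\I}\bv=\tfrac{1}{\Wi}>0$, contradicting minimality — but this route requires extra care about differentiability of the minimal eigenvalue, so I would favour the representation $\boldsymbol{Y}=\boldsymbol{R}\boldsymbol{Z}\boldsymbol{R}^{T}$ above, which makes the positivity transparent.
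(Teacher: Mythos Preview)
Your proof is correct and takes a genuinely different route from the paper's. The paper (Appendix~\ref{ap:spd}) argues by contradiction on the first time $t_0$ at which positive definiteness fails: on $[0,t_0)$ it takes the trace of the equation for $\ln\strs$ to obtain $\frac{D}{Dt}\ln\det\strs=\frac{1}{\Wi}\tr(\strs^{-1}-\I)$, then combines this with the AM--GM inequality $\tr(\strs^{-1})/d\ge(\det\strs)^{-1/d}$ to produce a scalar differential inequality for $(\det\strs)^{1/d}$ that forbids $\det\strs$ from reaching~$0$. Your approach instead stays at the matrix level and is constructive: after the same characteristic reduction, you recognise the resulting equation as a Lyapunov differential equation and write down its Duhamel/variation-of-constants representation $\boldsymbol{Y}=\boldsymbol{R}\boldsymbol{Z}\boldsymbol{R}^{T}$, from which positivity is read off directly. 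This buys you an explicit formula for $\strs$ along characteristics and avoids both the log-formulation and the AM--GM step; conversely, the paper's scalar argument fits naturally with the entropy structure it develops later. Your brief discussion of why characteristics remain in $\D$ (via $\vel=0$ on $\partial\D$ and uniqueness) is a point the paper leaves implicit in its use of $D/Dt$, so you are if anything slightly more careful there.
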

From Lemma~\ref{lem:spd} and the equation~\eqref{eq:tr-ln}, the
\emph{entropic} term $\displaystyle \intd\tr(\strs - \ln\strs - \I)$ is
thus well defined and non negative, provided $\strs(t=0)$ is symmetric positive definite.

The free energy is an interesting quantity to characterize the long-time asymptotics of the solutions, 
and thus the stability of system \eqref{eq:oldroyd-b-sigma}. 
{\em A priori} estimates using the free energy are presented
in~\cite{jourdain-le-bris-lelievre-otto-06} for micro-macro models 
(such as the Hookean or the FENE dumbbell models) 
and in~\cite{hu-lelievre-07} for macroscopic models 
(such as the Oldroyd-B or the FENE-P models). Similar considerations can
be found in the physics literature about thermodynamic theory for
viscoelastic models (see~\cite{leonov-92,beris-edwards-94,oettinger-05,wapperom-hulsen-98}).

For the sake of consistency, we recall results from~\cite{hu-lelievre-07}:
\begin{theorem} \label{th:free-energy-classic}
Let $(\vel,p,\strs)$ be a smooth solution to system
\eqref{eq:oldroyd-b-sigma} supplied with homogeneous Dirichlet
boundary conditions for $\vel$, and with symmetric positive definite initial condition
$\strs(t=0)$. The free energy satisfies:
\begin{equation}
\deriv{}{t}F(\vel,\strs) +(1-\e)\intd|\gvel|^2 + \frac{\e}{2\Wi^2}\intd\tr(\strs
+ \strs^{-1} - 2\I)=0.
\label{eq:free-equality-classic}
\end{equation}
{F}rom this estimate, we get that $F(\vel,\strs)$ decreases exponentially fast in time to zero.
\end{theorem}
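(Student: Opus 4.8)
The plan is to test the three equations of system~\eqref{eq:oldroyd-b-sigma} against well-chosen quantities, sum the resulting identities, and integrate over $\D$, so that the awkward cross-terms cancel and what remains is exactly~\eqref{eq:free-equality-classic}. First I would multiply the momentum equation by $\vel$ and integrate over $\D$: the convective term $\Re\intd(\vel\cdot\gvel)\cdot\vel$ vanishes because $\div\vel=0$ and $\vel=0$ on $\partial\D$, the pressure term $-\intd\grad p\cdot\vel=\intd p\,\div\vel=0$, and integration by parts gives $(1-\e)\intd\Delta\vel\cdot\vel=-(1-\e)\intd|\gvel|^2$ and $\frac{\e}{\Wi}\intd(\div\strs)\cdot\vel=-\frac{\e}{\Wi}\intd\strs:\gvel$ (using symmetry of $\strs$). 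This produces $\frac{\Re}{2}\deriv{}{t}\intd|\vel|^2+(1-\e)\intd|\gvel|^2-\frac{\e}{\Wi}\intd\strs:\gvel=0$.

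Next I would handle the entropic term. The idea is to contract the $\strs$-equation with $\strs^{-1}$, which is licit by Lemma~\ref{lem:spd}, and use Jacobi's formula~\eqref{eq:tracelog} from Lemma~\ref{lemma:dtrdt} to rewrite $(\partial_t\strs):\strs^{-1}=\partial_t\tr\ln\strs$ and similarly $(\vel\cdot\grad)\strs:\strs^{-1}=(\vel\cdot\grad)\tr\ln\strs$; the latter integrates to zero against the incompressibility constraint. For the stretching terms, $\big((\gvel)\strs+\strs(\gvel)^T\big):\strs^{-1}=\tr\big((\gvel)\strs\strs^{-1}\big)+\tr\big(\strs(\gvel)^T\strs^{-1}\big)=2\tr(\gvel)=2\div\vel=0$, which is the key algebraic cancellation. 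The relaxation term gives $-\frac{1}{\Wi}(\strs-\I):\strs^{-1}=-\frac{1}{\Wi}\tr(\I-\strs^{-1})$. Collecting, $\deriv{}{t}\intd\tr\ln\strs=-\frac{1}{\Wi}\intd\tr(\I-\strs^{-1})$. Then I would contract the $\strs$-equation directly with $\I$ (i.e. take the trace and integrate): the advection term vanishes as before, the stretching terms give $2\intd\tr(\gvel\,\strs)$, and the relaxation term gives $-\frac{1}{\Wi}\intd\tr(\strs-\I)$, so $\deriv{}{t}\intd\tr\strs=2\intd\tr(\gvel\,\strs)-\frac{1}{\Wi}\intd\tr(\strs-\I)=2\intd\strs:\gvel-\frac{1}{\Wi}\intd\tr(\strs-\I)$.

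Now I would assemble: multiply the momentum identity as is, add $\frac{\e}{2\Wi}$ times $\big(\deriv{}{t}\intd\tr\strs - \deriv{}{t}\intd\tr\ln\strs\big)$, i.e. $\frac{\e}{2\Wi}\deriv{}{t}\intd\tr(\strs-\ln\strs-\I)$ (the constant $-\I$ contributes nothing to the time derivative). The term $+\frac{\e}{\Wi}\intd\strs:\gvel$ coming from $\frac{\e}{2\Wi}\cdot 2\intd\strs:\gvel$ cancels the $-\frac{\e}{\Wi}\intd\strs:\gvel$ from the momentum balance, and the two relaxation contributions combine as $-\frac{\e}{2\Wi^2}\intd\big(\tr(\strs-\I)+\tr(\I-\strs^{-1})\big)=-\frac{\e}{2\Wi^2}\intd\tr(\strs+\strs^{-1}-2\I)$, with a sign flip that puts it on the left-hand side, yielding precisely~\eqref{eq:free-equality-classic}. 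For the exponential decay, I would bound $F$ from above by the dissipation: the velocity term is controlled by $\intd|\gvel|^2$ through the Poincar\'e inequality on $H^1_0(\D)$, and the entropic term $\intd\tr(\strs-\ln\strs-\I)$ is controlled by $\intd\tr(\strs+\strs^{-1}-2\I)$ via the scalar inequality $x-\ln x-1\le C(x+1/x-2)$ valid for $x$ in any interval $[\lambda,\infty)$ with $\lambda>0$ — applied to the eigenvalues of $\strs$, using that $\tr(\strs+\strs^{-1}-2\I)$ itself bounds the eigenvalues away from $0$ and $\infty$. This gives $\deriv{}{t}F\le -\alpha F$ for some $\alpha>0$ depending on $\Re,\Wi,\e$ and the Poincar\'e constant, hence $F(t)\le F(0)e^{-\alpha t}\to 0$.

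The routine parts are the integrations by parts and the use of $\div\vel=0$; the only genuinely delicate point is the last one — extracting a clean exponential rate requires the comparison between the entropic functional $\tr(\strs-\ln\strs-\I)$ and the dissipative functional $\tr(\strs+\strs^{-1}-2\I)$, which is not a uniform pointwise inequality but must be obtained through an a priori control of the spectrum of $\strs$ (one first shows $\intd\tr(\strs+\strs^{-1}-2\I)$ and $F$ stay bounded, which confines the eigenvalues, and only then does the local comparison apply). I expect this spectral-confinement step to be the main obstacle; everything else is bookkeeping.
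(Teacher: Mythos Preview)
Your derivation of the identity~\eqref{eq:free-equality-classic} is exactly the paper's: test the momentum equation with $\vel$, take the trace of the $\strs$-equation, contract the $\strs$-equation with $\strs^{-1}$ using Jacobi's formula, and combine with weights $1$, $\frac{\e}{2\Wi}$, $-\frac{\e}{2\Wi}$.

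For the exponential decay, however, you are making life harder than necessary. You write that the comparison between $\tr(\strs-\ln\strs-\I)$ and $\tr(\strs+\strs^{-1}-2\I)$ ``is not a uniform pointwise inequality'' and requires an a priori spectral-confinement step. In fact it \emph{is} a uniform pointwise inequality with constant $1$: for every $x>0$ one has $x-\ln x-1\le x+x^{-1}-2$, which is equivalent to $\ln x\ge 1-x^{-1}$ (just differentiate $\ln x-1+x^{-1}$ and observe the minimum is $0$ at $x=1$). Applied eigenvalue by eigenvalue this gives $\tr(\strs-\ln\strs-\I)\le\tr(\strs+\strs^{-1}-2\I)$ for every symmetric positive definite $\strs$---this is exactly~\eqref{eq:tr-ln} applied to $\strs^{-1}$. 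The paper uses this directly, together with Poincar\'e, to get
\[
\deriv{}{t}F\le -\min\!\left(\frac{2(1-\e)}{\Re\,C_P},\frac{1}{\Wi}\right)F,
\]
and Gronwall finishes. So the ``main obstacle'' you anticipate does not exist; drop the spectral-confinement detour and the proof is complete and identical to the paper's.
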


\begin{proof}[Proof of Theorem \eqref{th:free-energy-classic}]
Let $(\vel,p,\strs)$ be a smooth solution to system \eqref{eq:oldroyd-b-sigma},
with symmetric positive definite initial condition $\strs(t=0)$. We first compute the inner product of
the Navier-Stokes equation with the velocity:
\begin{equation}
\frac{\Re}{2}\deriv{}{t}\intd |\vel|^2 = -(1-\e) \intd |\gvel|^2 -
\frac{\e}{\Wi} \intd \gvel : \strs \label{eq:energy1-sigma}.
\end{equation}
Then, taking the trace of the evolution equation for the conformation tensor, we obtain:
\begin{equation}
\deriv{}{t} \intd \tr \strs = 2\intd \gvel : \strs
-\frac{1}{\Wi}\intd \tr(\strs - \I).
\label{eq:energy2-sigma}
\end{equation}
Last, remember that smooth solutions $\strs$ are invertible matrices (Lemma \ref{lem:spd}). Thus, contracting the evolution equation for $\strs$ with $\strs^{-1}$, we get:
\begin{equation}
\intd
\brk{\pd{}{t}\strs+(\vel\cdot\grad)\strs}:\strs^{-1}=\intd\tr(\gvel)-\frac{1}{\Wi}\intd\tr(\I-\strs^{-1}).
\label{eq:energy3prime-sigma}
\end{equation}
Using~\eqref{eq:tracelog} with $\strs \in C^1\brk{\D\times[0,T),\mathcal{S}^\star_+(\R^{d\times d})}$, we find:
\[
\intd \brk{\pd{}{t}\strs+(\vel\cdot\grad)\strs}:\strs^{-1} = \intd
\brk{\pd{}{t}+\vel\cdot\grad}\tr(\ln\strs),
\]
which can be combined with \eqref{eq:energy3prime-sigma} to get, using
$\tr(\gvel)=\div\vel=0$ and $\vel=0$ on $\partial\D$:
\begin{equation} \label{eq:energy3-sigma}
\deriv{}{t} \intd
\tr\,\ln\strs = \frac{1}{\Wi}\intd\tr(\strs^{-1}-\I).
\end{equation}
We now combine \eqref{eq:energy1-sigma} $+\frac{\e}{2\Wi}\times$
\eqref{eq:energy2-sigma} $-\frac{\e}{2\Wi}\times$ \eqref{eq:energy3-sigma}
to obtain \eqref{eq:free-equality-classic}:
\begin{equation*}
\deriv{}{t}\Brk{\frac\Re2\intd|\vel|^2 +
  \frac{\e}{2\Wi}\intd\tr(\strs-\ln\strs - \I)} 
+(1-\e)\intd|\gvel|^2 + \frac{\e}{2\Wi^2}\intd\tr(\strs + \strs^{-1} - 2\I)=0.
\end{equation*}

Since, by~\eqref{eq:tr-plus_inv}, we have $\tr(\strs + \strs^{-1} - 2\I) \ge 0$, 
then $F(\vel,\strs)$ decreases in time. 
Moreover, by~(\ref{eq:tr-ln}) applied to $\strs^{-1}$, we have 
$\tr(\strs - \ln \strs - \I) \le \tr(\strs + \strs^{-1} - 2\I)$.
So, using the Poincar\'e inequality which states that there exists a
constant $C_P$ depending only on $\D$ such that, for all $\vel\in H^1_0(\D)$,
$$\int_\D |\vel|^2 \leq C_P \int_\D |\gvel|^2,$$
we finally obtain that $F(\vel,\strs)$ goes exponentially fast to $0$. Indeed, we
have from~\eqref{eq:free-equality-classic}:
\begin{align*}
\deriv{}{t}F(\vel,\strs)
&\leq -\frac{1-\e}{C_P} \int_\D |\vel|^2 -
\frac{\e}{2\Wi^2}\intd\tr(\strs + \strs^{-1} - 2\I),\\
&\leq - \min\left( \frac{2(1-\e)}{\Re \ C_P},\frac{1}{\Wi}\right) F(\vel,\strs),
\end{align*}
so that, by a direct application of Gronwall's lemma, we get:
$$F(\vel,\strs) \le F(\vel(t=0),\strs(t=0)) 
 \exp \left( \min\left( \frac{2(1-\e)}{\Re \ C_P},\frac{1}{\Wi}\right) t \right) \,.$$
\end{proof}

\subsection{Free energy estimate for the log-formulation of the Oldroyd-B system}

\subsubsection{Log-formulation of the Oldroyd-B system}

Let us now introduce the log-formulation proposed in~\cite{fattal-kupferman-04}. 
We want to map solutions of the system~\eqref{eq:oldroyd-b-sigma} 
with solutions of another system of equations where 
a partial differential equation for the logarithm of the conformation tensor
is substituted to the Oldroyd-B partial differential equation for the conformation tensor $\strs$.

In order to obtain a constitutive equation in terms of $\lstr=\ln\strs$,
following~\cite{fattal-kupferman-04},
we make use of the following decomposition of the deformation tensor
$\gvel \in \R^{d \times d}$ (see Appendix~\ref{app:decomposition} for a proof):
\begin{lemma} \label{lemma:decomposition}
For any matrix $\gvel$ and any symmetric positive definite matrix $\strs$ in $\R^{d \times d}$, 
there exist in $\R^{d \times d}$ 
two antisymmetric matrices $\Om$, $\N$ and a symmetric matrix $\B$ that commutes with $\strs$, such that:
\begin{equation} \label{eq:velocity_decomposition}
\gvel = \Om + \B + \N\strs^{-1}.
\end{equation}
Moreover, we have $\tr\gvel=\tr\B$.
\end{lemma}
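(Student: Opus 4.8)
The plan is to construct the three matrices explicitly after diagonalizing $\strs$, then transfer the construction back to the original basis. Since $\strs$ is symmetric positive definite, write $\strs = R^T \Lambda R$ with $R$ orthogonal and $\Lambda = \operatorname{diag}(\lambda_1,\dots,\lambda_d)$, $\lambda_i>0$. Working in the eigenbasis, set $M = R (\gvel) R^T$; the claim is equivalent to finding antisymmetric $\Om',\N'$ and symmetric $\B'$ commuting with $\Lambda$ (i.e. block-diagonal according to the eigenvalue multiplicities, in particular diagonal if the $\lambda_i$ are distinct) such that $M = \Om' + \B' + \N'\Lambda^{-1}$; then conjugating back by $R^T(\cdot)R$ gives the statement, because conjugation by an orthogonal matrix preserves symmetry, antisymmetry, and the commutation relation with $\strs$.

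The key step is to read off $\Om',\B',\N'$ entrywise. For the diagonal entries: set $B'_{ii} = M_{ii}$ and $\Om'_{ii}=\N'_{ii}=0$. For an off-diagonal pair $(i,j)$ with $i\neq j$, I want $M_{ij} = \Om'_{ij} + \N'_{ij}/\lambda_j$ and $M_{ji} = \Om'_{ji} + \N'_{ji}/\lambda_i = -\Om'_{ij} - \N'_{ij}/\lambda_i$ (using antisymmetry $\Om'_{ji}=-\Om'_{ij}$, $\N'_{ji}=-\N'_{ij}$), with $\B'$ having zero off-diagonal entries so that it commutes with $\Lambda$ — this is the step where the commutation constraint is used, forcing $\B'$ to be diagonal when eigenvalues are simple (and block-diagonal otherwise, which only gives more freedom). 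This is a linear $2\times 2$ system in the unknowns $\Om'_{ij}$ and $\N'_{ij}$:
\begin{equation*}
\begin{pmatrix} 1 & 1/\lambda_j \\ -1 & -1/\lambda_i \end{pmatrix}
\begin{pmatrix} \Om'_{ij} \\ \N'_{ij} \end{pmatrix}
= \begin{pmatrix} M_{ij} \\ M_{ji} \end{pmatrix},
\end{equation*}
whose determinant is $-1/\lambda_i + 1/\lambda_j$. When $\lambda_i \neq \lambda_j$ this is invertible and yields unique $\Om'_{ij},\N'_{ij}$; when $\lambda_i = \lambda_j$ the eigenvalue is repeated, and then I instead absorb the symmetric part of the $(i,j)$ block into $\B'$ (which is allowed since $\B'$ may be nontrivial on that block) and the antisymmetric part into $\Om'$, taking $\N'_{ij}=0$. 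Either way the decomposition exists; I would remark that it is not unique in general.

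Finally, the trace identity: since $\Om$ and $\N$ are antisymmetric they have zero trace, and $\tr(\N\strs^{-1}) = \N : (\strs^{-1})^T = \N:\strs^{-1} = 0$ because $\N$ is antisymmetric and $\strs^{-1}$ is symmetric (using the remark in the Notation section that the double contraction of an antisymmetric and a symmetric matrix vanishes). Hence $\tr\gvel = \tr\Om + \tr\B + \tr(\N\strs^{-1}) = \tr\B$. I do not anticipate a genuine obstacle here; the only point requiring a little care is the degenerate case of repeated eigenvalues of $\strs$, where one must check that the commutation requirement on $\B$ is compatible with (in fact, looser than) what the $2\times2$ systems demand — this is the part I would write out most carefully.
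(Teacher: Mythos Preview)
Your proposal is correct and follows essentially the same route as the paper: diagonalize $\strs$, work in the eigenbasis, and read off $\B$ from the diagonal of $\gvel$ while solving for $\Om,\N$ from the off-diagonal part; the paper packages the off-diagonal step via the linear map $f_\strs:\N\mapsto\tfrac12(\N\strs^{-1}-\strs^{-1}\N)$ rather than your explicit $2\times2$ systems, but the content is identical. Your handling of the repeated-eigenvalue case (absorbing the symmetric part of the block into $\B'$) is in fact more explicit than the paper's, which tacitly assumes $f_\strs$ is invertible and only remarks at the end that uniqueness fails when eigenvalues coincide.
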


We now proceed to the change of variable $\lstr = \ln \strs$.
The system~\eqref{eq:oldroyd-b-sigma} then rewrites
(see~\cite{fattal-kupferman-04} for a proof):
\begin{equation}
\left\{
\begin{gathered}
\Re \brk{\pd{\vel}{t} + \vel\cdot\gvel} =  -\grad p + (1-\e)\Delta\vel +  \frac{\e}{\Wi}\div  \elstr,\\
\div\vel = 0, \\
\pd{\lstr}{t} + (\vel\cdot\grad)\lstr = \Om\lstr - \lstr\Om + 2\B + \frac{1}{\Wi}(e^{-\lstr}-\I).
\end{gathered}
\right.
\label{eq:oldroyd-b-log}
\end{equation}
It is supplied with unchanged initial and boundary conditions for $\vel$,
plus the initial condition $\lstr(t=0) = \ln\strs(t=0)$ for the log-conformation tensor.

\subsubsection{Reformulation of the free energy estimate}

A result similar to Theorem \ref{th:free-energy-classic} can be obtained for system \eqref{eq:oldroyd-b-log}, 
where the free energy is written in terms of $\lstr$ as:
\begin{equation} \label{def:free-energy-log}
F(\vel,e^\lstr) =\frac{\Re}{2}\intd|\vel|^2 +
\frac{\e}{2\Wi}\intd\tr(\elstr - \lstr - \I).
\end{equation}
The following theorem then holds :
\begin{theorem} \label{th:free-energy-log}
Let $(\vel,p,\lstr)$ be a smooth solution to system \eqref{eq:oldroyd-b-log}
supplied with homogeneous Dirichlet boundary conditions for $\vel$.
The free energy satisfies:
\begin{equation}\label{eq:free-equality-log}
\deriv{}{t}F(\vel,e^\lstr) +(1-\e)\intd|\gvel|^2 + \frac{\e}{2\Wi^2}\intd\tr(\elstr + \mlstr - 2\I)=0.
\end{equation}
{F}rom this estimate, we get that $F(\vel,e^\lstr)$ decreases exponentially fast in time
to zero.
\end{theorem}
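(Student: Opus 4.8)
The plan is to mirror the proof of Theorem~\ref{th:free-energy-classic}, replacing the three test operations used there (dotting the momentum equation with $\vel$, taking the trace of the $\strs$-equation, contracting it with $\strs^{-1}$) by their analogues adapted to the log-formulation~\eqref{eq:oldroyd-b-log}. The one structural difference is that we must never differentiate $\elstr$ directly, since $\pd{}{t}\elstr \ne \elstr\pd{\lstr}{t}$ when $\lstr$ and $\pd{\lstr}{t}$ do not commute; instead we will test the constitutive equation for $\lstr$ against the two fields $\elstr$ and $\I$, and convert contractions into time derivatives of traces via Jacobi's formula~\eqref{eq:trace} carried along the flow of $\vel$ (which is well defined since $\vel \in C^1([0,T),H^2(\D))^d \hookrightarrow C^1$ for $d \le 3$).

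First I would take the $L^2(\D)$ inner product of the momentum equation in~\eqref{eq:oldroyd-b-log} with $\vel$; integrating by parts the pressure, viscous and $\frac{\e}{\Wi}\div\elstr$ terms and using $\div\vel=0$, $\vel=0$ on $\partial\D$, this gives, exactly as in~\eqref{eq:energy1-sigma},
\[
\frac{\Re}{2}\deriv{}{t}\intd|\vel|^2 = -(1-\e)\intd|\gvel|^2 - \frac{\e}{\Wi}\intd\gvel:\elstr.
\]
Next I would rewrite the cross term using the decomposition $\gvel=\Om+\B+\N\strs^{-1}$ from Lemma~\ref{lemma:decomposition}: since $\Om,\N$ are antisymmetric and $\elstr=\strs$ is symmetric, $\Om:\elstr=\tr(\Om\elstr)=0$ and $(\N\strs^{-1}):\elstr=\tr(\N\strs^{-1}\strs)=\tr\N=0$, whence $\gvel:\elstr=\B:\elstr$.

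For the entropic part I would produce two identities. Contracting the $\lstr$-equation with $\elstr$: the term $(\Om\lstr-\lstr\Om):\elstr$ vanishes because $\lstr$ commutes with $\elstr$ (so $\tr(\Om\lstr\elstr)=\tr(\Om\elstr\lstr)$ by cyclicity), the term $2\B:\elstr$ equals $2\gvel:\elstr$ by the step above, and $(\mlstr-\I):\elstr=\tr\I-\tr\elstr=-\tr(\elstr-\I)$; by~\eqref{eq:trace} along the characteristics, $\brk{\pd{\lstr}{t}+(\vel\cdot\grad)\lstr}:\elstr=\brk{\pd{}{t}+\vel\cdot\grad}\tr\elstr$, and integrating over $\D$ while discarding $\intd(\vel\cdot\grad)\tr\elstr=\int_{\partial\D}(\tr\elstr)\,\vel\cdot\bn=0$ yields $\deriv{}{t}\intd\tr\elstr=2\intd\gvel:\elstr-\frac{1}{\Wi}\intd\tr(\elstr-\I)$. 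Taking instead the trace of the $\lstr$-equation (contracting with $\I$): $\tr(\Om\lstr-\lstr\Om)=0$ by cyclicity, $2\tr\B=2\tr\gvel=0$ by Lemma~\ref{lemma:decomposition} and $\div\vel=0$, and $\tr(\mlstr-\I)=\tr(\strs^{-1}-\I)$, so after integration and again killing the advection term, $\deriv{}{t}\intd\tr\lstr=\frac{1}{\Wi}\intd\tr(\mlstr-\I)$. Forming the kinetic identity $+\frac{\e}{2\Wi}\times$(first trace identity)$-\frac{\e}{2\Wi}\times$(second trace identity), the $\frac{\e}{\Wi}\intd\gvel:\elstr$ terms cancel and, using $\tr(\elstr-\I)+\tr(\mlstr-\I)=\tr(\elstr+\mlstr-2\I)$ and that $\tr\I$ is constant, I obtain exactly~\eqref{eq:free-equality-log}. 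Exponential decay then follows verbatim as in Theorem~\ref{th:free-energy-classic}: by~\eqref{eq:tr-plus_inv} applied to $\strs=\elstr$ the last term in~\eqref{eq:free-equality-log} is $\ge 0$, by~\eqref{eq:tr-ln} applied to $\strs^{-1}=\mlstr$ one has $\tr(\elstr-\lstr-\I)\le\tr(\elstr+\mlstr-2\I)$, and Poincar\'e plus Gronwall give the rate $\min\brk{\frac{2(1-\e)}{\Re\,C_P},\frac{1}{\Wi}}$.

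The main obstacle is not the length of the computation (each step is a few lines) but getting the algebraic cancellations right: the decomposition~\eqref{eq:velocity_decomposition} is engineered precisely so that the term $2\B$ in the log-equation, once contracted with $\elstr$, reproduces the cross term $2\gvel:\elstr$ coming from the kinetic step, and so that the rotation term $\Om\lstr-\lstr\Om$ is invisible to both test fields $\elstr$ and $\I$. Keeping track of these, and being disciplined about never differentiating $\elstr$ in a noncommuting way — which is why one contracts with $\elstr$ (not $\mlstr$) and uses~\eqref{eq:trace} (not~\eqref{eq:tracelog}) — is the crux; the rest is bookkeeping strictly parallel to the proof of Theorem~\ref{th:free-energy-classic}.
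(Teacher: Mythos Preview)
Your proposal is correct and follows essentially the same approach as the paper: you test the momentum equation with $\vel$, take the trace of the $\lstr$-equation, and contract the $\lstr$-equation with $\elstr$, using the identities $\gvel:\elstr=\B:\elstr$ and $(\Om\lstr-\lstr\Om):\elstr=0$ together with Jacobi's formula~\eqref{eq:trace}, then combine the three identities exactly as the paper does. The exponential decay argument via~\eqref{eq:tr-plus_inv},~\eqref{eq:tr-ln}, Poincar\'e and Gronwall is also identical.
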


\begin{proof}[Proof of Theorem \eqref{th:free-energy-log}]
The proof of this theorem mimicks the proof of Theorem
\ref{th:free-energy-classic}. We go over the steps of the proof, and point out the differences
with the previous case. Let $(\vel,p,\lstr)$ be a smooth solution to
\eqref{eq:oldroyd-b-log}.

{F}rom the inner product of the momentum conservation equation in \eqref{eq:oldroyd-b-log} with the velocity $\vel$,
we obtain:
\begin{equation}
\label{eq:energy1-log}
\frac{\Re}{2}\deriv{}{t}\intd |\vel|^2 = -(1-\e) \intd |\gvel|^2 -
\frac{\e}{\Wi} \intd \gvel : \elstr ,
\end{equation}
which is equivalent to \eqref{eq:energy1-sigma}. 
Taking the trace of the evolution equation for the conformation tensor, we get:
\begin{equation}
\label{eq:energy2-log}
\deriv{}{t} \intd \tr \lstr = \frac{1}{\Wi}\intd\tr(\mlstr -\I),
\end{equation}
which is equivalent to \eqref{eq:energy3-sigma}.
Contracting the evolution equation for $\lstr$ with $\elstr$ and using~\eqref{eq:trace} with $\lstr=\ln\strs$,
we rewrite the first term of this inner product:
\[
\brk{\pd{\lstr}{t}+\vel\cdot\grad\lstr}:\elstr =
\brk{\pd{}{t}+\vel\cdot\grad}\tr\elstr.
\]
Recall that the decomposition \eqref{eq:velocity_decomposition} of $\gvel$ allows to rewrite the second term:
\begin{equation}\label{eq:uB}
\gvel:\elstr = \Om:\elstr + \B:\elstr + (\N\mlstr):\elstr = \B:\elstr\,,
\end{equation}
where we have used the symmetry of $\elstr$ and the antisymmetry of $\Om$ and $\N$.
Then, notice that, since $\lstr$ and $\elstr$ commute, we have:
\begin{align}
(\Om\lstr-\lstr\Om):\elstr&=\tr(\Om\lstr\elstr) - \tr(\lstr\Om\elstr),\nonumber \\
&=\tr(\Om\lstr\elstr) -\tr(\Om\lstr\elstr), \nonumber \\
&=0\,,
\label{eq:omegapsi0}
\end{align}
we finally obtain an equation equivalent to \eqref{eq:energy2-sigma}:
\begin{equation} \label{eq:energy3-log}
\deriv{}{t} \intd \tr \elstr = 2\intd \gvel : \elstr
-\frac{1}{\Wi}\intd \tr(\elstr - \I) .
\end{equation}
It is noticeable that in this proof, we made no use of the positivity of $\strs=\elstr$,
in contrast to the proof of Theorem \ref{th:free-energy-classic}.

The combination \eqref{eq:energy1-log} $-\frac{\e}{2\Wi}\times$
\eqref{eq:energy2-log} $+\frac{\e}{2\Wi}\times$ \eqref{eq:energy3-log} gives \eqref{eq:free-equality-log}:
\begin{equation}
\deriv{}{t}\Brk{\frac\Re2\intd|\vel|^2
  +\frac{\e}{2\Wi}\intd\tr(\elstr-\lstr - \I)} 
+(1-\e)\intd|\gvel|^2 + \frac{\e}{2\Wi^2}\intd\tr(\elstr+\mlstr-2\I)=0.
\end{equation}
This is exactly equivalent to \eqref{eq:free-equality-classic}. As in
the proof of Theorem \ref{th:free-energy-classic}, we then obtain that $F(\vel,e^\lstr)$ decreases exponentially fast in time to zero.
\end{proof}



\section{Construction of numerical schemes with Scott-Vogelius elements for $(\velh,\ph)$}\label{sec:construction}

We would now like to build numerical integration schemes for both systems
of equations~\eqref{eq:oldroyd-b-sigma} and~\eqref{eq:oldroyd-b-log}
that respectively preserve the dissipation properties of Theorems \ref{th:free-energy-classic}
and \ref{th:free-energy-log} for discrete free energies similar to \eqref{def:free-energy-classic}
and \eqref{def:free-energy-log}. We first present discretizations 
that allow for a simple and complete exposition of our reasoning in order to derive discrete free energy estimates.
Possible extensions will be discussed in the Sections~\ref{sec:need} and~\ref{sec:P1}.

\subsection{Variational formulations of the problems}

To discretize~\eqref{eq:oldroyd-b-sigma} and~\eqref{eq:oldroyd-b-log} in space 
using a finite element method,
we first write variational formulations for~\eqref{eq:oldroyd-b-sigma} and~\eqref{eq:oldroyd-b-log} 
that are satisfied by smooth solutions of the previous systems. Smooth solutions $(\vel,p,\strs)$ and $(\vel,p,\lstr)$ 
to system~\eqref{eq:oldroyd-b-sigma} and~\eqref{eq:oldroyd-b-log}
respectively satisfy the variational formulations:
\begin{equation}\label{eq:form-sigma}
 \begin{split}
0 = \intd \Re\brk{\pd{\vel}{t} +
\vel\cdot\gvel}\cdot\bv +
(1-\e)\gvel:\grad\bv - p\div\bv \\
+\frac{\e}{\Wi}\strs:\grad\bv + q\div \vel \\
+ \brk{\pd{\strs}{t}+\vel\cdot\grad\strs}:\bphi - ((\gvel)\strs +
\strs(\gvel)^T):\bphi + \frac{1}{\Wi}(\strs-\I):\bphi,
\end{split}
\end{equation}
and
\begin{equation}\label{eq:form-log}
\begin{split}
0 = \intd \Re\brk{\pd{\vel}{t} +
\vel\cdot\gvel}\cdot\bv +
(1-\e)\gvel:\grad\bv - p\div\bv\\
+\frac{\e}{\Wi}\elstr:\grad\bv + q\div \vel \\
+ \brk{\pd{\lstr}{t}+\vel\cdot\grad\lstr}:\bphi - (\Om\lstr -
\lstr\Om):\bphi - 2\B:\bphi - \frac{1}{\Wi}(\mlstr-\I):\bphi,
\end{split}
\end{equation}
for all sufficiently regular test function $(\bv,q,\bphi)$.

In this variational framework, we recover 
the free energy estimates~\eqref{eq:free-equality-classic}
(respectively~\eqref{eq:free-equality-log}) 
using the test functions $\brk{\vel,p,\frac{\e}{2\Wi}(\I-\strs^{-1})}$
(respectively $\brk{\vel,p,\frac{\e}{2\Wi}(\elstr-\I)}$)
in~\eqref{eq:form-sigma} (respectively~\eqref{eq:form-log}).


\subsection{Numerical schemes with Scott-Vogelius finite elements for $(\velh,\ph)$}
\label{sec:discrete}

Using the Galerkin discretization method,
we now want to build variational numerical integration schemes
that are based on the variational formulations~\eqref{eq:form-sigma} and~\eqref{eq:form-log}
using finite-dimensional approximations of the solution/test spaces.
We will then show in the next Section~\ref{sec:P0}
that solutions to these schemes satisfy discrete free energy estimates
which are equivalent to those in Theorems~\ref{th:free-energy-classic} and~\ref{th:free-energy-log}.

First, the time interval $[0,T)$ is split into $N_T$ intervals $[t^n,t^{n+1})$ 
of constant size $\dt = \frac{T}{N_T}$, with $t^n=n\dt$ for $n = 0 \dots N_T$.
 For all $n = 0 \dots N_T-1$, we denote by $(\velh^{n},\ph^{n},\strh^{n})$ 
(resp.~$(\velh^{n},\ph^{n},\lstrh^{n})$),
the value at time $t_n$ of the discrete solutions $(\velh,\ph,\strh)$ 
(resp.~$(\velh,\ph,\lstrh)$) in finite element spaces.

In all the following sections, we will assume that the domain $\D$ is polyhedral.
We define a conformal mesh $\mathcal{T}_h$ built from a tesselation of the domain $\D$,
$$ \mathcal{T}_h = \mathop{\cup}_{k=1}^{N_K} K_k \,, $$
made of $N_K$ simplicial elements $K_k$ and $N_D$ nodes at the internal vertices.
We denote by $h_{K_k}$ the diameter of the element $K_k$ and assume that the mesh is uniformly regular, 
with maximal diameter $h \ge \max_{1\le k\le N_K} h_{K_k}$. 
For each element $K_k$ of the mesh $\mathcal{T}_h$, 
we denote by $\bn_{K_k}$ the outward unitary normal vector to element $K_k$, 
defined on its boundary $\partial K_k$.  
We also denote by $\{E_j|j=1,\dots,N_E\}$ the internal edges of the mesh $\mathcal{T}_h$
when $d=2$, or the faces of volume elements when $d=3$ 
(also termed as ``edges'' for the sake of simplicity in the following). 

For the velocity-pressure field $(\velh,\ph)$, 
we choose the mixed finite element space $(\Pt)^d \times \Pod$ of Scott-Vogelius \cite{scott-vogelius-85}, where:
\begin{itemize}
\item by $\velh\in(\Pt)^d$ we mean that $\velh$ is a vector field with entries over $\D$
that are continuous polynomials of maximal degree $2$,
\item and by $\ph\in\Pod$ we mean that $\ph$ is a scalar field with entries over ${\cal T}_h$
that are piecewise continuous polynomials of maximal degree $1$ (thus discontinuous over $\D$).
\end{itemize}
This choice is very convenient to establish the free-energy estimates at the discrete level. 
As mentioned earlier, other choices will be discussed in Section~\ref{sec:need}. 
For general meshes, this finite element does not satisfy the Babu\v{s}ka-Brezzi inf-sup condition. 
However, for meshes built using a particular process based on a first mesh of macro-elements, 
this mixed finite element space is known to satisfy the Babu\v{s}ka-Brezzi inf-sup condition
(this is detailed in \cite{arnold-qin-92} for instance). 
The interest of this finite element is that the velocity field is divergence-free:
\begin{equation} \label{eq:divu=0}
 \div \velh(\x) = 0,\, \forall\x\in{\D} ,
\end{equation}
because $\div \velh \in \Pod$ can be used as a test function for the
pressure field in the weak formulation of the incompressibility
constraint $\int_\D \div \velh q_h = 0$.

For the approximation of $\strh$ and $\lstrh$, we use \emph{discontinuous} finite elements to derive the free energy estimates. 
For simplicity, we first consider piecewise constant approximations of $\strh$ and $\lstrh$ 
in Sections~\ref{sec:construction} and~\ref{sec:P0}. 
In Section~\ref{sec:P1}, we will come back to this assumption 
and discuss the use of higher order finite element spaces for $\strh$ and $\lstrh$.
All along this work, we denote by $\strh \in (\Pz)^{\dd}$ the fact that 
the symmetric-tensor field $\strh$ is discretized using a $\dd$-dimensional so-called \emph{stress} field,
which stands for the entries in $\Pz$ of a symmetric $d \times d$-dimensional tensor field, 
thus enforcing the symmetry in the discretization.

The advection terms $\vel \cdot \grad \strs$ and $\vel \cdot \grad \lstr$ will be discretized
either through a \emph{characteristic} method in the spirit of~\cite{pironneau-82,baranger-machmoum-97,wapperom-keunings-legat-00},
or with the \emph{discontinuous Galerkin} (DG) method in the spirit of~\cite{hulsen-fattal-kupferman-05}. 
Notice already that the characteristic method requires the velocity field to be more regular 
 than the discontinuous Galerkin method in order to define the flow associated with the vector field~$\velh$.

For the discontinuous Galerkin method, we will need the following notation. 
Let $E_j$ be some internal edge in the mesh $\mathcal{T}_h$.
To each edge $E_j$, we associate a unitary orthogonal vector $\bn\equiv\bn_{E_j}$,
whose orientation will not matter in the following.
Then, for a given velocity field $\velh$ in $\D$ that is well defined on the edges, 
for any variable $\bphi$ in $\D$ and any interior point $\x$ to the edge $E_j$,
we respectively define the downstream and upstream values of $\bphi$ by:
\begin{equation}\label{eq:sign-convention}
\begin{aligned}
\bphi^+(\x) & = \lim_{\delta \rightarrow 0^+} \bphi(\x+\delta\,\velh(\x)), \\
\bphi^-(\x) & = \lim_{\delta \rightarrow 0^-}
\bphi(\x+\delta\,\velh(\x)).
\end{aligned}
\end{equation}
We denote by $ \jump{\bphi}(\x) = \bphi^+(\x) - \bphi^-(\x) $ the jump of $\bphi$ over the edge $E_j$
and by $\BRK{\bphi}(\x) = \frac{\bphi^+(\x) + \bphi^-(\x)}{2}$ the mean value over the edge.
Then, one can easily check the following formula for any function $\phi$:
\begin{equation}\label{eq:jump}
\sum_{E_j} \int_{E_j} |\velh\cdot\bn| \jump{\phi}
= -\sum_{K_k} \int_{\partial K_k} \brk{\velh \cdot \bn_{K_k}} \phi.
\end{equation}

Let us now present in the next section the discrete variational
formulations we will consider.

\begin{remark}
In what follows, we do not consider the possible instabilities occurring when advection dominates diffusion 
in the Navier-Stokes equation for the velocity field $\velh$. Indeed, in practice, one typically considers small Reynolds number flows for polymeric fluids, so that we are in a regime where such instabilities are not observed.

Moreover, we also assume that $0\leq\e< 1$ so that there is no problem of compatibilities between the discretization space for the velocity and for the stress (see~\cite{bonvin-picasso-stenberg-01} for more details).
\end{remark}

\subsection{Numerical schemes with $\strh$ piecewise constant}
\label{sec:P0-strh}

Variational formulations of the discrete problem write, for all $n = 0 \dots N_T-1$, as follows:

\paragraph{With the {\em characteristic} method:}
For a given $(\velhn,\ph^{n},\strhn)$, 
find $(\velh^{n+1},\ph^{n+1},\strh^{n+1}) \in (\Pt)^d\times\Pod\times(\Pz)^\dd$ such that,
for any test function $(\bv,q,\bphi) \in (\Pt)^d\times\Pod\times(\Pz)^\dd$,
\begin{equation} \label{eq:P0-charac}
\begin{split}
0 = \intd & \Re\brk{\frac{\velhnp-\velhn}{\dt} + \velhn\cdot\gvelhnp}\cdot\bv 
\\
& - \phnp\div\bv + q\div \velhnp+ (1-\e)\gvelhnp:\grad\bv
  +\frac{\e}{\Wi}\strhnp:\grad\bv
\\
&+ \brk{  \frac{\strhnp-\strhn \circ X^n(t^n) }{\dt} }:\bphi
- \brk{ (\gvelhnp)\strhnp +\strhnp(\gvelhnp)^T }:\bphi
\\
&+ \frac{1}{\Wi}(\strhnp-\I):\bphi.
\end{split}
\end{equation}
This problem is supplied with an initial condition $(\velh^0,\ph^0,\strh^0) \in (\Pt)^d\times\Pod\times(\Pz)^\dd$.
\\
The function $X^n(t): x \in \D \mapsto X^n(t,x) \in \D$ is the
``backward'' flow associated with the velocity field $\velh^n$ and
satisfies, for all $x \in \D$,
\begin{equation} \label{eq:flow-P0-charac}
\left\{
\begin{array}{l}
 \deriv{}{t} X^n(t,x) = \velh^n ( X^n(t,x) ), \quad \forall t \in [t^n,t^{n+1}],\\
 X^n(t^{n+1},x) = x. 
\end{array}
\right.
\end{equation}

\paragraph{With the {\em discontinuous Galerkin} method:}
For a given $(\velhn,\ph^{n},\strhn)$, 
find $(\velh^{n+1},\ph^{n+1},\strh^{n+1}) \in (\Pt)^d\times\Pod\times(\Pz)^\dd$ 
such that, for any test function $(\bv,q,\bphi) \in (\Pt)^d\times\Pod\times(\Pz)^\dd$,
\begin{equation} \label{eq:P0-DG}
\begin{split}
0 =  \sum_{k=1}^{N_K} \int_{K_k} & \Re\brk{\frac{\velhnp-\velhn}{\dt} + \velhn\cdot\gvelhnp}\cdot\bv 
\\
& - \phnp\div\bv + q\div \velhnp+ (1-\e)\gvelhnp:\grad\bv
  +\frac{\e}{\Wi}\strhnp:\grad\bv
\\
&+ \brk{  \frac{\strhnp-\strhn}{\dt} }:\bphi
- \brk{ (\gvelhnp)\strhnp +\strhnp(\gvelhnp)^T }:\bphi
\\
&+ \frac{1}{\Wi}(\strhnp-\I):\bphi
\\
+ \sum_{j=1}^{N_E} &\int_{E_j}  \Abs{\velhn\cdot\bn} \jump{\strhnp} : \bphi^+ .
\end{split}
\end{equation}
Since $\strh \in (\Pz)^\dd$ is discontinuous, 
we have discretized the advection term for $\strh$ with a sum of jumps similar to the usual upwind technique, 
where $\bphi^+=\left(\frac12\jump{\bphi}+\BRK{\bphi}\right)$ (see~\cite{ern-guermond-04,hulsen-fattal-kupferman-05}).

\begin{remark}
In all the following, we assume that, when using the characteristic method:
\begin{itemize}
\item the characteristics are exactly integrated,
\item and the integrals involving the backward flow $X^n$ are exactly computed. 
\end{itemize}
We are aware of the fact that these assumptions are strong, 
and that numerical instabilities may be induced by bad integration schemes. 
Hence, considering the lack for an analysis of those integration schemes for the characteristics in the present study,
our analysis of discontinuous Galerkin discretizations of the advection terms 
may seem closer to the real implementation than that of the discretizations using the characteristic method.
\end{remark}

\begin{remark}
Notice that the numerical schemes we propose are nonlinear 
due to the implicit terms corresponding to the discretization of the upper-convective derivative 
$(\gvel)\strs+\strs(\gvel)^T $ (resp. $\Omega \lstr-\lstr\Omega$). 
In practice, this nonlinear system can be solved by fixed point procedures, 
either using the values at the previous time step as an initial guess, 
or using a predictor obtained by solving another scheme where the nonlinear terms are explicited.
\end{remark}

\subsection{Numerical schemes with $\lstrh$ piecewise constant}
\label{sec:log-P0}

We now show how to discretize the variational log-formulation similarly as above.
For this, we will need the following elementwise decomposition of the velocity gradient
(see Lemma~\ref{lemma:decomposition} above):
\begin{equation}\label{eq:discrete-decomposition}
\gvelhnp = \Omh^{n+1} + \B_h^{n+1} + \Nh^{n+1}\emstrhnp.
\end{equation}
Moreover, for the decomposition \eqref{eq:discrete-decomposition} with $\vel \in (\Pt)^d$, 
we will need the following Lemma~\ref{lemma:discrete-decomposition} for $k=1$, which is proved in Appendix~\ref{app:decomposition}:
\begin{lemma}\label{lemma:discrete-decomposition}
Let $\gvelhnp \in (\mathbb{P}_{k,disc})^{d\times d}$ for some $k \in \mathbb{N}$.
Then, for any symmetric positive definite matrix $\elstrhnp \in (\Pz)^\dd$,
there exist two antisymmetric matrices $\Omh^{n+1},\Nh^{n+1} \in (\mathbb{P}_{k,disc})^{\frac{d(d-1)}{2}}$ 
and a symmetric matrix $\B_h^{n+1} \in (\mathbb{P}_{k,disc})^\dd$ that commutes with $\elstrhnp$,
such that the matrix-valued function $\gvelhnp$ can be decomposed pointwise as:
$ \gvelhnp = \Omh^{n+1} + \B_h^{n+1} + \Nh^{n+1}\emstrhnp. $
\end{lemma}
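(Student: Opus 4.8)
The plan is to reduce the statement to the purely algebraic, pointwise decomposition of Lemma~\ref{lemma:decomposition}, and then to check that the \emph{piecewise-constant} nature of $\elstrhnp$ is exactly what prevents the change to the eigenbasis from destroying polynomial regularity. Since the discontinuous finite element spaces $(\mathbb{P}_{k,disc})^{d\times d}$, $(\mathbb{P}_{k,disc})^{\dd}$, $(\mathbb{P}_{k,disc})^{\frac{d(d-1)}{2}}$ impose no continuity across the edges of $\mathcal{T}_h$, I would argue element by element: fix a simplex $K$ of the mesh, construct $\Omh^{n+1}$, $\B_h^{n+1}$, $\Nh^{n+1}$ on $K$ alone, and paste the pieces together at the end.

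On such a $K$, the function $\elstrhnp$ equals a \emph{constant} symmetric positive definite matrix, so I would diagonalise it once and for all on $K$ as $\elstrhnp|_K = R^T\Lambda R$, with $R\in\R^{d\times d}$ a constant orthogonal matrix and $\Lambda=\operatorname{diag}(\lambda_1,\dots,\lambda_d)$ a constant positive diagonal matrix; then $\emstrhnp|_K=R^T\Lambda^{-1}R$. Setting $\tilde M(\x)=R\,\gvelhnp(\x)\,R^T$ for $\x\in K$, each entry of $\tilde M$ is a fixed constant-coefficient linear combination of the entries of $\gvelhnp$, hence still lies in $\mathbb{P}_k(K)$. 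The next step is to redo, in this fixed eigenbasis, the construction behind Lemma~\ref{lemma:decomposition} (see Appendix~\ref{app:decomposition}): writing $\tilde M=\tilde\Omega+\tilde B+\tilde N\Lambda^{-1}$ componentwise, the diagonal gives $\tilde B_{ii}=\tilde M_{ii}$; for an off-diagonal pair $(i,j)$ with $\lambda_i\neq\lambda_j$ one solves the $2\times2$ system formed by the $(i,j)$ and $(j,i)$ entries, getting $\tilde N_{ij}=\frac{\lambda_i\lambda_j}{\lambda_i-\lambda_j}(\tilde M_{ij}+\tilde M_{ji})$, $\tilde\Omega_{ij}=\tilde M_{ij}-\lambda_j^{-1}\tilde N_{ij}$, $\tilde B_{ij}=0$; and for a pair with $\lambda_i=\lambda_j$ one may take $\tilde N_{ij}=0$, $\tilde\Omega_{ij}=\tfrac12(\tilde M_{ij}-\tilde M_{ji})$, $\tilde B_{ij}=\tfrac12(\tilde M_{ij}+\tilde M_{ji})$. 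Because $\Lambda$ — and hence the partition of the indices into coinciding/distinct eigenvalues, together with all the coefficients above — is constant on $K$, every $\tilde\Omega_{ij},\tilde B_{ij},\tilde N_{ij}$ is again a polynomial of degree at most $k$ on $K$; moreover $\tilde\Omega,\tilde N$ are antisymmetric, and $\tilde B$ is symmetric and commutes with $\Lambda$ (it is block-diagonal along the eigenspaces of $\Lambda$).

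I would then define $\Omh^{n+1}|_K=R^T\tilde\Omega R$, $\B_h^{n+1}|_K=R^T\tilde B R$, $\Nh^{n+1}|_K=R^T\tilde N R$: constancy of $R$ keeps the entries in $\mathbb{P}_k(K)$; antisymmetry and symmetry are preserved under orthogonal conjugation; $\B_h^{n+1}$ commutes with $\elstrhnp|_K=R^T\Lambda R$ since $\tilde B$ commutes with $\Lambda$; and conjugating $\tilde M=\tilde\Omega+\tilde B+\tilde N\Lambda^{-1}$ back yields $\gvelhnp=\Omh^{n+1}+\B_h^{n+1}+\Nh^{n+1}\emstrhnp$ on $K$. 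Performing this on each element and invoking the absence of interelement continuity constraints gives the global decomposition, with $\Omh^{n+1},\Nh^{n+1}\in(\mathbb{P}_{k,disc})^{\frac{d(d-1)}{2}}$ and $\B_h^{n+1}\in(\mathbb{P}_{k,disc})^{\dd}$.

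I expect the only delicate point to be the degenerate case $\lambda_i=\lambda_j$, where the pointwise decomposition of Lemma~\ref{lemma:decomposition} is not unique: one must ensure that a \emph{single} algebraic choice works uniformly for all $\x\in K$, which is precisely guaranteed by the fact that $\elstrhnp$, and therefore its spectrum, is constant on each element. (If $\elstrhnp$ were merely piecewise polynomial of positive degree, the eigenvectors would in general fail to be polynomial and this argument would break down; this is exactly why the piecewise-constant approximation of the conformation tensor is used here.)
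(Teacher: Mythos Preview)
Your proof is correct and follows essentially the same route as the paper's Appendix~\ref{app:decomposition}: on each element one diagonalises the constant matrix $\elstrhnp$, constructs the decomposition componentwise in the eigenbasis, and conjugates back, the constancy of the change of basis being exactly what preserves the polynomial degree. Your explicit treatment of the degenerate case $\lambda_i=\lambda_j$ is in fact more careful than the paper's, which tacitly assumes distinct eigenvalues when it inverts the map $f_{\strs}:N\mapsto\tfrac12(N\strs^{-1}-\strs^{-1}N)$.
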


Variational formulations of the discrete problem write, for all $n = 0 \dots N_T-1$, as follows:
\paragraph{With the \emph{characteristic} method:}
For a given $(\velhn,\ph^{n},\lstrhn)$, 
find $(\velh^{n+1},\ph^{n+1},\lstrh^{n+1}) \in (\Pt)^d\times\Pod\times(\Pz)^\dd$ such that,
 for any test function $(\bv,q,\bphi) \in (\Pt)^d\times\Pod\times(\Pz)^\dd$,
\begin{equation} \label{eq:log-P0-charac}
\begin{split}
0 = \intd & \Re\brk{\frac{\velhnp-\velhn}{\dt} 
+ \velhn\cdot\gvelhnp}\cdot\bv 
\\
& - \phnp\div\bv + q\div \velhnp+ (1-\e)\gvelhnp:\grad\bv
  +\frac{\e}{\Wi}\elstrhnp:\grad\bv
\\
&+ \brk{  \frac{\lstrhnp-\lstrhn \circ X^n(t^n) }{\dt} }:\bphi
- ( \Omh^{n+1}\lstrhnp - \lstrhnp\Omh^{n+1} ):\bphi
-2\B_h^{n+1}:\bphi
\\
&- \frac{1}{\Wi}(\emstrhnp-\I):\bphi,
\end{split}
\end{equation}
where the initial condition $(\velh^0,\ph^0,\lstrh^0) \in (\Pt)^d\times\Pod\times(\Pz)^\dd$ is given 
and where $X^n(t)$ is again defined by~\eqref{eq:flow-P0-charac}.

\paragraph{With the \emph{discontinuous Galerkin} method:} 
For a given $(\velhn,\ph^{n},\lstrhn)$, 
find $(\velh^{n+1},\ph^{n+1},\lstrh^{n+1}) \in (\Pt)^d\times\Pod\times(\Pz)^\dd$ such that,
for any test function $(\bv,q,\bphi) \in (\Pt)^d\times\Pod\times(\Pz)^\dd$,
\begin{equation} \label{eq:log-P0-DG}
\begin{split}
0 =  \sum_{k=1}^{N_K} \int_{K_k} &\Re\brk{\frac{\velhnp-\velhn}{\dt} 
+ \velhn\cdot\gvelhnp}\cdot\bv 
\\
& - \phnp\div\bv + q\div \velhnp+ (1-\e)\gvelhnp:\grad\bv
  +\frac{\e}{\Wi}e^{\lstrhnp}:\grad\bv
\\
&+ \brk{  \frac{\lstrhnp-\lstrhn}{\dt} }:\bphi
- ( \Omh^{n+1}\lstrhnp - \lstrhnp\Omh^{n+1} ):\bphi
-2\B_h^{n+1}:\bphi
\\
&- \frac{1}{\Wi}(\emstrhnp-\I):\bphi
\\
 + \sum_{j=1}^{N_E} &\int_{E_j} \Abs{\velhn\cdot\bn} \jump{\lstrhnp} :\bphi^+.
\end{split}
\end{equation}

\subsection{Local existence and uniqueness of the discrete solutions}

Before we show how to recover free energy estimates at the discrete level, let us now deal with 
the local-in-time existence and uniqueness of solutions to the discrete problems presented above.

First, since the mixed finite element space of Scott-Vogelius chosen in the systems above
for the velocity-pressure field satisfies the Babu\v{s}ka-Brezzi inf-sup condition, 
notice that the system~\eqref{eq:P0-charac} is equivalent to the following for all $n = 0 \dots N_T-1$:
For a given $(\velhn,\strhn)$, find $(\velhnp,\strhnp) \in (\Pt)^d_{\rm \div =0} \times (\Pz)^\dd$ such that,
for any test function $(\bv,\bphi) \in (\Pt)^d_{\rm \div =0} \times (\Pz)^\dd$,
\begin{equation} \label{eq:P0-charac-proj}
\begin{split}
0 = \intd & \Re\brk{\frac{\velhnp-\velhn}{\dt} 
+ \velhn\cdot\gvelhnp}\cdot\bv + (1-\e)\gvelhnp:\grad\bv
  +\frac{\e}{\Wi}\strhnp:\grad\bv
\\
&+ \brk{  \frac{\strhnp-\strhn \circ X^n(t^n) }{\dt} }:\bphi
- \brk{ (\gvelhnp)\strhnp +\strhnp(\gvelhnp)^T }:\bphi
\\
&+ \frac{1}{\Wi}(\strhnp-\I):\bphi,
\end{split}
\end{equation}
where the flow $X^n(t)$ is defined by~\eqref{eq:flow-P0-charac} and where we have used the following notation:
\begin{equation}\label{eq:P1-div=0}
(\Pt)^d_{\rm \div =0}=\left\{\bv \in (\Pt)^d, \,  \intd q \div \bv = 0, \ \forall q \in \Pod \right\}.
\end{equation}

Notice that it is also straightforward to rewrite the systems~\eqref{eq:P0-DG},~\eqref{eq:log-P0-charac}
and~\eqref{eq:log-P0-DG} using $\velh\in(\Pt)^d_{\rm \div =0}$ instead of $(\velh,\ph)\in(\Pt)^d\times\Pod$.
For instance, the system~\eqref{eq:log-P0-charac} is equivalent to: 
For a given $(\velhn,\lstrhn)$, find $(\velhnp,\lstrhnp)\in (\Pt)^d_{\div=0}\times(\Pz)^\dd$ such that,
for all $(\bv,\bphi) \in (\Pt)^d_{\div=0}\times(\Pz)^\dd$,
\begin{equation} \label{eq:log-P0-charac-proj}
\begin{split}
0 = \intd & \Re\brk{\frac{\velhnp-\velhn}{\dt} + \velhn\cdot\gvelhnp}\cdot\bv 
+ (1-\e)\gvelhnp:\grad\bv +\frac{\e}{\Wi}\elstrhnp:\grad\bv
\\
& + \brk{  \frac{\lstrhnp-\lstrhn \circ X^n(t^n) }{\dt} }:\bphi
- ( \Omh^{n+1}\lstrhnp - \lstrhnp\Omh^{n+1} ):\bphi
-2\B_h^{n+1}:\bphi
\\
& - \frac{1}{\Wi}(\emstrhnp-\I):\bphi.
\end{split}
\end{equation}

Then, we have the:
\begin{proposition} \label{prop:existence}
For any couple $(\velhn,\strhn)$ with $\strhn$ symmetric positive definite, 
there exists $c_0 \equiv c_0\brk{\velhn,\strhn} > 0$ such that, for all $0 \le \dt < c_0$, 
there exists a unique solution $(\velh^{n+1},\strh^{n+1})$ to the system~\eqref{eq:P0-charac} (resp.~\eqref{eq:P0-DG})
with $\strh^{n+1}$ symmetric positive definite.
\end{proposition}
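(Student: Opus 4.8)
The goal is local-in-time existence and uniqueness of the discrete solution $(\velhnp,\strhnp)$ with $\strhnp$ symmetric positive definite, for a small enough time step. I would work with the reduced ("div=0") formulation~\eqref{eq:P0-charac-proj}, which eliminates the pressure thanks to the inf-sup condition, so that the unknowns are just $(\velhnp,\strhnp) \in (\Pt)^d_{\div=0}\times(\Pz)^\dd$, a finite-dimensional space. The first step is to \emph{reduce to a fixed-point problem on $\velhnp$ alone}: given a candidate velocity $\vel^\star \in (\Pt)^d_{\div=0}$, look at the stress equation in~\eqref{eq:P0-charac-proj} restricted to each element $K_k$. On $K_k$ the functions $\strhnp$, $\gvelhnp$ are constant (recall $\velhnp \in (\Pt)^d$ so $\gvelhnp$ is piecewise linear --- wait, in fact for the purposes of this first scheme one takes the relevant quantities elementwise, and the algebra that matters is that the stress equation becomes, elementwise, a \emph{linear matrix equation} (a Sylvester/Lyapunov-type equation) for $\strhnp$ in terms of the known data $\strhn\circ X^n(t^n)$ and the matrix $\gvel^\star$): schematically
\[
\brk{\frac{1}{\dt}+\frac{1}{\Wi}}\strhnp - (\gvel^\star)\strhnp - \strhnp(\gvel^\star)^T = \frac{1}{\Wi}\I + \frac{1}{\dt}\,\strhn\circ X^n(t^n).
\]
For $\dt$ small the operator $\str \mapsto (\frac{1}{\dt}+\frac{1}{\Wi})\str - (\gvel^\star)\str - \str(\gvel^\star)^T$ is invertible (its spectrum is $\frac1\dt+\frac1\Wi$ minus $O(\|\gvel^\star\|)$ perturbations, hence bounded away from zero once $\dt < c(\|\gvel^\star\|)$), so this defines $\strhnp = \mathcal{S}_{\dt}(\vel^\star)$, a smooth map of $\vel^\star$. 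Moreover, when $\dt \to 0$ the right-hand side is dominated by $\frac1\dt\strhn\circ X^n(t^n)$, which is symmetric positive definite (Lemma~\ref{lem:spd} gives $\strhn$ s.p.d., and composing with the flow preserves this pointwise); since the solution of the Lyapunov equation depends continuously on the data and $\mathcal{S}_0$ would return $\strhn\circ X^n(t^n)$, one gets $\strhnp$ s.p.d.\ for $\dt$ small --- this uses that the s.p.d.\ cone is open.

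Next, I would substitute $\strhnp = \mathcal{S}_{\dt}(\vel^\star)$ back into the momentum equation of~\eqref{eq:P0-charac-proj}, obtaining a nonlinear equation $\mathcal{F}_{\dt}(\velhnp) = 0$ on the finite-dimensional space $(\Pt)^d_{\div=0}$. The cleanest route is a \emph{topological degree / Brouwer fixed-point} argument: rewrite the equation as $\velhnp = \mathcal{G}_{\dt}(\velhnp)$ where $\mathcal{G}_{\dt}$ is the map that solves the (linear, coercive for $\dt$ small because of the $\frac{\Re}{\dt}\intd|\vel|^2$ and $(1-\e)\intd|\gvel|^2$ terms) Stokes-type problem with the nonlinear terms --- the convection $\velhn\cdot\gvelhnp$ is actually linear in $\velhnp$, and the only genuinely nonlinear piece is $\frac{\e}{\Wi}\strhnp:\grad\bv = \frac{\e}{\Wi}\mathcal{S}_{\dt}(\velhnp):\grad\bv$ --- frozen at $\vel^\star$. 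One shows $\mathcal{G}_{\dt}$ maps a large ball into itself (an a priori estimate: test with $\bv=\velhnp$, use that $\|\mathcal{S}_{\dt}(\vel^\star)\| \le C_1 + \dt\,C_2(\|\vel^\star\|)\cdot(\text{small})$, absorb), and is continuous, hence has a fixed point by Brouwer. For \emph{uniqueness}, since everything is smooth in $\velhnp$ and, for $\dt$ small, the dominant term $\frac{\Re}{\dt}\I$ in the linearization makes the Jacobian of $\mathcal{F}_{\dt}$ invertible (the other contributions, including the derivative of $\mathcal{S}_{\dt}$, are $O(1)$ or smaller relative to $\frac1\dt$), a standard argument --- or simply a Lipschitz/contraction estimate: if $\velhnp_1,\velhnp_2$ are two solutions, subtract, test with the difference, and for $\dt$ small the $\frac{\Re}{\dt}$ term dominates the Lipschitz constants of all the other terms, forcing $\velhnp_1=\velhnp_2$.

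\textbf{The DG case} is handled identically: the edge term $\sum_{E_j}\int_{E_j}|\velhn\cdot\bn|\jump{\strhnp}:\bphi^+$ couples neighboring elements, so the stress equation is no longer elementwise decoupled, but it is still a \emph{linear} system for $\strhnp$ with a matrix of the form $\frac1\dt M + (\text{bounded in }\dt)$; for $\dt$ small this matrix is invertible and the solution tends to $\strhn$ componentwise as $\dt\to0$, which is s.p.d., so again $\strhnp$ is s.p.d.\ for $\dt$ small. The rest of the argument is verbatim.

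\textbf{Main obstacle.} The delicate point is not the existence per se (finite dimension plus Brouwer) but controlling the \emph{positive-definiteness of $\strhnp$} uniformly as one shrinks $\dt$, \emph{while simultaneously} the constant $c_0$ is allowed to depend on $(\velhn,\strhn)$ --- one must make sure the smallness threshold on $\dt$ coming from "the Lyapunov operator is invertible" and from "its solution stays in the open s.p.d.\ cone" can be chosen depending only on the data $(\velhn,\strhn)$ and not circularly on the unknown $\velhnp$. This is resolved by first establishing a \emph{uniform a priori bound} $\|\velhnp\| \le R(\velhn,\strhn)$ valid for all $\dt$ below some first threshold (from the energy test $\bv=\velhnp$), and only \emph{then} shrinking $\dt$ further so that, for all $\vel^\star$ in the ball of radius $R$, the map $\mathcal{S}_{\dt}$ is well-defined and lands in the s.p.d.\ cone --- the ball of radius $R$ being compact, such a uniform $\dt$-threshold exists. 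Feeding this back closes the fixed-point argument within the s.p.d.\ cone and yields the stated $c_0\brk{\velhn,\strhn}$.
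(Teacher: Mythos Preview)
Your approach is workable but substantially more elaborate than the paper's. The paper does not decouple stress from velocity: it writes the full coupled system~\eqref{eq:P0-charac-proj} as $Q(\dt, Y) = 0$, where $Y$ collects all nodal and elementwise values of $(\velhnp, \strhnp)$, observes that $Q(0, Y^n) = 0$ and $\nabla_Y Q(0, Y^n) = I$, and applies the implicit function theorem once. This yields a unique smooth branch $Y = R(\dt)$ of solutions for $\dt \in [0, c_0)$, and since $Y^n$ lies in the open set where the stress components are symmetric positive definite, the branch stays there for $\dt$ small. No Lyapunov equations, no Brouwer fixed point, no separate uniqueness argument. Your route --- solve the elementwise Lyapunov equation for $\strhnp = \mathcal{S}_{\dt}(\vel^\star)$, then run Brouwer on the velocity, then a contraction argument for uniqueness --- also reaches the conclusion and has the merit of making the structure of the stress update explicit. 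One caution, though: the energy test $\bv = \velhnp$ does \emph{not} by itself give the a priori bound $\|\velhnp\| \le R(\velhn,\strhn)$ you claim, because the coupling term $\frac{\e}{\Wi}\intd \strhnp : \gvelhnp$ has no sign and $\strhnp$ itself depends on $\velhnp$. The clean fix is to reverse your order of choices: pick $R$ first (large, depending only on the data), then take $\dt$ small so that $\mathcal{S}_{\dt}$ is well-defined and uniformly close to the transported $\strhn$ on all of the ball of radius $R$, and only then verify that the fixed-point map sends that ball into itself. The paper's one-shot implicit function theorem sidesteps this bookkeeping entirely.
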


\begin{proof}[Proof of Proposition \ref{prop:existence}]
The proofs for systems~\eqref{eq:P0-charac} and~\eqref{eq:P0-DG} are fully similar,
so we will proceed with the proof for system~\eqref{eq:P0-charac} only,
using its rewriting as system~\eqref{eq:P0-charac-proj}.

For a given mesh ${\cal T}_h$, let us denote by $Y^{n+1} \in \R^{2N_D+3N_K}$ the vector whose entries are respectively
the nodal and elementwise values of $(\velh^{n+1},\strh^{n+1})$, solution to the system~\eqref{eq:P0-charac-proj}.
The system of equations~\eqref{eq:P0-charac-proj} rewrites in terms of the vector $Y^{n+1} \in \R^{2N_D+3N_K}$ as:
for a given $Y^n$ and $\dt$, find a zero $Y^{n+1}$ of the application $Q$ defined by
\begin{equation}\label{eq:Q}
Q(\dt,Y^{n+1}) = \dt A(Y^{n+1})Y^{n+1} + \dt B(Y^n)Y^{n+1} + Y^{n+1} -
C(Y^n,\dt) \,,
\end{equation}
where $A$ and $B$ are linear continuous matrix-valued functions in $\R^{(2N_D+3N_K) \times (2N_D+3N_K)}$,
and where $C$ is a vector-valued function in $\R^{2N_D+3N_K}$ 
(notice that the dependence of the function $C$ on $\dt$ is only related to
the computation of the backward flow during a time step $\dt$,
so that $C(Y^n,0)=Y^n$, and with the DG method it simplifies as $C(Y^n,\dt)=Y^n$).
The functions $A$, $B$ and $C$ also implicitly depend on ${\cal T}_h$,
as well as on the parameters $\Re,\Wi,\e$.

Now, $Q(\dt,Y)$ is continuously differentiable with respect to $(\dt,Y)$
and we have, with $I$ the identity matrix in $\R^{(2N_D+3N_K) \times
  (2N_D+3N_K)}$:
\begin{equation}\label{eq:grad_Q}
\nabla_Y Q(\dt,Y) = I + \dt\big( B(Y^n) + A(Y) + (\nabla_Y A)Y \big).
\end{equation}
Then, for given vectors $Y^n$ and $Y$, the matrix $ \nabla_Y Q(\dt,Y) $ is invertible
for all $0 \le \dt \le \Norm{ B(Y^n) + A(Y) + (\nabla_Y A)Y }^{-1}$
(with convention $\Norm{ B(Y^n) + A(Y) + (\nabla_Y A)Y }^{-1}=\infty$ if $B(Y^n) + A(Y) + (\nabla_Y A)Y = 0$),
and then defines an isomorphism in $\R^{2N_D+3N_K}$.

Let us denote by $S_+^*$ the subset of $\R^{2N_D+3N_K}$ that only contains vectors corresponding to
elementwise values of \emph{positive definite} matrix-valued functions $\strh$ in $\D$.
Since $\mathcal{S}_+^*(\R^{d \times d})$ is an open (convex) domain of $\R^{d \times d}$,
$S_+^*$ is clearly an open (convex) domain of $\R^{2N_D+3N_K}$.

Since $Q(0,Y^n)=0$ and $\nabla_Y Q(0,Y^n)$ is invertible, 
in virtue of the implicit function theorem, there exist a neighborhood $[0,c_0)\times V(Y^n)$ of $(0,Y^n)$ in $\R_+ \cap S_+^*$ and a continuously differentiable function $R:[0,c_0)\rightarrow V(Y^n)$,
such that, for all $0\le\dt<c_0$:
$$ Y = R(\dt) \Longleftrightarrow Q(\dt,Y) = 0. $$
For a given time step $\dt\in[0,c_0)$ and a given symmetric positive definite tensor field $\strhn$, 
$ R(\dt) \in V(Y^n)$ is the vector of values 
for a symmetric positive definite solution $\strhnp$ to the system~\eqref{eq:P0-charac-proj}.
Notice that, up to this point, $c_0 = c_0(Y^n)$ is function of $Y^n$, as well as $\Re,\Wi,\e$ and ${\cal T}_h$.
\end{proof}

For solutions $(\velhn,\strhn)$ to the systems~\eqref{eq:log-P0-charac}
and~\eqref{eq:log-P0-DG}, we similarly have:
\begin{proposition} \label{prop:log-existence}
For any couple $(\velhn,\lstrhn)$, 
there exists a constant $c_0 \equiv c_0(\velhn,\lstrhn) > 0$ such that, for all $0 \le \dt < c_0$, 
there exists a unique solution $(\velhnp,\lstrhnp)$ to the system~\eqref{eq:log-P0-charac} (resp.~\eqref{eq:log-P0-DG}).
\end{proposition}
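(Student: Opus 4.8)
The plan is to follow closely the proof of Proposition~\ref{prop:existence}, the only new ingredients being the nonlinear dependence of the schemes~\eqref{eq:log-P0-charac} and~\eqref{eq:log-P0-DG} on the unknown $\lstrhnp$ through the matrix exponential $\elstrhnp$ and through the decomposition~\eqref{eq:discrete-decomposition}. As there, I first use the Babu\v{s}ka--Brezzi inf-sup property of the Scott--Vogelius element to reduce~\eqref{eq:log-P0-charac} (resp.~\eqref{eq:log-P0-DG}) to its projected form~\eqref{eq:log-P0-charac-proj} with $\velhnp \in (\Pt)^d_{\div=0}$, and collect the nodal and elementwise values of $(\velhnp,\lstrhnp)$ into a single vector $Y^{n+1} \in \R^{2N_D+3N_K}$. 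Multiplying~\eqref{eq:log-P0-charac-proj} by $\dt$, solving the scheme amounts to finding a zero of a map of the same structure as~\eqref{eq:Q},
\[
Q(\dt,Y^{n+1}) = Y^{n+1} - C(Y^n,\dt) + \dt\,G(\dt,Y^{n+1}),
\]
where $G$ gathers the viscous term, the elastic coupling $\frac{\e}{\Wi}\elstrhnp:\grad\bv$, the terms $-(\Omh^{n+1}\lstrhnp - \lstrhnp\Omh^{n+1}) - 2\B_h^{n+1}$ and $-\frac{1}{\Wi}(\emstrhnp-\I)$, and where $C$ encodes the explicit right-hand side, with $C(Y^n,0)=Y^n$ (and $C(Y^n,\dt)=Y^n$ for the DG scheme, while for the characteristic method $C$ depends smoothly on $\dt$ through the flow~\eqref{eq:flow-P0-charac} integrated with the fixed field $\velhn$), exactly as in Proposition~\ref{prop:existence}.

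First I would check $Q(0,Y^n)=0$: at $\dt=0$ the map reduces to $Y^{n+1}-Y^n$. The only point to note here is that, $\lstrhn$ being a symmetric matrix field, $\elstrhn$, $\emstrhn$ and a choice of the decomposition~\eqref{eq:discrete-decomposition} at $\lstrhn$ are all well defined, so that $G(0,Y^n)$, hence $Q(0,Y^n)$, is meaningful. Next I would show that $Q$ is continuously differentiable in a neighborhood of $(0,Y^n)$: the map $\str \mapsto e^{\str}$ is real-analytic on symmetric matrices, hence $C^\infty$, so $Y \mapsto (\elstrhnp,\emstrhnp)$ is smooth; and the Fattal--Kupferman decomposition of Lemma~\ref{lemma:discrete-decomposition}, i.e. the map sending a symmetric positive definite matrix $\elstrhnp$ to the linear operator $\gvelhnp \mapsto (\Omh^{n+1},\B_h^{n+1},\Nh^{n+1})$, is $C^1$ near $\elstrhn$ (from the explicit construction of the appendix). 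Since $\dt$ multiplies $G$, one then gets $\nabla_Y Q(0,Y^n)=I$, which is invertible --- the analogue of~\eqref{eq:grad_Q} at $\dt=0$.

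I would then conclude via the implicit function theorem exactly as for Proposition~\ref{prop:existence}: since $Q(0,Y^n)=0$ and $\nabla_Y Q(0,Y^n)$ is an isomorphism of $\R^{2N_D+3N_K}$, there exist a constant $c_0 \equiv c_0(\velhn,\lstrhn) > 0$ and a $C^1$ map $R:[0,c_0) \to \R^{2N_D+3N_K}$ such that, in a neighborhood of $(0,Y^n)$, $Y = R(\dt) \Longleftrightarrow Q(\dt,Y)=0$; reading off the components of $R(\dt)$ yields the unique solution $(\velhnp,\lstrhnp)$ to~\eqref{eq:log-P0-charac} (resp.~\eqref{eq:log-P0-DG}). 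Contrary to Proposition~\ref{prop:existence}, no restriction to the open set $S_+^*$ of positive definite tensors is needed, since $\elstrhnp$ is automatically symmetric positive definite as the exponential of a symmetric matrix; this is precisely the structural advantage of the log-formulation announced in the introduction.

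The step I expect to be the main obstacle is the $C^1$ regularity of $G$, and more precisely of the decomposition map $\elstrhnp \mapsto (\Omh^{n+1},\B_h^{n+1},\Nh^{n+1})$. The explicit formulae for $\Om$, $\B$, $\N$ are naturally written in the eigenframe of $\elstrhnp$ and involve its eigenvalues, so they are manifestly smooth on the set of symmetric positive definite matrices with simple spectrum, but a priori only continuous where two eigenvalues collide. One must therefore either argue in a neighborhood of $\lstrhn$ in which $\elstrhn$ has distinct eigenvalues on each element --- the generic situation --- or check that the particular combination $\Omh^{n+1}\lstrhnp - \lstrhnp\Omh^{n+1} + 2\B_h^{n+1}$ that actually enters $G$ extends in a $C^1$ fashion across the coalescence locus. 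The matrix-exponential part of the argument is, by contrast, routine.
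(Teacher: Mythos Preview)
Your approach is essentially the same as the paper's: reduce to the divergence-free space, write the scheme as $Q(\dt,Y)=0$, observe that the new nonlinear terms enter with a factor $\dt$ so that $\nabla_Y Q(0,Y^n)=I$, and conclude by the implicit function theorem. The paper's proof is in fact a two-line remark saying exactly this, merely noting that an extra term $\dt\,D(Y)$ appears due to $\elstrhnp$ and asserting it is continuously differentiable.

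You are more careful than the paper on one point: the $C^1$ regularity of the decomposition $(\Om,\B,\N)$ at eigenvalue coalescence. The paper does not discuss this at all. Your instinct that only the combination $\Om\lstr-\lstr\Om+2\B$ matters is the right one: this quantity is nothing but the derivative of $\ln\strs$ along the original Oldroyd-B flow, hence a smooth function of $(\gvel,\strs)$ on the whole of $\mathcal{S}^*_+$, even though the individual pieces $\Om,\B,\N$ need not be. So the obstacle you flag is real for the decomposition itself but not for the map $G$ that actually enters $Q$, and your proof goes through.
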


The proof of Proposition~\ref{prop:log-existence} is fully similar to 
that of the Proposition~\ref{prop:existence},
but for the expressions of $Q(\dt,Y)$ with respect to $Y$.
An additional term $\dt D(Y)$ now appears in $Q$ due to $\elstrhnp$.
This term is continuously differentiable with respect to $Y$,
and the derivative $\nabla_Y Q(0,Y^n)$ is still invertible.
Thus, the proof can be completed using similar arguments.

Anticipating the results of Section~\ref{sec:stability}, we would like to mention
that the above results will be extended in two directions, using the
discrete free energy estimates which will be proved in the following.
\begin{itemize}
\item We will show that the constant $c_0$ in
  Proposition~\ref{prop:existence} (resp. Proposition~\ref{prop:log-existence}) can be
  chosen independently of $(\velhn,\strhn)$ (resp. $(\velhn,\lstrhn)$), which yields a longtime
  existence and uniqueness result for the solutions to the discrete problems (see
  Propositions~\ref{prop:stability} and~\ref{prop:log-stability} below).
\item We will also show, but for the log-formulation only,
  that it is possible to prove a long-time existence
  result without any restriction on the time step $\dt$
  (see Proposition~\ref{prop:log-stability-2} below).
\end{itemize}

\section{Discrete free energy estimates with piecewise constant discretization of the stress fields $\strh$ and $\lstrh$}
\label{sec:P0}

In this section, 
we prove that various numerical schemes with piecewise constant $\strh$ or $\lstrh$ 
satisfy a discrete free energy estimate. 
We first concentrate on Scott-Vogelius finite element spaces for $(\velh,\ph)$ 
and then address the case of other mixed finite element spaces in Section~\ref{sec:need}.

\subsection{Free energy estimates with piecewise constant discretization of $\strh$}

\subsubsection{The characteristic method}
\label{sec:P0-strh-charac}

\begin{proposition} \label{prop:P0-charac}
Let $(\velhn,\phn,\strhn)_{0 \le n \le N_T}$ be solution to~\eqref{eq:P0-charac}, 
such that $\strh^n$ is positive definite.
Then, the free energy of the solution $(\velhn,\phn,\strhn)$:
\begin{equation} \label{eq:Fhn}
F_h^n=F(\velh^n,\strh^n)  = \frac{\Re}{2}\intd|\velh^n|^2 + \frac{\e}{2 \Wi}\intd\tr(\strh^n-\ln\strh^n-\I)\,,
\end{equation}
satisfies:
\begin{equation} \label{eq:free-energy-P0-charac}
\begin{split}
F_h^{n+1} - F_h^n & + \intd \frac{\Re}{2} |\velhnp-\velhn|^2 \\
& + \dt \intd (1-\e)|\gvelhnp|^2 + \frac{\e}{2 {\Wi}^2} \tr\brk{\strhnp+(\strhnp)^{-1}-2I} \le 0.
\end{split}
\end{equation}
In particular, the sequence $(F_h^n)_{0 \le n \le N_T}$ is non-increasing.
\end{proposition}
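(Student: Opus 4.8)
The strategy is to mimic the continuous proof of Theorem~\ref{th:free-energy-classic} at the discrete level, using the discrete analogues of the three test functions. The key point is that $\strhn$ is piecewise constant, hence so is $\strhnp$ (it solves a pointwise-in-$K_k$ algebraic system once $\velhnp$ is fixed on each element), so $(\strhnp)^{-1}$ and $\ln\strhnp$ are legitimate piecewise-constant test functions in $(\Pz)^\dd$. First I would take $(\bv,q,\bphi)=(\velhnp,\phnp,0)$ in~\eqref{eq:P0-charac} to get the discrete kinetic-energy identity; here the algebraic identity $a(a-b)=\frac12(a^2-b^2)+\frac12(a-b)^2$ produces the term $\frac{\Re}{2}\intd|\velhnp-\velhn|^2$, and $\div\velhnp=0$ (property~\eqref{eq:divu=0}) kills the convection and pressure contributions, leaving $\frac{\Re}{2\dt}\intd(|\velhnp|^2-|\velhn|^2)+\frac{\Re}{2\dt}\intd|\velhnp-\velhn|^2 = -(1-\e)\intd|\gvelhnp|^2 - \frac{\e}{\Wi}\intd\strhnp:\gvelhnp$.

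Next I would take $\bphi=\I$ (trace of the constitutive equation) to obtain the discrete analogue of~\eqref{eq:energy2-sigma}: the advection term $\frac{1}{\dt}\intd(\strhnp-\strhn\circ X^n(t^n)):\I$ reduces, after the change of variables $\x\mapsto X^n(t^n,\x)$ whose Jacobian is $1$ because $\div\velhn=0$, to $\frac{1}{\dt}\intd\tr(\strhnp-\strhn)$; this gives $\frac1\dt\intd\tr(\strhnp-\strhn)=2\intd\gvelhnp:\strhnp-\frac1\Wi\intd\tr(\strhnp-\I)$. Then I would take $\bphi=(\strhnp)^{-1}$, which is the discrete replacement of $\I-\strs^{-1}$ weighted appropriately; the crucial inequalities are~\eqref{eq:traceAB} applied to the term $\tr((\strhnp-\strhn\circ X^n)(\strhnp)^{-1})\ge\tr(\ln\strhnp-\ln(\strhn\circ X^n))$, which by the unit-Jacobian change of variables integrates to $\intd\tr(\ln\strhnp-\ln\strhn)$, and the identity $((\gvelhnp)\strhnp+\strhnp(\gvelhnp)^T):(\strhnp)^{-1}=2\tr\gvelhnp=0$. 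This yields $\frac1\dt\intd\tr(\ln\strhnp-\ln\strhn)\le\frac1\Wi\intd\tr((\strhnp)^{-1}-\I)$, the discrete (inequality) analogue of~\eqref{eq:energy3-sigma}.

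Finally I would combine the three relations exactly as in the continuous proof --- identity(kinetic) $+\frac{\e}{2\Wi}\times$(trace) $-\frac{\e}{2\Wi}\times$(inverse) --- so that the $\intd\gvelhnp:\strhnp$ terms cancel, multiply through by $\dt$, and collect terms using $F_h^n$ as defined in~\eqref{eq:Fhn}. The entropic contribution assembles into $F_h^{n+1}-F_h^n$, the kinetic remainder gives $\frac{\Re}{2}\intd|\velhnp-\velhn|^2$, and the dissipation $\dt\intd[(1-\e)|\gvelhnp|^2+\frac{\e}{2\Wi^2}\tr(\strhnp+(\strhnp)^{-1}-2\I)]$ appears with the correct sign; because the two inequalities~\eqref{eq:traceAB} were used with the ``$\le$'' pointing the right way, the equality becomes the asserted ``$\le 0$''. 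Since $\tr(\strhnp+(\strhnp)^{-1}-2\I)\ge0$ by~\eqref{eq:tr-plus_inv} and all other added terms are nonnegative, monotonicity of $(F_h^n)$ follows at once.

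\textbf{Main obstacle.} The delicate points are the handling of the characteristic advection term --- specifically, justifying the change of variables $\x\mapsto X^n(t^n,\x)$ and the unit Jacobian, which relies on $\div\velhn=0$ and on the flow being well defined (here the piecewise-polynomial-in-space, globally continuous structure of $\velhn\in(\Pt)^d$ matters) --- and verifying that the inequality~\eqref{eq:traceAB} applies pointwise on each element to the piecewise-constant matrices $\strhn\circ X^n(t^n)$ and $\strhnp$, both of which must be symmetric positive definite (the hypothesis on $\strhn$ together with Proposition~\ref{prop:existence} for $\strhnp$). Everything else is a faithful discrete transcription of the continuous computation.
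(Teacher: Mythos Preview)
Your approach is essentially the paper's: the paper uses the single stress test function $\frac{\e}{2\Wi}(\I-(\strhnp)^{-1})$ in one shot, and you simply split this by linearity into $\bphi=\I$ and $\bphi=(\strhnp)^{-1}$ and recombine. All the key ingredients---the identity $a(a-b)=\tfrac12(a^2-b^2)+\tfrac12(a-b)^2$, the vanishing of the convective and pressure terms via~\eqref{eq:divu=0}, the unit Jacobian of $X^n$, and the entropy inequality~\eqref{eq:traceAB}---are identical.

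There is one sign slip to fix. Applying~\eqref{eq:traceAB} with $\strs=\strhn\circ X^n(t^n)$ and $\str=\strhnp$ gives
\[
\tr\!\brk{(\strhnp-\strhn\circ X^n(t^n))(\strhnp)^{-1}}
=\tr\!\brk{\I-(\strhn\circ X^n(t^n))(\strhnp)^{-1}}
\;\le\;\tr\!\brk{\ln\strhnp-\ln(\strhn\circ X^n(t^n))},
\]
not $\ge$. Consequently your ``discrete analogue of~\eqref{eq:energy3-sigma}'' should read
\[
\frac1\dt\intd\tr\!\brk{\ln\strhnp-\ln\strhn}\;\ge\;\frac1\Wi\intd\tr\!\brk{(\strhnp)^{-1}-\I}.
\]
With this orientation, subtracting $\frac{\e}{2\Wi}\times$(inverse) yields an upper bound and the combination indeed produces~\eqref{eq:free-energy-P0-charac}; with the signs as you wrote them, the final inequality would come out the wrong way. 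Everything else in the plan is correct.
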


\begin{proof}[Proof of Proposition \ref{prop:P0-charac}]
Let $(\velhnp,\phnp,\strhnp)$ be a solution to system~\eqref{eq:P0-charac}. 
 Notice that $(\strhnp)^{-1}$ is still in $(\Pz)^\dd$.
We can thus use $(\velhnp,\phnp,\frac{\e}{2\Wi}\brk{\I-(\strhnp)^{-1})}$ as a test function 
in the system~\eqref{eq:P0-charac}, which yields:
\begin{equation}
\begin{split}
0 = \intd & \Bigg[\Re\brk{\frac{|\velhnp|^2-|\velhn|^2}{2\dt} +
\frac{|\velhnp-\velhn|^2}{2\dt} 
+ \velhn\cdot\grad\frac{|\velhnp|^2}{2}} 
\\
& - \phnp\div\velhnp + \phnp\div \velhnp+ (1-\e)|\gvelhnp|^2
  +\frac{\e}{\Wi}\strhnp:\gvelhnp\Bigg]
\\
& + \frac{\e}{2\Wi} \Bigg[\brk{  \frac{\strhnp-\strhn \circ X^n(t^n)
}{\dt} }:(\I-(\strhnp)^{-1})
\\
& - 2(\gvelhnp)\strhnp:(\I-(\strhnp)^{-1})+
\frac{1}{\Wi}(\strhnp-\I):(\I-(\strhnp)^{-1})\Bigg].
\end{split}
\end{equation}

We first examine the terms associated with momentum conservation and incompressibility.
We recall that $\velhnp$ satisfies~\eqref{eq:divu=0} since we use Scott-Vogelius finite elements.
By the Stokes theorem (using the no-slip boundary condition), we immediately obtain:
\[
 \intd \velhn\cdot\grad{|\velhnp|^2}=  - \intd (\div\velhn) {|\velhnp|^2} = 0.
\]
The terms involving $\phnp$ also directly cancel.
We now consider the terms involving $\strhnp$. 
The upper-convective term in the tensor derivative rewrites:
\[
(\gvelhnp)\strhnp:(\I-(\strhnp)^{-1}) = \strhnp:\gvelhnp - \div \velhnp,
\]
which vanishes after combination with the extra-stress term $\strhnp:\gvelhnp$
in the momentum conservation equation, 
and using the incompressibility property.
The last term rewrites:
\[
 (\strhnp-\I):(\I-(\strhnp)^{-1}) = \tr\brk{\strhnp+(\strhnp)^{-1}-2\I}. 
\]
The remaining term writes:
\begin{align*}
\intd \brk{\strhnp-\strhn \circ X^n(t^n)}:(\I-(\strhnp)^{-1}) 
= \intd &\tr(\strhnp) - \tr(\strhn \circ X^n(t^n)) \\
 &+ \tr\left( [\strhn \circ X^n(t^n)] [\strhnp]^{-1} -\I \right).
\end{align*}
We first make use of~\eqref{eq:traceAB} with $\strs=\strhn \circ X^n(t^n)$ and
$\str=\strhnp$:
\begin{equation*}
\tr\left( [\strhn \circ X^n(t^n)] [\strhnp]^{-1} -\I \right)
 \geq   \tr\ln(\strhn \circ X^n(t^n)) - \tr\ln(\strhnp).
\end{equation*}
Then, we have:
\[
\intd - \tr\brk{\strhn \circ X^n(t^n) + \ln(\strhn \circ X^n(t^n))} = \intd - \tr\brk{\strhn + \ln\strhn}\,,
\]
since the strong incompressibility property ($\div \velhn = 0$) implies 
that the flow $X^n(t)$ defines a mapping 
with constant Jacobian equal to $1$ for all $t \in [t^n,t^{n+1}]$.
Finally, we get the following lower bound:
\[
\intd \brk{\strhnp-\strhn \circ X^n(t^n)}:(\I-(\strhnp)^{-1})
\geq 
\intd \tr(\strhnp - \ln \strhnp) - \tr(\strhn - \ln \strhn),
\]
hence the result~\eqref{eq:free-energy-P0-charac}.

Notice that $\tr\brk{\strhnp+(\strhnp)^{-1}-2\I} \ge 0$ in virtue of 
the equation~\eqref{eq:tr-plus_inv}, which shows that 
the sequence $(F_h^n)_{0 \le n \le N_T}$ is non-increasing.
\end{proof}

\subsubsection{The discontinuous Galerkin method}
\label{sec:P0-strh-DG}

\begin{proposition} \label{prop:P0-DG}
Let $(\velhn,\phn,\strhn)_{0 \le n \le N_T}$ be solution to~\eqref{eq:P0-DG}, such that $\strhn$ is positive definite. Then, the free energy $F_h^n$ defined by~\eqref{eq:Fhn} satisfies the free energy estimate~\eqref{eq:free-energy-P0-charac}. In particular, the sequence $(F_h^n)_{0 \le n \le N_T}$ is non-increasing.
\end{proposition}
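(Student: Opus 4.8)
The plan is to mimic the proof of Proposition~\ref{prop:P0-charac} step by step, the only change being that the contribution of the backward flow $X^n$ is replaced by the edge jump terms of the discontinuous Galerkin scheme~\eqref{eq:P0-DG}. Since $\strhnp$ is piecewise constant and symmetric positive definite, $(\strhnp)^{-1}\in(\Pz)^\dd$, so one may use $(\bv,q,\bphi)=\brk{\velhnp,\phnp,\frac{\e}{2\Wi}\brk{\I-(\strhnp)^{-1}}}$ as a test function in~\eqref{eq:P0-DG}.

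First I would dispose of all the terms not involving the edge integrals exactly as in Proposition~\ref{prop:P0-charac}: the velocity convection term $\sum_k\int_{K_k}\Re\,\velhn\cdot\gvelhnp\cdot\velhnp=\Re\intd\velhn\cdot\grad\frac{|\velhnp|^2}{2}$ vanishes because $|\velhnp|^2/2$ is globally continuous, $\velhn$ is pointwise divergence-free by~\eqref{eq:divu=0}, and $\velhn=0$ on $\partial\D$; the pressure contributions cancel against $q\,\div\velhnp$; the upper-convective term $-\frac{\e}{\Wi}(\gvelhnp)\strhnp:\brk{\I-(\strhnp)^{-1}}$ reduces to $-\frac{\e}{\Wi}\strhnp:\gvelhnp$ by symmetry of $\strhnp$ and $\div\velhnp=0$, cancelling the extra-stress coupling; and the relaxation term gives $\frac{\e}{2\Wi^2}\tr\brk{\strhnp+(\strhnp)^{-1}-2\I}\ge 0$ by~\eqref{eq:tr-plus_inv}. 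The discrete time-derivative term is handled elementwise: on each $K_k$ the matrices $\strhn,\strhnp$ are constant and symmetric positive definite, so~\eqref{eq:traceAB} with $\strs=\strhn$, $\str=\strhnp$ yields $\brk{\strhnp-\strhn}:\brk{\I-(\strhnp)^{-1}}\ge\tr(\strhnp-\ln\strhnp)-\tr(\strhn-\ln\strhn)$; summing over the mesh and multiplying by $\frac{\e}{2\Wi\dt}$ produces the entropic part of $\frac{1}{\dt}(F_h^{n+1}-F_h^n)$.

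The only genuinely new ingredient is the sign of the edge term $\sum_j\int_{E_j}\Abs{\velhn\cdot\bn}\jump{\strhnp}:\bphi^+$ with $\bphi^+=\frac{\e}{2\Wi}\brk{\I-\brk{(\strhnp)^+}^{-1}}$. The part coming from $\I$ equals $\frac{\e}{2\Wi}\sum_j\int_{E_j}\Abs{\velhn\cdot\bn}\jump{\tr\strhnp}$, which vanishes: by~\eqref{eq:jump} it is $-\frac{\e}{2\Wi}\sum_k\int_{\partial K_k}(\velhn\cdot\bn_{K_k})\tr\strhnp$, and $\tr\strhnp$ is constant on each $K_k$ while $\int_{\partial K_k}\velhn\cdot\bn_{K_k}=\int_{K_k}\div\velhn=0$. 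For the remaining part, on each edge I would use~\eqref{eq:traceAB} with $\strs=(\strhnp)^-$, $\str=(\strhnp)^+$ to get $\jump{\strhnp}:\brk{(\strhnp)^+}^{-1}=-\tr\brk{(\strhnp)^-\brk{(\strhnp)^+}^{-1}-\I}\le\jump{\tr\ln\strhnp}$, whence $-\sum_j\int_{E_j}\Abs{\velhn\cdot\bn}\jump{\strhnp}:\brk{(\strhnp)^+}^{-1}\ge-\sum_j\int_{E_j}\Abs{\velhn\cdot\bn}\jump{\tr\ln\strhnp}=0$ again by the divergence-free argument applied to the piecewise constant scalar $\tr\ln\strhnp$. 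Thus the whole edge term is $\ge 0$, and discarding it turns the identity obtained by testing into~\eqref{eq:free-energy-P0-charac} after multiplication by $\dt$; monotonicity of $(F_h^n)$ then follows from $\tr\brk{\strhnp+(\strhnp)^{-1}-2\I}\ge 0$. The main obstacle is precisely this last point: checking that the upwind jump terms evaluated against the \emph{nonlinear} test field $\I-(\strhnp)^{-1}$ (rather than against $\strhnp$ itself) still carry the favourable sign, which is where the matrix inequality~\eqref{eq:traceAB} and the piecewise constant structure are essential.
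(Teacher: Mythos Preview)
Your proposal is correct and follows essentially the same approach as the paper: you use the same test function $\brk{\velhnp,\phnp,\frac{\e}{2\Wi}(\I-(\strhnp)^{-1})}$, dispose of the non-edge terms exactly as in Proposition~\ref{prop:P0-charac}, and then show the upwind jump term is nonnegative via~\eqref{eq:traceAB} combined with the identity~\eqref{eq:jump} and the incompressibility~\eqref{eq:divu=0}. The only cosmetic difference is that the paper combines the two pieces of the edge term into the single jump $\jump{\tr(\strhnp-\ln\strhnp)}$ before invoking~\eqref{eq:jump}, whereas you treat $\jump{\tr\strhnp}$ and $\jump{\tr\ln\strhnp}$ separately.
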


\begin{proof}[Proof of Proposition \ref{prop:P0-DG}]
We only point out the differences with the proof of Proposition~\ref{prop:P0-charac}.
They consist in the treatment of the discretization of the advection terms for $\strh$. 
We recall that the test function in stress is $\bphi = \frac{\e}{2 \Wi}(\I-(\strhnp)^{-1})$, so that we have:
\begin{align*}
\sum_{j=1}^{N_E} &\int_{E_j} \Abs{\velhn\cdot\bn} \jump{\strhnp}:\brk{\I-(\strhnp)^{-1}}^+ \\
&=
\sum_{j=1}^{N_E} \int_{E_j} \Abs{\velhn\cdot\bn} \jump{\tr(\strhnp)} 
+
\sum_{j=1}^{N_E} \int_{E_j} \Abs{\velhn\cdot\bn} \tr \brk{\strh^{n+1,-}(\strh^{n+1,+})^{-1}-\I}.
\end{align*}
Again, we make use of~\eqref{eq:traceAB}, with $\strs = \strh^{n+1,-}$ and $\str = \strh^{n+1,+}$:
\[
\tr\brk{\strh^{n+1,-}(\strh^{n+1,+})^{-1}-\I}\geq\tr\brk{\ln\strh^{n+1,-}-\ln\strh^{n+1,+}}.
\]
We get, by Formula~\eqref{eq:jump}, the fact that $\strh^{n+1} \in (\Pz)^\dd$, the Stokes theorem 
and the incompressibility property~\eqref{eq:divu=0}:
\begin{align}
\sum_{j=1}^{N_E} \int_{E_j} \Abs{\velhn\cdot\bn} \jump{\strhnp} : \left(\I-(\strhnp)^{-1}\right)^+ &\geq 
\sum_{j=1}^{N_E} \int_{E_j} \Abs{\velhn\cdot\bn} \jump{\tr( \strh^{n+1} - \ln \strh^{n+1} )},\nonumber\\
& =
- \sum_{k=1}^{N_K} 
\int_{\partial K_k}\brk{\velhn\cdot \bn_{K_k}}\tr( \strh^{n+1} - \ln \strh^{n+1} ), \nonumber\\
& =
- \sum_{k=1}^{N_K} 
\left( \tr( \strh^{n+1} - \ln \strh^{n+1} ) \right)\Big|_{K_k} \int_{\partial K_k} \velhn\cdot \bn_{K_k},  \nonumber \\
& =
- \sum_{k=1}^{N_K} 
\left( \tr( \strh^{n+1} - \ln \strh^{n+1} ) \right)\Big|_{K_k} \int_{K_k} \div(\velhn),\nonumber \\ 
& = 0. \label{eq:preuve_DG}
\end{align}
Moreover, it is easy to prove the following, 
using the same technique as in the proof of Proposition~\ref{prop:P0-charac}:
\[
\intd \brk{\strhnp-\strhn}:(\I-(\strhnp)^{-1})
\geq 
\intd \tr(\strhnp - \ln \strhnp) - \tr(\strhn - \ln \strhn).
\]
This concludes the proof.
\end{proof}

\subsection{Free energy estimates with piecewise constant discretization of $\lstrh$}

This section is the equivalent of the previous section for the log-formulation.

\subsubsection{The characteristic method}
\label{sec:log-P0-charac}

\begin{proposition} \label{prop:log-P0-charac}
Let $(\velhn,\phn,\lstrhn)_{0 \le n \le N_T}$ be solution to~\eqref{eq:log-P0-charac}.
Then, the free energy of the solution $(\velhn,\phn,\lstrhn)$:
\begin{equation} \label{eq:log-Fhn}
F_h^n=F(\velhn,\elstrhn)  = \frac{\Re}{2}\intd|\velhn|^2 + \frac{\e}{2 \Wi}\intd\tr(\elstrhn-\lstrhn-\I)\,,
\end{equation}
satisfies:
\begin{equation} \label{eq:free-energy-log-P0-charac}
\begin{split}
F_h^{n+1} - F_h^n & + \intd \frac{\Re}{2} |\velhnp-\velhn|^2 \\
& + \dt \intd (1-\e)|\gvelhnp|^2 + \frac{\e}{2 {\Wi}^2} \tr\brk{\elstrhnp+\emstrhnp-2I} \le 0.
\end{split}
\end{equation}
In particular, the sequence $(F_h^n)_{0 \le n \le N_T}$ is non-increasing.
\end{proposition}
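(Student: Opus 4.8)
The plan is to mimic the proof of Proposition~\ref{prop:P0-charac} (and of Theorem~\ref{th:free-energy-log} at the continuous level), testing~\eqref{eq:log-P0-charac} against the triple $\brk{\velhnp,\phnp,\frac{\e}{2\Wi}(\elstrhnp-\I)}$. This choice is admissible: since $\lstrhnp\in(\Pz)^\dd$ is piecewise constant and symmetric, $\elstrhnp$ is piecewise constant and symmetric positive definite, so $\elstrhnp-\I\in(\Pz)^\dd$. Observe already that, unlike in Proposition~\ref{prop:P0-charac}, no positivity hypothesis on the discrete unknown is needed here --- $\elstrhnp$ is automatically symmetric positive definite --- which is the structural advantage of the log-formulation and explains why the statement above carries no assumption on $\lstrhn$. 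The velocity and pressure contributions are then handled exactly as in Proposition~\ref{prop:P0-charac}: the $\Re$-terms produce $\frac{\Re}{2\dt}\intd\brk{|\velhnp|^2-|\velhn|^2}+\frac{\Re}{2\dt}\intd|\velhnp-\velhn|^2$ (the convective term vanishing by $\div\velhn=0$, see~\eqref{eq:divu=0}, and the no-slip condition), the pressure terms cancel, and the viscous term gives $(1-\e)\intd|\gvelhnp|^2$.

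Next I would dispatch the terms of the constitutive equation, with $\bphi=\frac{\e}{2\Wi}(\elstrhnp-\I)$, mirroring the proof of Theorem~\ref{th:free-energy-log}. The rotation term vanishes: since $\lstrhnp$ and $\elstrhnp$ commute, $(\Omh^{n+1}\lstrhnp-\lstrhnp\Omh^{n+1}):\elstrhnp=0$ by the computation~\eqref{eq:omegapsi0}, while $(\Omh^{n+1}\lstrhnp-\lstrhnp\Omh^{n+1}):\I=\tr(\Omh^{n+1}\lstrhnp)-\tr(\lstrhnp\Omh^{n+1})=0$. For the term $-2\B_h^{n+1}:(\elstrhnp-\I)$ I would invoke the decomposition~\eqref{eq:discrete-decomposition}: the antisymmetry of $\Omh^{n+1},\Nh^{n+1}$ and the symmetry of $\elstrhnp$ give $\B_h^{n+1}:\elstrhnp=\gvelhnp:\elstrhnp$ (the term $(\Nh^{n+1}\emstrhnp):\elstrhnp=\tr\Nh^{n+1}$ also vanishing by antisymmetry), while $\B_h^{n+1}:\I=\tr\gvelhnp=\div\velhnp=0$; hence, after the $\frac{\e}{2\Wi}$ scaling, this term contributes $-\frac{\e}{\Wi}\intd\gvelhnp:\elstrhnp$, which cancels exactly the extra-stress term $\frac{\e}{\Wi}\intd\elstrhnp:\gvelhnp$. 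Finally, the relaxation term contributes $\frac{\e}{2\Wi^2}\intd\tr(\elstrhnp+\emstrhnp-2\I)$, the desired dissipation, because $(\emstrhnp-\I):(\elstrhnp-\I)=-\tr(\elstrhnp+\emstrhnp-2\I)$.

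The crux is the discrete material-derivative term $\frac{\e}{2\Wi\dt}\intd\brk{\lstrhnp-\lstrhn\circ X^n(t^n)}:(\elstrhnp-\I)$. Setting $\sigma=\elstrhnp$ and $\tau=e^{\lstrhn\circ X^n(t^n)}$, both symmetric positive definite with $\ln\sigma=\lstrhnp$ and $\ln\tau=\lstrhn\circ X^n(t^n)$, a direct expansion yields the pointwise identity
\[
\brk{\lstrhnp-\lstrhn\circ X^n(t^n)}:(\elstrhnp-\I)=\Brk{\tr\brk{(\ln\sigma-\ln\tau)\sigma}-\tr(\sigma-\tau)}+\tr(\sigma-\ln\sigma-\I)-\tr(\tau-\ln\tau-\I).
\]
By the algebraic inequality~\eqref{eq:traceAB+} the bracket is non-negative, so that
\[
\intd\brk{\lstrhnp-\lstrhn\circ X^n(t^n)}:(\elstrhnp-\I)\ge\intd\tr(\elstrhnp-\lstrhnp-\I)-\intd\tr\brk{e^{\lstrhn\circ X^n(t^n)}-\lstrhn\circ X^n(t^n)-\I}.
\]
Since $\div\velhn=0$, the backward flow~\eqref{eq:flow-P0-charac} has Jacobian identically $1$, so the change of variables $\x\mapsto X^n(t^n,\x)$ turns the last integral into $\intd\tr(\elstrhn-\lstrhn-\I)$. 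Collecting all the contributions and multiplying by $\dt$ then gives exactly~\eqref{eq:free-energy-log-P0-charac}; the sequence $(F_h^n)$ is non-increasing because $\frac{\Re}{2}|\velhnp-\velhn|^2$, $(1-\e)|\gvelhnp|^2$ and $\tr(\elstrhnp+\emstrhnp-2\I)$ (the latter by~\eqref{eq:tr-plus_inv} applied to the symmetric positive definite matrix $\elstrhnp$) are all non-negative.

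I expect the only genuinely delicate point to be the handling of this discrete material-derivative term: one must find the algebraic splitting that makes the cross terms reconstruct the entropic density $\tr(\strs-\ln\strs-\I)$ at both time levels, and recognize that~\eqref{eq:traceAB+} --- the log-formulation counterpart of the inequality~\eqref{eq:traceAB} used in Proposition~\ref{prop:P0-charac} --- is precisely what controls the remainder. Everything else is a routine transcription of the continuous computation, made possible by the exact incompressibility~\eqref{eq:divu=0} of the Scott--Vogelius velocity field.
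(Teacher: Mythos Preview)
Your proposal is correct and follows essentially the same route as the paper's proof: the same test function $\brk{\velhnp,\phnp,\frac{\e}{2\Wi}(\elstrhnp-\I)}$, the same treatment of the velocity--pressure terms via~\eqref{eq:divu=0}, the same cancellations for the $\Omh^{n+1}$ and $\B_h^{n+1}$ terms via~\eqref{eq:omegapsi0} and~\eqref{eq:uB}, and the same use of inequality~\eqref{eq:traceAB+} together with the unit Jacobian of $X^n$ for the material-derivative term. Your algebraic splitting of that term is slightly more explicit than the paper's presentation, but the argument is identical in substance.
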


\begin{proof}[Proof of Proposition \ref{prop:log-P0-charac}]
We shall use as test functions $\brk{\velhnp,\phnp,\frac{\e}{2\Wi}(e^{\lstrhnp} -\I)}$ in~\eqref{eq:log-P0-charac}. 
We emphasize that,
as long as the solution $(\velh^{n+1},\ph^{n+1},\lstrh^{n+1})$ exists (see Proposition~\ref{prop:log-existence}), 
$e^{\lstrhnp}$ is well-defined, symmetric positive definite and piecewise constant. 

The terms are treated similarly as in the proof of Proposition \ref{prop:P0-charac}. 
For the material derivative of $\lstrh$, we have:
\begin{align*}
 \intd \brk{\lstrhnp-\lstrhn\circ X^n(t^n)}:(e^{\lstrhnp} - \I)
 = & \intd  \brk{\lstrhnp-\lstrhn\circ X^n(t^n)}:e^{\lstrhnp} \\
   & \quad - \tr \brk{\lstrhnp-\lstrhn\circ X^n(t^n)} .
\end{align*}
Using the equation~\eqref{eq:traceAB+} with $\strs=e^{\lstrhnp}$ and $\str=e^{\lstrhn \circ X^n(t^n)}$, we obtain:
\[
\brk{\lstrhnp-\lstrhn\circ X^n(t^n)}:e^{\lstrhnp} \ge \tr(e^{\lstrhnp} - e^{\lstrhn\circ X^n(t^n)}),
\]
and thus:
\begin{align*}
 \intd \brk{\lstrhnp-\lstrhn\circ X^n(t^n)}:(e^{\lstrhnp} - \I)
 & \ge \intd \tr (e^{\lstrhnp} - \lstrhnp) - \intd \tr (e^{\lstrhn} - \lstrhn)\circ X^n(t^n),\\
 & = \intd \tr (e^{\lstrhnp} - \lstrhnp) - \intd \tr (e^{\lstrhn} - \lstrhn),
\end{align*}
where the fact that the Jacobian of the flow $X^n$ is constant equal to one
(because $\velhn$ is divergence free) has been used in the change of variable in the last equality.

Besides, using the equation~\eqref{eq:omegapsi0}, we have:
\begin{align*}
\intd ( \Omh^{n+1}\lstrhnp - \lstrhnp\Omh^{n+1} ):(e^{\lstrhnp} -\I)
&=\intd ( \Omh^{n+1}\lstrhnp - \lstrhnp\Omh^{n+1} ):e^{\lstrhnp},\\ 
&=0.
\end{align*}

Last, using~\eqref{eq:uB}:
\begin{align*}
\intd \B_h^{n+1}:(e^{\lstrhnp} -\I)
&=\intd \B_h^{n+1}:e^{\lstrhnp} - \intd \tr(\B_h^{n+1}),\\
&=\intd \gvelhnp:e^{\lstrhnp} - \intd \div(\velhnp),\\
&=\intd \gvelhnp:e^{\lstrhnp},
\end{align*}
which cancels out with the same term $\intd e^{\lstrhnp}:\gvelhnp$ in the momentum equation.
\end{proof}

\subsubsection{The discontinuous Galerkin method}
\label{sec:log-P0-DG}

\begin{proposition} \label{prop:log-P0-DG}
Let $(\velhn,\phn,\lstrhn)_{0 \le n \le N_T}$ be solution to~\eqref{eq:log-P0-DG}. Then, the free energy $F_h^n$ defined by~\eqref{eq:log-Fhn} satisfies the free energy estimate~\eqref{eq:free-energy-log-P0-charac}. In particular, the sequence $(F_h^n)_{0 \le n \le N_T}$ is non-increasing.
\end{proposition}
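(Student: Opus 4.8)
The plan is to mimic, almost line by line, the proof of Proposition~\ref{prop:log-P0-charac}, the only genuinely new ingredient being the discontinuous-Galerkin treatment of the advection term for $\lstrh$, which is handled exactly as in the proof of Proposition~\ref{prop:P0-DG}. First I would check that $\brk{\velhnp,\phnp,\frac{\e}{2\Wi}(\elstrhnp-\I)}$ is an admissible test function in~\eqref{eq:log-P0-DG}: as long as the discrete solution exists (Proposition~\ref{prop:log-existence}), $\lstrhnp$ is piecewise constant, hence so is $\elstrhnp$, and $\elstrhnp$ is symmetric positive definite, so $\elstrhnp-\I\in(\Pz)^\dd$. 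Inserting this test function, the terms arising from momentum conservation and incompressibility, the term $(\Omh^{n+1}\lstrhnp-\lstrhnp\Omh^{n+1}):\elstrhnp$, which vanishes by~\eqref{eq:omegapsi0} since $\lstrhnp$ and $\elstrhnp$ commute, the term $\B_h^{n+1}:(\elstrhnp-\I)$, which by~\eqref{eq:uB} and $\div\velhnp=0$ reduces to $\intd\gvelhnp:\elstrhnp$ and cancels the extra-stress contribution in the momentum equation, and the relaxation term $\frac{1}{\Wi}(\emstrhnp-\I):(\elstrhnp-\I)=\frac{1}{\Wi}\tr(\elstrhnp+\emstrhnp-2\I)$, are all treated precisely as in the proof of Proposition~\ref{prop:log-P0-charac}; these manipulations are purely elementwise and insensitive to the change from the characteristic to the DG discretization.

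Next I would handle the discrete material derivative, which here takes the simpler form $\frac{\lstrhnp-\lstrhn}{\dt}$, with no composition with a flow. Contracting with $\elstrhnp-\I$ and separating the trace part, I would use~\eqref{eq:traceAB+} with $\strs=\elstrhnp$ and $\str=\elstrhn$ to obtain the pointwise bound $(\lstrhnp-\lstrhn):\elstrhnp\ge\tr(\elstrhnp-\elstrhn)$, which yields $\intd(\lstrhnp-\lstrhn):(\elstrhnp-\I)\ge\intd\tr(\elstrhnp-\lstrhnp)-\tr(\elstrhn-\lstrhn)$. Unlike in Proposition~\ref{prop:log-P0-charac}, no change of variables along the characteristics is needed at this stage.

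The new step is the control of the edge terms. With $\bphi=\frac{\e}{2\Wi}(\elstrhnp-\I)$ we have $\bphi^+=\frac{\e}{2\Wi}\brk{e^{\lstrh^{n+1,+}}-\I}$ by the convention~\eqref{eq:sign-convention}, so that $\jump{\lstrhnp}:\bphi^+=\frac{\e}{2\Wi}\brk{\jump{\lstrhnp}:e^{\lstrh^{n+1,+}}-\jump{\tr\lstrhnp}}$. I would then apply~\eqref{eq:traceAB+} with $\strs=e^{\lstrh^{n+1,+}}$ and $\str=e^{\lstrh^{n+1,-}}$, whose matrix logarithms are $\lstrh^{n+1,+}$ and $\lstrh^{n+1,-}$ respectively, to get $\jump{\lstrhnp}:e^{\lstrh^{n+1,+}}=\brk{\lstrh^{n+1,+}-\lstrh^{n+1,-}}:e^{\lstrh^{n+1,+}}\ge\tr\brk{e^{\lstrh^{n+1,+}}-e^{\lstrh^{n+1,-}}}=\jump{\tr\elstrhnp}$. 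Hence $\sum_{j=1}^{N_E}\int_{E_j}\Abs{\velhn\cdot\bn}\jump{\lstrhnp}:\bphi^+\ge\frac{\e}{2\Wi}\sum_{j=1}^{N_E}\int_{E_j}\Abs{\velhn\cdot\bn}\jump{\tr(\elstrhnp-\lstrhnp)}$, and by Formula~\eqref{eq:jump}, the elementwise constancy of $\tr(\elstrhnp-\lstrhnp)$ (since $\lstrhnp\in(\Pz)^\dd$), the Stokes theorem and the strong incompressibility~\eqref{eq:divu=0}, this lower bound equals $-\frac{\e}{2\Wi}\sum_{k=1}^{N_K}\brk{\tr(\elstrhnp-\lstrhnp)}\Big|_{K_k}\int_{K_k}\div\velhn=0$, exactly as in~\eqref{eq:preuve_DG}. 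Collecting all contributions produces~\eqref{eq:free-energy-log-P0-charac}, and since~\eqref{eq:tr-plus_inv} applied to $\elstrhnp$ gives $\tr(\elstrhnp+\emstrhnp-2\I)\ge0$, the sequence $(F_h^n)_{0\le n\le N_T}$ is non-increasing.

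I do not expect any serious obstacle here: the proof is essentially a transcription of the two proofs already given. The one point deserving care is the upstream/downstream bookkeeping in the edge integrals, that is, using $\bphi^+$ consistently with~\eqref{eq:sign-convention} so that~\eqref{eq:traceAB+} is applied to the correct ordered pair $\brk{e^{\lstrh^{n+1,+}},e^{\lstrh^{n+1,-}}}$ and the resulting boundary contributions telescope to zero through~\eqref{eq:jump}, together with the (already noted) fact that $\elstrhnp$ is symmetric positive definite and piecewise constant, so that it is an admissible test quantity and the algebraic inequality~\eqref{eq:traceAB+} applies.
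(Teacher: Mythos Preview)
Your proposal is correct and follows precisely the route the paper indicates: combine the elementwise manipulations of Proposition~\ref{prop:log-P0-charac} with the DG edge-term treatment of Proposition~\ref{prop:P0-DG}, using~\eqref{eq:traceAB+} on the ordered pair $(e^{\lstrh^{n+1,+}},e^{\lstrh^{n+1,-}})$ and then~\eqref{eq:jump} together with $\div\velhn=0$. In fact you have written out more detail than the paper itself, which merely states that the proof is straightforward from those two propositions; the one cosmetic slip is a missing minus sign in the relaxation-term identity (it should read $-(\emstrhnp-\I):(\elstrhnp-\I)=\tr(\elstrhnp+\emstrhnp-2\I)$), but this does not affect the argument.
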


The proof is straightforward using elements of the proofs of Proposition~\ref{prop:log-P0-charac} and Proposition~\ref{prop:P0-DG}. 

\subsection{Other finite elements for $(\velh,\ph)$}
\label{sec:need}

In this section, we review some finite element spaces for $(\velh,\ph)$ other than Scott-Vogelius
for which the results of the last two sections still hold. 

First, let us stress the key arguments we used in the proofs above. 
If the advection terms $\vel \cdot \grad \strs$ and $\vel \cdot \grad \lstr$ are discretized by 
the \emph{characteristic} method, we need the velocity field $\velhn$ to be divergence free:
\begin{equation}\label{eq:strong-incompressibility}
\div \velhn = 0,
\end{equation}
in order for the flow $X^n$ satisfying~\eqref{eq:flow-P0-charac} to be with Jacobian one.
When $\velhn$ is only piecewise smooth (consider below the case of $\Pod$ velocity fields),
the divergence in the left-hand side of~\eqref{eq:strong-incompressibility} should be understood in the distribution sense. 
By the way, the equation~\eqref{eq:strong-incompressibility} univoquely defines 
the trace of the normal component $\velhn\cdot\bn$ on the edges of the mesh,
which is a sufficient condition to define the flow associated with the vector field $\velhn$ through \eqref{eq:flow-P0-charac},
and which is necessary to treat the advection term in the Navier-Stokes equation.

If the advection terms are discretized by the \emph{discontinuous Galerkin} method, it is necessary that the trace of the normal component of $\velh$  be univoquely defined on the edges of the mesh since it appears in the jump terms  $\sum_{j=1}^{N_E} \int_{E_j}  \Abs{\velhn\cdot\bn} \jump{\strhnp} : \bphi^+$ or $\sum_{j=1}^{N_E} \int_{E_j}  \Abs{\velhn\cdot\bn}  \jump{\lstrhnp} : \bphi^+$ in the variational formulations. But to obtain~\eqref{eq:preuve_DG}, and contrary to the characteristic method, only the following \emph{weak} incompressibility property is needed:
\begin{equation*}
\forall k=1\dots N_K, \int_{K_k} \div \velhn = 0,
\end{equation*}
which is equivalent to write
\begin{equation}\label{eq:weak-incompressibility}
\forall q \in\mathbb{P}_0, \intd \div (\velhn) q = 0.
\end{equation}

These properties needed to obtain the discrete free energy estimates are summarized in Table~\ref{tab:P0-summary}.

\begin{table}[!h]
\begin{center}
\begin{tabular}[t]{|l||p{5cm}|p{4cm}|}
\hline
Advection discretized by: &Characteristics  & DG \\ 
\hline
Requirements for $\velh$:
& 
$\div \velh = 0$  \newline
( $\Rightarrow \det(\nabla_{\x}X^n) \equiv 1 $ ) \newline
( $\Rightarrow \brk{\velh \cdot \bn}|_{E_j} $ well defined )
&
$ \intd q \div \velh = 0$, $\forall q \in \Pz$ \newline
\ and \newline
$ \brk{\velh \cdot \bn}|_{E_j} $ well defined \\
\hline
\end{tabular}
\caption{\label{tab:P0-summary} Summary of the arguments  
with $(\velh,\ph,\strh)$ or $(\velh,\ph,\lstrh)$ in $(\Pt)^d\times\Pod\times(\Pz)^\dd$}
\end{center}
\end{table}

Below, we consider the following alternative choices of the finite elements space for $(\velh,\ph)$:
\begin{itemize}
\item the Taylor-Hood finite element space: $(\velh,\ph) \in (\Pt)^d\times\Po$, which satisfies the Babu\v{s}ka-Brezzi inf-sup condition, whatever the mesh ;
\item the mixed Crouzeix-Raviart finite element space (see~\cite{crouzeix-raviart-73}): $(\velh,\ph) \in (\Po^{CR})^d \times \Pz$, where
\begin{equation}\label{eq:P1CR}
 \mathbb{P}_{1}^{CR}=\left\{v \in \Pod, \forall E_j, \int_{E_j}\jump{v}=0 \right\},
\end{equation}
which also satisfies the Babu\v{s}ka-Brezzi inf-sup condition, whatever the mesh ;
\item stabilized formulations for $(\velh,\ph) \in (\Po)^d\times\Po$ 
or $(\velh,\ph) \in (\Po)^d\times\Pz$.
\end{itemize}
This is not exhaustive, but it is sufficient to highlight which
modifications are needed in the variational formulations, compared to
the Scott-Vogelius mixed finite element, for the discrete free energy
estimates to hold. In particular, some projection of the velocity field
is needed in the discretization of the advection terms $\vel \cdot \grad
\strs$ and $\vel \cdot \grad \lstr$ in order to satisfy the requirements
of Table~\ref{tab:P0-summary}. These projection operators are introduced
in the next Section~\ref{sec:projections}.

The results of Section~\ref{sec:need} are summarized in Table~\ref{tab:finite element-summary}.

\subsubsection{Some useful projection operators for the velocity field}
\label{sec:projections}

Let us introduce three projection operators for the velocity field.

Following \cite{pironneau-82}, we first define the orthogonal projection $\Phrot$ of $(\Pod)^d$ 
onto the piecewise constant solenoidal vector fields
built from affine continuous scalar fields:
$$ \{ \nabla \times \zeta_h | \zeta_h \in (\Po)^d, \zeta_h \times \bn =
0 \text{ on } \partial\D \}. $$
We suppose here that $d=3$, but the extension to the case $d=2$ is straightforward.
We set $\Phrot\brk{\velh} = \nabla \times \psi_h$ where 
$\psi_h \in (\Po)^d$, such that $\psi_h\times\bn|_{\partial\D}=0$, satisfies :
$$ \intd (\nabla \psi_h) : (\nabla \zeta_h) = \intd \velh \cdot (\nabla \times \zeta_h),
\forall \zeta_h \in (\Po)^d, \zeta_h \times\bn|_{\partial\D}=0. $$
Because $\Phrot\brk{\velh}$ is solenoidal, we always have the strong incompressibility property~\eqref{eq:strong-incompressibility}:
$$\div \Phrot\brk{\velh} = 0\,,$$ 
for any velocity field $\velh$. Of course, this operator is consistent
only for divergence free velocity fields $\velh$ (or velocity field
$\velh$ with
vanishing divergence when $h$ goes to zero). See~\cite{pironneau-82} for
consistency results.

Second, following \cite{ern-guermond-04}, we define the Raviart-Thomas interpolator $\PhRTO$ 
from $(\Pod)^d$ 
onto the vector subspace of $(\Pod)^d$ made of the vector fields in
$(\Pz)^d + \x \Pz$ 
with continuous normal component across the edges $E_j$
(whose trace on $E_j$ is then univoquely defined).
The projection $\PhRTO\brk{\velhn}$ clearly satisfies, for any element $K_k$:
\begin{equation}\label{eq:div-conservation}
\begin{split}
\int_{K_k} \div \velh & = \int_{\partial K_k} \velhn \cdot \bn_{K_k}, 
\\
& =  \int_{\partial K_k} \PhRTO\brk{\velhn} \cdot \bn_{K_k},
\\
& = \int_{K_k} \div \PhRTO\brk{\velhn}.
\end{split}
\end{equation}
Thus, it satisfies the weak incompressibility property \eqref{eq:weak-incompressibility}:
$$\forall q \in\mathbb{P}_0, \intd \div (\PhRTO\brk{\velhn}) q = 0\,,$$
if, and only if, the velocity field $\velhn$ also satisfies it.

Third, we define $\PhBDM$ as the Brezzi-Douglas-Marini projection operator \cite{brezzi-douglas-marini-85,brezzi-douglas-marini-86}. It is with value in $(\Po)^d$.
This projection operator satisfies the same divergence preservation
property \eqref{eq:div-conservation} than $\PhRTO$, but is of better accuracy. 

Note that $\PhBDM$ like $\PhRTO$ are local interpolating operators in the sense that all the computations can be made elementwise. This is not the case for $\Phrot$.

In addition, we will need the following lemma:
\begin{lemma}\label{lem:proj_div_0}
For any velocity field $\velhn$ such that the previously defined
interpolating operators are well defined, the normal components of the interpolated vector field, 
$\Phrot\brk{\velhn}\cdot\bn$, $\PhRTO\brk{\velhn}\cdot\bn$ and $\PhBDM\brk{\velhn}\cdot\bn$ are also well defined 
on any internal edges $E_j$.

Moreoever, if $\velhn \in (\Pod)^d$ is a velocity field such that, for all $k=1 \ldots N_K$:
$$ \int_{K_k}\div(\velhn)=0\,,$$ 
then $\div(\PhRTO\brk{\velhn})=\div(\PhBDM\brk{\velhn})=0$ (in the sense of distribution). 
\end{lemma}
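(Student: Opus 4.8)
The plan is to treat the two assertions in turn, in each case reducing everything to the defining properties of the three projection operators recalled above. For the first assertion there is nothing to prove for $\PhRTO$ and $\PhBDM$: the Raviart--Thomas and Brezzi--Douglas--Marini spaces are by construction subspaces of $H(\div,\D)$ whose elements have a normal trace that is single-valued across every internal edge $E_j$, so $\PhRTO(\velhn)\cdot\bn$ and $\PhBDM(\velhn)\cdot\bn$ are well defined on each $E_j$. For $\Phrot$, I would write $\Phrot(\velhn)=\nabla\times\psi_h$ with $\psi_h\in(\Po)^d$ \emph{globally continuous}, hence $\psi_h\in(H^1(\D))^d$, so that $\div\brk{\Phrot(\velhn)}=\div(\nabla\times\psi_h)=0$ in $\mathcal{D}'(\D)$. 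Since $\Phrot(\velhn)$ is in addition piecewise constant, its distributional divergence is simply the sum over the internal edges of the jumps $\jump{\Phrot(\velhn)\cdot\bn}$ tested against the surface measures of the $E_j$ (integration by parts element by element, in the spirit of~\eqref{eq:jump}); the vanishing of this distribution then forces $\jump{\Phrot(\velhn)\cdot\bn}=0$ on each $E_j$, which is exactly the required well-definedness.

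For the second assertion, I would start from the divergence-preservation identity~\eqref{eq:div-conservation}, which gives, for every element $K_k$,
\[
\int_{K_k}\div\brk{\PhRTO(\velhn)}=\int_{K_k}\div\velhn=0,
\]
and likewise with $\PhBDM$ in place of $\PhRTO$. On a given $K_k$, the field $\PhRTO(\velhn)$ belongs to $(\Pz)^d+\x\Pz$ and $\PhBDM(\velhn)$ to $(\Po)^d$, so in both cases its divergence is a \emph{constant} on $K_k$; having zero mean over $K_k$ by the line above, that constant must be $0$. Hence both divergences vanish elementwise. Finally, by the first part these two interpolated fields have single-valued normal component across the internal edges, so their distributional divergence carries no singular edge contribution and coincides with the (vanishing) elementwise divergence; we conclude $\div\brk{\PhRTO(\velhn)}=\div\brk{\PhBDM(\velhn)}=0$ in $\mathcal{D}'(\D)$.

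The auxiliary facts used here — that the divergence of a Raviart--Thomas or a Brezzi--Douglas--Marini field is elementwise constant, and that the distributional divergence of a piecewise constant field is supported on the mesh edges — are classical, so I do not anticipate a genuine obstacle. The one step that deserves attention is the passage from the elementwise identities to the distributional ones, which is precisely where the continuity of the normal component established in the first part intervenes.
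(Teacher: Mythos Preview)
Your proposal is correct and follows essentially the same approach as the paper: the normal-trace continuity for $\PhRTO$ and $\PhBDM$ is by construction, for $\Phrot$ it is deduced from the fact that a curl is divergence-free in the distributional sense, and the second assertion is obtained from~\eqref{eq:div-conservation} combined with the observation that the elementwise divergence lies in $\Pz$ and hence vanishes once its mean does. Your treatment of $\Phrot$ is more explicit than the paper's one-line justification, but the underlying argument is identical.
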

\begin{proof}
By construction, $\PhRTO$ and $\PhBDM$ are with value in velocity fields such that their normal component is 
continuous across the edges. 
This is also the case for $\Phrot$ since $\Phrot$ is with value in divergence free velocity fields.
Then, from the equation~\eqref{eq:div-conservation}, we have $\int_{K_k}\div(\PhRTO\brk{\velhn})=0$. 
Since $\div(\PhRTO\brk{\velhn})$ is in $(\Pz)^d$, this shows that $\div(\PhRTO\brk{\velhn})$ is zero in any element $K_k$. 
Finally, $\PhRTO\brk{\velhn}$ has continuous normal components across the edges of the mesh. 
This shows that $\div(\PhRTO\brk{\velhn})=0$ in the sense of distribution. 
The same proof holds for the projection operator $\PhBDM$.
\end{proof}

\subsubsection{Alternative mixed finite element space for $(\velh,\ph)$ with inf-sup condition}
\label{sec:P0-infsup}

In this section, we show how to derive discrete free energy estimates with mixed finite element spaces 
for the velocity and pressure fields which satisfy the inf-sup condition, 
but which are not the Scott-Vogelius finite elements.

Let us first consider the \emph{Taylor-Hood} element for $(\velh,\ph)$, that is $(\Pt)^d\times\Po$. In this case, since the velocity field $\velh$ is not divergence free either in the weak form \eqref{eq:weak-incompressibility}, or in the strong form \eqref{eq:strong-incompressibility}, a projection of the velocity field is required in the discretization of the advection terms $\vel \cdot \grad \strs$ and $\vel \cdot \grad \lstr$. More precisely, we need to use the projection velocity $\Phrot{\velh^n}$ (and, among the three projection operators we introduced above, this is the only one which is such that the strong or weak incompressibility is satisfied). 
For the \emph{characteristic} method, one uses the flow $X^n(t)$ satisfying:
\begin{equation} \label{eq:flow-P0-charac-HT}
\left\{
\begin{array}{l}
 \deriv{}{t} X^n(t,x) = \Phrot{\velh^n} ( X^n(t,x) ), \quad \forall t \in [t^n,t^{n+1}],\\
 X^n(t^{n+1},x) = x. 
\end{array}
\right.
\end{equation}
For the \emph{discontinuous Galerkin} method, the advection term in the conformation-tensor formulations writes
 (see the last line in~\eqref{eq:P0-DG}):
$$
+ \sum_{j=1}^{N_E} \int_{E_j} |\Phrot\brk{\velhn}\cdot\bn| \jump{\strhnp} :\bphi^+.
$$
Notice that in the terms $\jump{\strhnp} :\bphi^+$, 
the projected velocity $\Phrot{\velh^n}$ is used to define the upstream and downstream values following~\eqref{eq:sign-convention}. 
Another modification, which is specific to the Navier-Stokes equation, is needed to treat the advection term on the velocity.
Namely, one needs to add to the weak formulation the so-called Temam correction term (see~\cite{temam-66}):
\begin{equation}\label{eq:temam-term} 
 + \frac{\Re}{2} \intd \div\brk{\velhn}(\bv\cdot\velhnp) 
\end{equation}
in such a way that, when $\velhnp$ is used as a test function:
$$
\Re \intd \brk{ \velhn  \cdot \gvelhnp }\cdot \velhnp + \frac{\Re}{2} \intd \div\brk{\velhn} |\velhnp|^2 = 0.
$$
With these modifications (projection of the velocity field in the advection terms, and Temam correction terms), the free energy estimate~\eqref{eq:free-energy-P0-charac} is satisfied by the scheme. Similar results (discrete free energy estimates for $(\velh,\ph,\lstrh)$ in $(\Pt)^d\times\Po\times(\Pz)^\dd$) can be proved on the log-formulation.

Let us now discuss the use of \emph{Crouzeix-Raviart} finite elements for velocity: 
$(\velh,\ph,\strh)$ in $(\Po^{CR})^d\times\mathbb{P}_0\times(\Pz)^\dd$ (see~\eqref{eq:P1CR}). 
In this case, the Navier-Stokes equations can be discretized using a characteristic method:
\begin{equation} \label{eq:P0-charac-CR}
\begin{split}
 0 = \sum_{k=1}^{N_K} \int_{K_k} & \Re\brk{\frac{\velhnp-\velhn \circ X^n(t_n)}{\dt}}\cdot\bv\\ 
 &  - \phnp\div\bv + q\div \velhnp+ (1-\e)\gvelhnp:\grad\bv +\frac{\e}{\Wi}\strhnp:\grad\bv,
\end{split}
\end{equation}
where $X^n$ is obtained from the projected velocity field $\Ph{\velh^n}$ as:
\begin{equation} \label{eq:flow-P0-charac-CR}
\left\{
\begin{array}{l}
 \deriv{}{t} X^n(t) = \Ph{\velh^n} ( X^n(t) ), \quad \forall t \in [t^n,t^{n+1}],\\
 X^n(t^{n+1}) = x.
\end{array}
\right.
\end{equation}
The projected velocity $\Ph{\velh^n}$ is defined using any of the three projectors presented above, 
that is $\Phrot{\velh^n}$, $\PhRTO{\velh^n}$ or $\PhBDM{\velh^n}$.  
The Navier-Stokes equations can also be discretized using a \emph{discontinuous Galerkin} formulation:
\begin{equation} \label{eq:P0-DG-CR}
\begin{split}
0 = & \sum_{k=1}^{N_K} \int_{K_k} \Re\brk{\frac{\velhnp-\velhn}{\dt} + \Ph\brk{\velhn}\cdot\gvelhnp}\cdot\bv 
 + \Re \sum_{j=1}^{N_E} \int_{E_j} \Abs{\Ph\brk{\velhn}\cdot\bn} \jump{\velhnp}\cdot\BRK{\bv}\\
& + \sum_{k=1}^{N_K} \int_{K_k} 
- \phnp\div\bv + q\div \velhnp+ (1-\e)\gvelhnp:\grad\bv
  +\frac{\e}{\Wi}\strhnp:\grad\bv.
\end{split}
\end{equation}
Here again, $\Ph{\velh^n}$ is any of the three projectors presented above. We would like to mention that we are not aware that the projector $\Phrot$ has ever been used with discontinuous Galerkin methods,
so that the consistency of the discontinuous Galerkin approach combined
with this projector still needs to be investigated.

Likewise, the advection term $\vel \cdot \grad \strs$ in the equation on the stress can be treated by the characteristic method or the discontinuous Galerkin method,
as above for the advection term in the Navier-Stokes equations.

Notice that whatever the projecting operator used, $\div(\Ph{\velh^n})=0$ holds (see Lemma~\ref{lem:proj_div_0} above). With this property, it is easy to check that Propositions~\ref{prop:P0-charac} and~\ref{prop:P0-DG} still hold for this finite element. For example, the advection term in the Navier-Stokes equations is treated as follows (using the fact that $\div(\Ph\brk{\velhn})=0$ and~\eqref{eq:jump}):
\[
\begin{split}
\sum_{k=1}^{N_K} \int_{K_k} & \brk{\Ph\brk{\velhn}\cdot\gvelhnp}\cdot\velhnp + \sum_{j=1}^{N_E} \int_{E_j} \Abs{\Ph\brk{\velhn}\cdot\bn} \jump{\velhnp}\cdot\BRK{\velhnp}
\\
& =
\sum_{k=1}^{N_K} \int_{K_k} \div\brk{ \Ph\brk{\velhn} \frac{|\velhnp|^2}{2} } + \sum_{j=1}^{N_E} \int_{E_j} \Abs{\Ph\brk{\velhn}\cdot\bn} \frac{1}{2} \jump{ \,|\velhnp|^2\,},
\\
& =
\sum_{k=1}^{N_K} \int_{K_k} \div\brk{ \Ph\brk{\velhn} \frac{|\velhnp|^2}{2} }  - \sum_{k=1}^{N_K} \int_{\partial K_k} \brk{\Ph\brk{\velhn}\cdot\bn_{K_k}} \frac{|\velhnp|^2}{2}, 
\\
&= 0.
\end{split}
\]
Discrete free energy estimates for $(\velh,\ph,\lstrh)$ in $(\Po^{CR})^d\times\mathbb{P}_0\times(\Pz)^\dd$
can be similarly proven on the log-formulation.

\subsubsection{Alternative mixed finite element space for $(\velh,\ph)$ without inf-sup}
\label{sec:P0-noinfsup}

It is also possible to choose a mixed finite elements space for $(\velh,\ph)$
that does not satisfy the Babu\v{s}ka-Brezzi inf-sup condition,
like $(\mathbb{P}_{1})^d\times\mathbb{P}_0$ or $(\mathbb{P}_{1})^d\times\Po$.
The loss of stability due to the spaces' incompatibility
can then be alleviated through a stabilization procedure,
like Streamline Upwind Petrov Galerkin, Galerkin Least Square or Subgrid Scale Method (see~\cite{hughes-franca-87,codina-98,guermond-99}). In the following, we consider very simple stabilization procedures,
for which only one simple quadratic term is added to the variational finite element formulation
in order to restore stability of the discrete numerical scheme.

Let us first consider {\em the mixed finite element space $(\mathbb{P}_{1})^d\times\mathbb{P}_0$} for $(\velh,\ph)$.
If the term $\vel \cdot \grad \strs$ is discretized with the {\em characteristic} method, the system then writes:
\begin{equation} \label{eq:P0-charac-stab}
\begin{aligned}
0 =  \sum_{k=1}^{N_K} & \int_{K_k} \Re\brk{\frac{\velhnp-\velhn}{\dt} +
\velhn\cdot\gvelhnp}\cdot\bv 
 + \frac{\Re}{2} \div \velhn (\bv\cdot\velhnp)
\\
& \quad - \phnp\div\bv + q\div \velhnp+ (1-\e)\gvelhnp:\grad\bv
  +\frac{\e}{\Wi}\strhnp: \grad\bv
\\
& \quad + \brk{  \frac{\strhnp-{\strhn \circ X^n(t^n)} }{\dt} }:\bphi
- \brk{\brk{\gvelhnp}\strhnp+\strhnp\brk{\gvelhnp}^T}:\bphi
\\
& \quad + \frac{1}{\Wi}(\strhnp-\I):\bphi
\\
 + & \sum_{j=1}^{N_E} |E_j| \int_{E_j} \jump{\ph}\jump{q}\ ,
\end{aligned}
\end{equation}
with a flow $X^n$ computed with the projected field $\Phrot\brk{\velhn}$ 
through \eqref{eq:flow-P0-charac-HT}. The projection operator $\Phrot$ is the only one we can use among the three projectors we introduced in Section~\ref{sec:projections} because the weak incompressibility property \eqref{eq:weak-incompressibility} is not satisfied by $\velhn$.

The stabilization procedure  used in~\eqref{eq:P0-charac-stab} 
has been studied in~\cite{kechkar-silvester-92}.
Proposition \ref{prop:P0-charac} holds for system \eqref{eq:P0-charac-stab}, 
its proof being fully similar to the case of Taylor-Hood finite element (see Section~\ref{sec:P0-infsup}),
since the additional term $\sum_{j=1}^{N_E} |E_j| \int_{E_j}
\jump{\ph}\jump{q}$ is non negative with the test function used in the proof. All this also holds {\em mutatis mutandis} for discretization of the advection terms by a {\em discontinuous Galerkin} method, and for the log-formulation.

Let us finally consider {\em the mixed finite elements space $(\mathbb{P}_{1})^d\times\Po$} for $(\velh,\ph)$. If the term $\vel \cdot \grad \strs$ is discretized with the characteristic method, the system then writes:
\begin{equation} \label{eq:P0-charac-stab-bis}
\begin{aligned}
0 =  \intd & \Re\brk{\frac{\velhnp-\velhn}{\dt} +
\velhn\cdot\gvelhnp}\cdot\bv 
 + \frac{\Re}{2} \div \velhn (\bv\cdot\velhnp)
\\
& - \phnp\div\bv + q\div \velhnp+ (1-\e)\gvelhnp:\grad\bv
  +\frac{\e}{\Wi}\strhnp: \grad\bv
\\
& + \brk{  \frac{\strhnp-{\strhn \circ X^n(t^n)} }{\dt} }:\bphi
- \brk{\brk{\gvelhnp}\strhnp+\strhnp\brk{\gvelhnp}^T}:\bphi
\\
&+ \frac{1}{\Wi}(\strhnp-\I):\bphi
\\
 + &\sum_{k=1}^{N_K} h_{K_k}^2 \int_{K_k} \nabla\ph\cdot\nabla{q}\ ,
\end{aligned}
\end{equation}
with a flow $X^n$ again computed with the projected field $\Phrot\brk{\velhn}$ 
through \eqref{eq:flow-P0-charac-HT}.  Again, we are led to choose the projection operator $\Phrot$ because the weak incompressibility property \eqref{eq:weak-incompressibility} is not satisfied by $\velhn$. The stabilization procedure  used in~\eqref{eq:P0-charac-stab-bis} has been studied in~\cite{brezzi-pitkaranta-84}. Proposition~\ref{prop:P0-charac} holds for system~\eqref{eq:P0-charac-stab-bis}, 
its proof being fully similar to the case of Taylor-Hood finite element (see Section~\ref{sec:P0-infsup}),
since the additional term $\sum_{k=1}^{N_K} h_{K_k}^2 \int_{K_k}
\nabla\ph\cdot\nabla{q}$ is non negative with the test function used in the proof. All this also holds {\em mutadis mutandis} for discretization of the advection terms by a {\em discontinuous Galerkin} method, and for the log-formulation.

\begin{table}[!h]
\begin{center}
\begin{tabular}[t]{|p{4.5cm}|p{4.5cm}|}
\hline
$\bullet$ Advection discretized by: \newline
$\bullet (\velh,\ph)$ in $\dots$ ,
& Characteristics or DG. \newline $\Rightarrow$ equations modified:
\\
\hline \hline
Scott-Vogelius \newline
$(\Pt)^d\times\Pod$
& (nothing)
\\
\hline
Taylor-Hood \newline
$(\Pt)^d\times\mathbb{P}_{1}$
&
+ $\Phrot$\newline
+ Temam term
\\
\hline
Crouzeix-Raviart \newline
$(\Po^{CR})^d\times\mathbb{P}_{0}$
&
+ $\PhBDM$, $\PhRTO$ or $\Phrot$\newline 
+ $\Ph\brk{\velhn}$ for Navier term
\\
\hline
stabilized $(\Po)^d\times\mathbb{P}_{1}$
&
+ $\Phrot$ \newline
+ Temam term
\\
\hline
stabilized $(\Po)^d\times\mathbb{P}_{0}$
&
+ $\Phrot$\newline
+ Temam term
\\
\hline
\end{tabular}
\caption{\label{tab:finite element-summary} 
Summary of some possible finite elements for $(\velh,\ph,\strh/\lstrh)$
when $\strh/\lstrh \in (\Pz)^\dd$, 
with some possible projections for the velocity field (see Section~\ref{sec:need}).}
\end{center}
\end{table}

\section{Higher order discretization of the stress fields $\strh$ and $\lstrh$}
\label{sec:P1}

We now show how to build numerical schemes with higher order discretization spaces for the stress
that still satisfy a discrete free energy estimate. 
We typically have in mind piecewise linear spaces for $\strh$ and $\lstrh$.

{F}rom the previous proofs establishing discrete free energy estimates, 
it is clear that we need to use nonlinear functionals
of $\strh$ and $\lstrh$ as test functions, namely $\strh^{-1}$ and $\elstrh$. 
Finite element spaces other than $\Pz$ are typically not invariant under such nonlinear functionals, 
and this brings us to introduce projections of these nonlinear terms on $\Pz$.
A $\Pz$-Lagrange interpolation operator $\pi_h$ is convenient, because it commutes with nonlinear functionals 
(see Lemma~\ref{lemma:commute} below).

Choosing a $\Pz$-Lagrange interpolation operator for test functions will also have another important consequence,
because it will require that $\Pz$ be a subspace of the finite element space used to discretize the stress.
And our proofs establishing discrete free energy estimates will use in a crucial way the fact that
the interpolation operator coincides with a $L^2$ orthogonal projection onto $\Pz$ 
(see Lemma~\ref{lemma:lagrange} below).
The need for $\pi_h$ to coincide with a $L^2$ orthogonal projection onto $\Pz$
also {\it a priori} limits the maximum regularity of the discretization of the stress,
essentially to piecewise $\Po$ finite elements. Therefore, we consider $\strh$ and $\lstrh$ 
in either of the following finite element spaces\footnote{Note that, clearly, $(\Po + \Pz)^\dd$ is only a subspace of $(\Pod)^\dd$.}:
$$(\Po + \Pz)^\dd\text{   or   }(\Pod)^\dd\,.$$

In Section~\ref{sec:pih}, we introduce the interpolation operator $\pi_h$. 
Then we prove that, for a Scott Vogelius discretization of the velocity-pressure field, 
a free energy estimate can be obtained for discretization schemes close to those considered in Section~\ref{sec:P0},
when $\strh$ (respectively $\lstrh$) is in $(\mathbb{P}_{0})^\dd$. 
This is the purpose of the Section~\ref{sec:P1-strh} (respectively Section~\ref{sec:log-P1-strh}). 
Finally, we show in Section~\ref{sec:P1-need} how these results can be extended to 
other finite element discretizations of the velocity-pressure field.

\subsection{The interpolation operator $\pi_h$}\label{sec:pih}

Let us introduce the projection operator $\pi_h$ as the $\Pz$ Lagrange interpolation at barycenter
$\theta_{K_k}$ for each $K_k \in {\cal T}_h$. 
\begin{definition} \label{def:interpolation}
For $k=1\dots N_K$, we denote by $\theta_{K_k}$ the barycenter of the triangle $K_k$.
For any $\bphi$ such that $ \forall k=1\dots N_K$, $\bphi(\theta_{K_k})$ is well-defined (for example $\phi$ is a  tensor-valued function, continuous at points $\theta_{K_k}$), 
we define its piecewise constant interpolation by :
$$ \forall k=1\dots N_K\ , \pih{\bphi}|_{K_k} = \bphi(\theta_{K_k}).$$
\end{definition}
Notice that this definition also makes sense for the case in which $\bphi$ is
matrix-valued. And this interpolation operator $\pi_h$ coincides with the $L^2$ orthogonal 
projection from $(\Pod)^\dd$ onto $(\Pz)^\dd$,
as shown in the following Lemma proved in Appendix \ref{ap:lagrange}:
\begin{lemma}\label{lemma:lagrange}
Let $\pi_h$ be the interpolation operator introduced in Definition~\ref{def:interpolation}. Then, for any $\bphi_h \in (\Pod)^\dd$, we have :
\[
\intd \bphi_h : \tilde{\bphi_h} = \intd \pih{\bphi_h} : \tilde{\bphi_h},\ 
\forall \tilde{\bphi_h} \in (\Pz)^\dd.
\]
\end{lemma}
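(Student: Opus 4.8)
The plan is to reduce the whole statement to one elementary quadrature fact: the one-point barycentric rule is exact on affine functions, i.e.\ if $p$ is affine on a simplex $K\subset\R^d$ with barycenter $\theta_K$, then $\int_K p = |K|\,p(\theta_K)$. I would establish this first by writing $p(\x)=p(\theta_K)+\grad p\cdot(\x-\theta_K)$ and integrating over $K$, using $\int_K(\x-\theta_K)\,d\x = 0$, which is precisely the defining property of the barycenter. This is the only place where the specific choice of interpolation node enters.

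Next I would localize the two integrals. Since $\tilde{\bphi_h}\in(\Pz)^\dd$ is piecewise constant, write $\tilde{\bphi_h}|_{K_k}=C_k$ for a fixed matrix $C_k\in\R^{d\times d}$ on each element $K_k$, so that, the inner integral below being understood entrywise,
\[
\intd \bphi_h:\tilde{\bphi_h} = \sum_{k=1}^{N_K}\int_{K_k}\bphi_h:C_k = \sum_{k=1}^{N_K} C_k:\int_{K_k}\bphi_h.
\]
On each $K_k$, every component of $\bphi_h\in(\Pod)^\dd$ is an affine function, so the quadrature identity applied entrywise gives $\int_{K_k}\bphi_h = |K_k|\,\bphi_h(\theta_{K_k}) = |K_k|\,\pih{\bphi_h}|_{K_k}$ by Definition~\ref{def:interpolation}. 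Substituting back,
\[
\intd \bphi_h:\tilde{\bphi_h} = \sum_{k=1}^{N_K}|K_k|\,C_k:\pih{\bphi_h}|_{K_k} = \sum_{k=1}^{N_K}\int_{K_k}\pih{\bphi_h}:\tilde{\bphi_h} = \intd\pih{\bphi_h}:\tilde{\bphi_h},
\]
which is the assertion.

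I do not expect any genuine obstacle here; the only point deserving care is checking that the barycentric rule is exact on $\Po$ and not merely on $\Pz$, since that is exactly what makes the barycenter (rather than any other node) the right interpolation point. Finally I would note that the identity just proved says precisely that $\bphi_h-\pih{\bphi_h}$ is $L^2(\D)$-orthogonal to $(\Pz)^\dd$, and since $\pih{\bphi_h}\in(\Pz)^\dd$ this identifies $\pi_h$ with the $L^2$-orthogonal projection onto $(\Pz)^\dd$, as stated in the text preceding the lemma.
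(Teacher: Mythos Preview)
Your proof is correct and follows essentially the same approach as the paper: both localize to each element, pull out the piecewise-constant test function, and reduce to the fact that the one-point barycentric quadrature is exact on affine functions. The only cosmetic difference is that the paper establishes this last fact via the nodal $\Po$ basis (writing $\int_{K_k}\psi_i=|K_k|/(d+1)$ and then using affinity to identify the vertex average with the value at the barycenter), whereas you obtain it more directly and dimension-independently from $\int_K(\x-\theta_K)\,d\x=0$.
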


In addition, the following property holds, which is important in
the choice of this particular interpolation:
\begin{lemma}
\label{lemma:commute}
Let $\pi_h$ be the interpolation operator introduced in Definition~\ref{def:interpolation}.
The interpolation operator $\pi_h$ commutes with any function $f$: for any functions $f$ and $\bphi_h$ such that $\bphi_h$ and  $f(\bphi_h)$ are well-defined at the barycenters $\theta_k$,
\[
\pih{f(\bphi_h)} = f(\pih{\bphi_h}).
\]
\end{lemma}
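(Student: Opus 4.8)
The plan is to exploit the fact that $\pi_h$ is defined \emph{pointwise} at the barycenters. First I would fix an element $K_k \in \mathcal{T}_h$ and restrict attention to it, since both $\pi_h(f(\bphi_h))$ and $f(\pi_h(\bphi_h))$ are piecewise constant on $\mathcal{T}_h$ and it suffices to check they coincide elementwise.

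On $K_k$, by Definition~\ref{def:interpolation} we have $\pi_h(\bphi_h)|_{K_k} = \bphi_h(\theta_{K_k})$, a fixed (matrix) value, so $f(\pi_h(\bphi_h))|_{K_k} = f(\bphi_h(\theta_{K_k}))$. On the other hand, $g := f(\bphi_h)$ is a function on $K_k$ whose value at the barycenter is $g(\theta_{K_k}) = f(\bphi_h(\theta_{K_k}))$ (here we use that $\bphi_h$ and $f(\bphi_h)$ are well-defined at $\theta_{K_k}$, which is part of the hypothesis). Applying Definition~\ref{def:interpolation} to $g$ gives $\pi_h(f(\bphi_h))|_{K_k} = \pi_h(g)|_{K_k} = g(\theta_{K_k}) = f(\bphi_h(\theta_{K_k}))$. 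The two expressions agree on $K_k$, and since $K_k$ was arbitrary, the identity $\pi_h(f(\bphi_h)) = f(\pi_h(\bphi_h))$ holds on all of $\D$.

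There is no real obstacle here: the statement is essentially a tautology once one notes that $\pi_h$ acts by evaluation at a point, and evaluation-at-a-point obviously commutes with composition by $f$. The only thing worth spelling out is the compatibility of the domains of definition, i.e.\ that the hypothesis ``$\bphi_h$ and $f(\bphi_h)$ are well-defined at the barycenters $\theta_k$'' is exactly what is needed for both sides of the claimed equality to make sense; this is why the lemma is stated with that caveat rather than assuming, say, continuity of $f$. (In the applications $f$ will be $\strs \mapsto \strs^{-1}$ or $\strs \mapsto e^{\strs}$, which are well-defined on the relevant sets of symmetric positive definite matrices, so the hypothesis is harmless.) I would also remark in passing that Lemma~\ref{lemma:lagrange} is logically independent of this argument and is the place where the $L^2$-orthogonality --- rather than mere pointwise interpolation --- actually enters, so no circularity arises from using these two lemmas together later.
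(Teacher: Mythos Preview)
Your argument is correct and matches the paper's own proof, which simply remarks that the result is immediate since $\pi_h$ uses only pointwise values at the barycenters $\theta_{K_k}$. Your write-up just unpacks this one-line observation elementwise.
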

The proof of Lemma \ref{lemma:commute} is straightforward
since, by Definition \ref{def:interpolation}, the interpolation $\pi_h$ only uses specific values at fixed points in the spatial domain $\D$.

\subsection{Free energy estimates with discontinuous piecewise linear $\strh$}\label{sec:P1-strh}

In this section, we consider the following finite element discretization:
Scott-Vogelius $(\Pt)^d\times\Pod$ for $(\velh,\ph)$ and $(\Pod)^\dd$ or $(\Po+\Pz)^\dd$ for $\strh$.

\subsubsection{The characteristic method}
\label{sec:P1-charac}

If the advection term $\vel \cdot \grad \strs$ is discretized by the {\em characteristic} method, the system writes:
\begin{equation} \label{eq:P1-charac}
\begin{split}
0 =  \intd & \Re\brk{\frac{\velhnp-\velhn}{\dt} +
\velhn\cdot\gvelhnp}\cdot\bv 
\\
& - \phnp\div\bv + q\div \velhnp+ (1-\e)\gvelhnp:\grad\bv
  +\frac{\e}{\Wi}\pih{\strhnp}:\grad\bv
\\
&+ \brk{  \frac{\strhnp-\pih{\strhn} \circ X^n(t^n) }{\dt} }:\bphi
- ( \gvelhnp \pih{\strhnp} +\pih{\strhnp} (\gvelhnp)^T ):\bphi
\\
&+ \frac{1}{\Wi}(\strhnp-\I):\bphi,
\end{split}
\end{equation}
where $X^n$ is defined as in \eqref{eq:flow-P0-charac}. Notice that we
have used the projection operator $\pi_h$ in four terms. It will become clearer from the proof of the free energy estimate below why those projections are needed.

\begin{proposition} \label{prop:P1-charac}
Let $(\velhn,\phn,\strhn)_{0 \le n \le N_T}$ be solution to~\eqref{eq:P1-charac}, such that $\pih{\strh^n}$ is positive definite.
Then, the free energy of the solution $(\velhn,\phn,\strhn)$:
\begin{equation} 
F_h^n=F(\velhn,\pih{\strhn}) = \frac{\Re}{2}\intd|\velhn|^2
+\frac{\e}{2 \Wi}\intd\tr\brk{\pih{\strhn}-\ln\pih{\strhn}-\I}\,,
\end{equation}
satisfies:
\begin{equation} \label{eq:free-energy-P1-charac}
\begin{split}
F_h^{n+1} - F_h^n & + \intd \frac{\Re}{2} |\velhnp-\velhn|^2 \\
& + \dt \intd (1-\e)|\gvelhnp|^2 + \frac{\e}{2 {\Wi}^2} \tr\brk{\pih{\strhnp}+\pih{\strhnp}^{-1}-2\I} \le 0.
\end{split}
\end{equation}
In particular, the sequence $(F_h^n)_{0 \le n \le N_T}$ is non-increasing.
\end{proposition}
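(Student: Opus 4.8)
The plan is to follow the proof of Proposition~\ref{prop:P0-charac} almost verbatim, the only genuinely new ingredient being Lemma~\ref{lemma:lagrange}: because the stress test function will be piecewise constant, any $L^2$ pairing of $\strhnp$ against it equals the pairing of $\pih{\strhnp}$ against it. First I would take $\brk{\velhnp,\phnp,\frac{\e}{2\Wi}\brk{\I-\pih{\strhnp}^{-1}}}$ as test function in~\eqref{eq:P1-charac}. This is admissible: by hypothesis $\pih{\strhnp}\in(\Pz)^\dd$ is symmetric positive definite, so $\pih{\strhnp}^{-1}$ is again a symmetric piecewise constant tensor, i.e.\ an element of $(\Pz)^\dd$, which is contained both in $(\Pod)^\dd$ and in $(\Po+\Pz)^\dd$.

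The velocity--pressure block is handled exactly as in Proposition~\ref{prop:P0-charac}: the Scott--Vogelius velocity satisfies $\div\velhnp=0$ pointwise, so the pressure terms cancel, the convection term vanishes after integration by parts (using $\div\velhn=0$ and the no-slip condition), the discrete time derivative produces $\frac{\Re}{2\dt}\intd\brk{|\velhnp|^2-|\velhn|^2+|\velhnp-\velhn|^2}$, and the viscous term gives $(1-\e)\intd|\gvelhnp|^2$. For the stress block, I would treat the upper-convective and extra-stress terms first: in~\eqref{eq:P1-charac} the factor $\pih{\strhnp}$ already appears there, so the pointwise identity $-\brk{\gvelhnp\,\pih{\strhnp}+\pih{\strhnp}(\gvelhnp)^T}:\brk{\I-\pih{\strhnp}^{-1}}=-2\,\pih{\strhnp}:\gvelhnp+2\div\velhnp$ is exactly the one used in the $(\Pz)^\dd$ case; multiplied by $\frac{\e}{2\Wi}$ and integrated it cancels the extra-stress term $\frac{\e}{\Wi}\intd\pih{\strhnp}:\gvelhnp$ and leaves $\frac{\e}{\Wi}\intd\div\velhnp=0$. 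In the relaxation term $\frac1{\Wi}(\strhnp-\I):\bphi$ and in the $\strhnp$ occurring in the discrete material derivative, the test function $\bphi=\frac{\e}{2\Wi}\brk{\I-\pih{\strhnp}^{-1}}$ is piecewise constant, so Lemma~\ref{lemma:lagrange} lets me replace $\strhnp$ by $\pih{\strhnp}$; the relaxation term then contributes exactly $\frac{\e}{2\Wi^2}\intd\tr\brk{\pih{\strhnp}+\pih{\strhnp}^{-1}-2\I}$, via $(\pih{\strhnp}-\I):\brk{\I-\pih{\strhnp}^{-1}}=\tr\brk{\pih{\strhnp}+\pih{\strhnp}^{-1}-2\I}$.

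The remaining term is the transported part $\intd\brk{\pih{\strhnp}-\pih{\strhn}\circ X^n(t^n)}:\brk{\I-\pih{\strhnp}^{-1}}$ (the first summand rewritten once more with Lemma~\ref{lemma:lagrange}). Setting $P=\pih{\strhnp}$ and $Q=\pih{\strhn}\circ X^n(t^n)$, both pointwise symmetric positive definite, I would expand $(P-Q):(\I-P^{-1})=\tr(P)-\tr(Q)+\tr(QP^{-1}-\I)$ and apply~\eqref{eq:traceAB} pointwise with $\strs=Q$, $\str=P$ to get the lower bound $\tr(P-\ln P)-\tr(Q-\ln Q)$. Since the matrix logarithm acts pointwise, $\ln Q=(\ln\pih{\strhn})\circ X^n(t^n)$, and since $\div\velhn=0$ the flow $X^n(t^n,\cdot)$ is a Jacobian-one self-map of $\D$, so the change of variables $y=X^n(t^n,x)$ gives $\intd\tr(Q-\ln Q)=\intd\tr\brk{\pih{\strhn}-\ln\pih{\strhn}}$, exactly as in Proposition~\ref{prop:P0-charac}. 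Assembling all contributions in the identity coming from the variational formulation, noting that $\tr(-\I)$ cancels in the difference $F_h^{n+1}-F_h^n$, and multiplying by $\dt>0$ yields~\eqref{eq:free-energy-P1-charac}; the monotonicity of $(F_h^n)$ then follows from $\tr\brk{\pih{\strhnp}+\pih{\strhnp}^{-1}-2\I}\ge0$, which is~\eqref{eq:tr-plus_inv}.

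I expect no serious obstacle: the proof is essentially the bookkeeping of checking that every occurrence of $\strhnp$ in~\eqref{eq:P1-charac} is either pre-composed with $\pi_h$ (extra-stress and upper-convective terms) or paired in $L^2$ against the piecewise-constant stress test function (relaxation and material-derivative terms), which is precisely why $\pi_h$ has been inserted in the four indicated places of~\eqref{eq:P1-charac}. The one point deserving care is that $Q=\pih{\strhn}\circ X^n(t^n)$ is piecewise constant with respect to the pulled-back mesh, not with respect to $\mathcal{T}_h$, so~\eqref{eq:traceAB} must be invoked pointwise rather than cellwise before the change of variables; and, as in the continuous and $(\Pz)^\dd$ cases, no positivity of $\strhnp$ itself is needed, only of its barycentric interpolant $\pih{\strhnp}$.
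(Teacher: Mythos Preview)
Your proposal is correct and follows essentially the same approach as the paper: you use the same test function $\brk{\velhnp,\phnp,\frac{\e}{2\Wi}(\I-\pih{\strhnp}^{-1})}$, invoke Lemma~\ref{lemma:lagrange} to replace $\strhnp$ by $\pih{\strhnp}$ in the relaxation and material-derivative terms, apply~\eqref{eq:traceAB} pointwise, and use the Jacobian-one change of variables. Your remark that~\eqref{eq:traceAB} must be applied pointwise because $\pih{\strhn}\circ X^n(t^n)$ is constant on the pulled-back mesh rather than on $\mathcal{T}_h$ is a helpful clarification that the paper leaves implicit.
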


\begin{remark}
The ensemble of symmetric positive definite matrices is convex. This implies that a piecewise linear tensor field is symmetric positive definite as soon as it is symmetric positive definite at the nodes of the mesh. Moreover, this also implies that $\pih{\strh}$ is symmetric positive definite as soon as $\strh$ is a piecewise linear (possibly discontinuous) symmetric positive definite tensor field.
\end{remark}
\begin{remark}
It is easy to extend the result of Proposition~\ref{prop:existence} to show that,
if $\dt>0$ is small enough and $\strhn$ is positive definite, 
then there exists a unique solution to~\eqref{eq:P1-charac} with $\strhnp$ positive definite. 
\end{remark}

\begin{proof}[Proof of Proposition \ref{prop:P1-charac}]
The test functions we choose are $\left(\velhnp,\phnp,\frac{\e}{2 \Wi}(\I - \pih{\strhnp}^{-1}\right)$. Recall that by Lemma~\ref{lemma:commute}, $\left(\pih{\strhnp}\right)^{-1}=\pih{(\strhnp)^{-1}}$. The proof is similar to the one of Proposition \ref{prop:P0-charac} except in the treatment of the 
constitutive equation.

The upper-convective term in the tensor derivative writes (using Lemma~\ref{lemma:lagrange} and the incompressibility property~\eqref{eq:divu=0}):
\begin{align*}
\intd \gvelhnp\pih{\strhnp}:(\I - \pih{\strhnp}^{-1}) 
= & \intd \pih{\strhnp} : \gvelhnp  \\ 
& \quad - \intd \gvelhnp\pih{\strhnp}:\pih{\strhnp}^{-1},\\
= & \intd \pih{\strhnp} : \gvelhnp -\intd \gvelhnp:\I, \\
= & \intd \pih{\strhnp} : \gvelhnp -\intd \div \velhnp, \\
= & \intd \pih{\strhnp} : \gvelhnp,
\end{align*}
which vanishes after combination with the extra-stress term in the momentum equation.

The last term rewrites (using again Lemma~\ref{lemma:lagrange}):
\[
\intd (\strhnp-\I):(\I-\pih{\strhnp}^{-1}) = \intd \tr(\pih{\strhnp}+\pih{\strhnp}^{-1}-2\I).
\]

The remaining term writes (using Lemma~\ref{lemma:lagrange}, the equation~\eqref{eq:traceAB} with $\strs=\pih{ \strhn }\circ X^n(t^n)$ and $\str=\pih{\strhnp}$, and the fact that the Jacobian of $X^n$ remains equal to one due to the incompressibility property~\eqref{eq:divu=0}):
\[
\begin{split}
\intd & \brk{\strhnp-\pih{ \strhn }\circ X^n(t^n) }:(\I-\pih{\strhnp}^{-1}) \\
& = \intd \tr \strhnp  - \tr \pih{\strhn} \circ X^n(t^n)  
 + \tr \Big( \pih{\strhn} \circ X^n(t^n) \pih{\strhnp}^{-1} -\I \Big), \\
& \ge \intd \tr \strhnp  - \tr \pih{\strhn} \circ X^n(t^n)
 + \tr \ln \pih{\strhn} \circ X^n(t^n) - \tr \ln \pih{\strhnp}, \\
& = \intd \tr\pih{\strhnp} - \tr\pih{\strhn} + \tr \ln \pih{\strhn} - \tr \ln \pih{\strhnp}. 
\end{split}
\]
This completes the proof.
\end{proof}

\subsubsection{The discontinuous Galerkin method}
\label{sec:P1-DG}

If the advection term $\vel \cdot \grad \strs$ is discretized by the {\em discontinuous Galerkin} method, the system writes:
\begin{equation} \label{eq:P1-DG}
\begin{split}
0 =  \sum_{k=1}^{N_K} \int_{K_k} & \Re\brk{\frac{\velhnp-\velhn}{\dt} + {\velhn}\cdot\gvelhnp}\cdot\bv 
\\
& - \phnp\div\bv + q\div \velhnp+ (1-\e)\gvelhnp:\grad\bv
  +\frac{\e}{\Wi}\pih{\strhnp}:\grad\bv
\\
&+ \brk{  \frac{\strhnp- \strhn}{\dt} }:\bphi
- \brk{ \gvelhnp\pih{\strhnp} +\pih{\strhnp}(\gvelhnp)^T }:\bphi
\\
&+ \frac{1}{\Wi}(\strhnp-\I):\bphi
\\
+ \sum_{j=1}^{N_E} & \int_{E_j} \Abs{\velhn\cdot\bn} \jump{\pih{\strhnp}} : \bphi^+.
\end{split}
\end{equation}
As for the characteristic method, the projection operator $\pi_h$ is
used in four terms.
Besides, like in the case where $\strh \in (\Pz)^\dd$, the advection term $\vel \cdot \grad \strs$ is discretized using a jump term only. Indeed, in order to derive discrete free energy estimates, we treat the discrete advection term using the projection $\pih{\strh} \in (\Pz)^\dd$ of the stress field $\strh$,
the derivative of which is zero. 

Proposition \ref{prop:P1-charac} still holds for the system \eqref{eq:P1-DG}.
The proof is straightforward using all the arguments of the previous sections,
except for the treatment of the discrete advection term for $\vel \cdot \grad \strs$.
Using the equations~\eqref{eq:traceAB}, \eqref{eq:jump}, the fact
that $\pih{\strh^{n+1}} \in (\Pz)^\dd$ and the weak incompressibility
property \eqref{eq:weak-incompressibility}, we have:
\begin{align*}
\sum_{j=1}^{N_E} \int_{E_j} & \Abs{\velhn\cdot\bn} \jump{\pih{\strhnp}}\left(\I-\pih{\strhnp}^{-1}\right)^+, \\
= &
\sum_{j=1}^{N_E} \int_{E_j} \Abs{\velhn\cdot\bn} \jump{\tr\pih{\strhnp}}
+
 \Abs{\velhn\cdot\bn} \tr \Big( \pih{\strh^{n+1,-}} \pih{\strh^{n+1,+}}^{-1} - \I\Big) ,\\
\ge &
\sum_{j=1}^{N_E} \int_{E_j} \Abs{\velhn\cdot\bn} \jump{\tr\pih{\strhnp}}
+
 \Abs{\velhn\cdot\bn} \tr \brk{ \ln \pih{\strh^{n+1,-}} - \ln \pih{\strh^{n+1,+}} }, \\
= &
\sum_{j=1}^{N_E} \int_{E_j} \Abs{\velhn\cdot\bn} \jump{\tr( \pih{\strh^{n+1}} - \ln \pih{\strh^{n+1}} )}, \\
= &
\sum_{k=1}^{N_K} - \int_{\partial K_k} \brk{{\velhn}\cdot\bn_{K_k}} \tr \Big( \pih{\strh^{n+1}} - \ln \pih{\strh^{n+1}} \Big), \\
= &
\sum_{k=1}^{N_K} - \tr\Big(\pih{\strh^{n+1}}-\ln\pih{\strh^{n+1}}\Big)\Big|_{K_k} \int_{K_k} \div({\velhn})\,, \\ 
= & 0.
\end{align*}

\subsection{Free energy estimates with discontinuous piecewise linear $\lstrh$}\label{sec:log-P1-strh}

In the following section, we write free-energy-dissipative schemes using the log-formulation
with $\lstrh$ piecewise linear. For this, we again need the projection
operator $\pi_h$ introduced in Definition~\ref{def:interpolation}.
We consider the Scott-Vogelius finite element space for $(\velh,\ph)$ 
and the following decomposition of the velocity gradient $\gvel_h\in(\Pod)^\dd$:
\begin{equation}
\gvel_h = \Omh + \B_h + \Nh \pih{\elstrh}^{-1}.
\end{equation}
Notice that since $\pih{\elstrh}^{-1} = e^{-\pih{\lstrh}}$ is in $(\Pz)^\dd$,
we have $\Omh,\B_h,\Nh \in (\Pod)^\dd$ 
in virtue of Lemma~\ref{lemma:discrete-decomposition} with $k=1$.

\subsubsection{The characteristic method}
\label{sec:log-P1-charac}

If the advection term $\vel \cdot \grad \strs$ is discretized by the {\em characteristic} method, the system writes:
\begin{equation} \label{eq:log-P1-charac}
\begin{split}
0 = \intd & \Re\brk{\frac{\velhnp-\velhn}{\dt} 
+ \velhn\cdot\gvelhnp}\cdot\bv 
\\
& - \phnp\div\bv + q\div \velhnp+ (1-\e)\gvelhnp:\grad\bv
  +\frac{\e}{\Wi}\pih{\elstrhnp}:\grad\bv
\\
&+ \brk{  \frac{\lstrhnp-\pih{\lstrhn} \circ X^n(t^n) }{\dt} }:\bphi
- ( \Omh^{n+1}\pih{\lstrhnp} - \pih{\lstrhnp}\Omh^{n+1} ):\bphi
\\
& -2\B_h^{n+1}:\bphi - \frac{1}{\Wi}(\pih{\emstrhnp}-\I):\bphi.
\end{split}
\end{equation}
In the system above, we have used the projection operator $\pi_h$ to
treat the same terms as in the system~\eqref{eq:P1-charac}. But in
addition to these, we have also used the projection operator for the
exponential term $\emstrhnp$ in the Oldroyd-B equation. 

\begin{proposition} \label{prop:log-P1-charac}
Let $(\velhn,\phn,\lstrhn)_{0 \le n \le N_T}$ be solution to~\eqref{eq:log-P1-charac}.
Then, the free energy of the solution $(\velhn,\phn,\lstrhn)$:
\begin{equation} 
F_h^n=F\brk{\velhn,e^{\pih{\lstrhn}}}  = \frac{\Re}{2}\intd|\velh|^2 
+ \frac{\e}{2 \Wi}\intd\tr\brk{e^{\pih{\lstrhn}} - \pih{\lstrhn} - \I}\,,
\end{equation}
satisfies:
\begin{equation} \label{eq:free-energy-log-P1-charac}
\begin{split}
F_h^{n+1} - F_h^n & + \intd \frac{\Re}{2} |\velhnp-\velhn|^2 \\
& + \dt \intd (1-\e)|\gvelhnp|^2 + \frac{\e}{2 {\Wi}^2} \tr\brk{e^{\pih{\lstrhn}}+e^{-\pih{\lstrhn}}-2\I} \le 0.
\end{split}
\end{equation}
In particular, the sequence $(F_h^n)_{0 \le n \le N_T}$ is non-increasing.
\end{proposition}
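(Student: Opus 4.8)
The plan is to mimic the proof of Proposition~\ref{prop:log-P0-charac}, the only new ingredient being the interpolation operator $\pi_h$. I would use $\left(\velhnp,\phnp,\frac{\e}{2\Wi}\brk{\pih{\elstrhnp}-\I}\right)$ as test function in~\eqref{eq:log-P1-charac}, noting that $\pih{\elstrhnp}$ is piecewise constant and symmetric positive definite (the exponential of the symmetric matrix $\pih{\lstrhnp}$), that $\pih{\elstrhnp}=e^{\pih{\lstrhnp}}$ and $\pih{\emstrhnp}=e^{-\pih{\lstrhnp}}=\pih{\elstrhnp}^{-1}$ by Lemma~\ref{lemma:commute}, and that therefore the pointwise decomposition $\gvelhnp=\Omh^{n+1}+\B_h^{n+1}+\Nh^{n+1}\pih{\elstrhnp}^{-1}$ is available (Lemma~\ref{lemma:discrete-decomposition} with $k=1$: $\pih{\elstrhnp}^{-1}$ is piecewise constant and $\gvelhnp$ is piecewise linear). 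The velocity-pressure block is then handled verbatim as in Proposition~\ref{prop:log-P0-charac}: with $\bv=\velhnp$ and $q=\phnp$ the pressure terms cancel; $\intd\velhn\cdot\grad\frac{|\velhnp|^2}{2}=-\intd(\div\velhn)\frac{|\velhnp|^2}{2}=0$ by~\eqref{eq:divu=0} and the no-slip condition; the discrete time derivative yields $\frac{\Re}{2}\intd\brk{|\velhnp|^2-|\velhn|^2}+\frac{\Re}{2}\intd|\velhnp-\velhn|^2$ after multiplication by $\dt$; and the viscous term gives $\dt(1-\e)\intd|\gvelhnp|^2$.

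All the constitutive terms except the discrete material derivative reduce to pointwise identities. The rotation term $-(\Omh^{n+1}\pih{\lstrhnp}-\pih{\lstrhnp}\Omh^{n+1}):\bphi$ vanishes pointwise: its $\I$-part is the trace of a commutator, while its $\pih{\elstrhnp}$-part vanishes by~\eqref{eq:omegapsi0} since $\pih{\lstrhnp}$ commutes with $\pih{\elstrhnp}=e^{\pih{\lstrhnp}}$ — this is precisely why the projection is placed inside the commutator in~\eqref{eq:log-P1-charac}. For $-2\B_h^{n+1}:\bphi$ I would use, exactly as in~\eqref{eq:uB}, that $\B_h^{n+1}:\pih{\elstrhnp}=\gvelhnp:\pih{\elstrhnp}$ pointwise (the $\Omh^{n+1}$ and $\Nh^{n+1}\pih{\elstrhnp}^{-1}$ contributions to $\gvelhnp:\pih{\elstrhnp}$ vanish by (anti)symmetry) and that $\B_h^{n+1}:\I=\tr\B_h^{n+1}=\tr\gvelhnp=\div\velhnp=0$; hence this term equals $-\frac{\e}{\Wi}\intd\gvelhnp:\pih{\elstrhnp}$ and cancels the extra-stress contribution $\frac{\e}{\Wi}\intd\pih{\elstrhnp}:\grad\velhnp$ of the momentum equation. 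Finally, using $\pih{\emstrhnp}=\pih{\elstrhnp}^{-1}$, the reaction term $-\frac{1}{\Wi}\brk{\pih{\emstrhnp}-\I}:\bphi$ produces, after multiplication by $\dt$, the dissipative term $\dt\frac{\e}{2\Wi^2}\intd\tr\brk{\pih{\elstrhnp}+\pih{\elstrhnp}^{-1}-2\I}$.

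The heart of the proof is the material-derivative term $\frac{\e}{2\Wi}\intd\brk{\lstrhnp-\pih{\lstrhn}\circ X^n(t^n)}:\brk{\pih{\elstrhnp}-\I}$ (after multiplication by $\dt$). First I would invoke Lemma~\ref{lemma:lagrange} to replace $\lstrhnp$ by $\pih{\lstrhnp}$ wherever it is paired with the piecewise-constant test function — this is the one place where the identification of $\pi_h$ with the $L^2$-projection onto $(\Pz)^\dd$ is used. Splitting off the $\I$-part, I would then apply~\eqref{eq:traceAB+} pointwise with $\strs=\pih{\elstrhnp}=e^{\pih{\lstrhnp}}$ and $\str=e^{\pih{\lstrhn}\circ X^n(t^n)}=\pih{\elstrhn}\circ X^n(t^n)$ (this last equality again by Lemma~\ref{lemma:commute}) to obtain $\brk{\pih{\lstrhnp}-\pih{\lstrhn}\circ X^n(t^n)}:\pih{\elstrhnp}\ge\tr\brk{\pih{\elstrhnp}-\pih{\elstrhn}\circ X^n(t^n)}$ pointwise. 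Changing variables in the integrals carrying $X^n(t^n)$ — legitimate since $\div\velhn=0$ makes the backward flow of~\eqref{eq:flow-P0-charac} volume-preserving on $\D$ — this term is bounded below by $\frac{\e}{2\Wi}\intd\tr\brk{\pih{\elstrhnp}-\pih{\lstrhnp}-\I}-\frac{\e}{2\Wi}\intd\tr\brk{\pih{\elstrhn}-\pih{\lstrhn}-\I}$, i.e.\ by the increment of the entropic part of $F_h$.

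Collecting all contributions in the identity produced by~\eqref{eq:log-P1-charac} with this test function and rearranging yields~\eqref{eq:free-energy-log-P1-charac}, and the monotonicity of $(F_h^n)_n$ follows at once since $\tr\brk{\pih{\elstrhnp}+\pih{\elstrhnp}^{-1}-2\I}\ge0$ by~\eqref{eq:tr-plus_inv} (while each $F_h^n\ge0$ by~\eqref{eq:tr-ln} applied to the symmetric positive definite matrix $\pih{\elstrhn}$). The step requiring the most care is the bookkeeping with $\pi_h$: one must verify for the material-derivative term that Lemma~\ref{lemma:lagrange} genuinely applies (a piecewise-linear integrand tested against a piecewise-constant one), and for the remaining terms that the projections inserted in~\eqref{eq:log-P1-charac} make all cancellations and the inequality~\eqref{eq:traceAB+} bear on piecewise-constant, symmetric, positive definite tensors — exactly the strategy of Proposition~\ref{prop:P1-charac}. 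As in Proposition~\ref{prop:log-P0-charac}, and in contrast with the conformation-tensor formulation, no positivity assumption on $\lstrhn$ is needed, since $\pih{\elstrhnp}=e^{\pih{\lstrhnp}}$ is symmetric positive definite by construction.
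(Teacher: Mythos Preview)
Your proposal is correct and follows essentially the same approach as the paper's own proof: the same test function $\brk{\velhnp,\phnp,\frac{\e}{2\Wi}(\pih{\elstrhnp}-\I)}$, the same use of Lemma~\ref{lemma:commute} to identify $\pih{\elstrhnp}=e^{\pih{\lstrhnp}}$, Lemma~\ref{lemma:lagrange} to replace $\lstrhnp$ by $\pih{\lstrhnp}$ in the material-derivative term, the pointwise inequality~\eqref{eq:traceAB+}, the identities~\eqref{eq:omegapsi0} and~\eqref{eq:uB} for the rotation and $\B$ terms, and the unit-Jacobian change of variables for the backward flow. If anything, your write-up is slightly more explicit than the paper's in tracking where $\pi_h$ enters and why the pointwise arguments apply to piecewise-constant symmetric positive definite tensors.
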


\begin{proof}[Proof of Proposition~\ref{prop:log-P1-charac}]
The proof is similar to that of Proposition~\ref{prop:log-P0-charac} except
for the terms using the interpolation operator $\pi_h$. We shall use as test functions 
$\brk{\velhnp,\phnp,\frac{\e}{2\Wi}(\pih{\elstrhnp}-\I)}$ in~\eqref{eq:log-P0-charac}. 
And we will make use of the following property all along the proof (see Lemma~\ref{lemma:commute}):
$$ \pih{\elstrhnp}  =  e^{\pih{\lstrhnp}}  .$$

For the material derivative of $\lstrh$, 
using Lemma~\ref{lemma:lagrange},
the equation~\eqref{eq:traceAB+} with $\strs=\elstrhnp$ and $\str=e^{\lstrhn \circ X^n(t^n)}$,
and the fact that the Jacobian of the flow $X^n$ is one for divergence free velocity field $\velhn$,
we have:
\begin{align*}
 \intd & \brk{\lstrhnp-\pih{\lstrhn}\circ X^n(t^n)}:(\pih{\elstrhnp}-\I) \\
& = \intd  \brk{\pih{\lstrhnp}-\pih{\lstrhn}\circ X^n(t^n)}:e^{\pih{\lstrhnp}}
 - \tr\brk{\pih{\lstrhnp}-\pih{\lstrhn}\circ X^n(t^n)}, \\
& \ge \intd \tr\brk{e^{\pih{\lstrhnp}}-\pih{\lstrhnp}}
 - \intd \tr\brk{e^{\pih{\lstrhn}}-\pih{\lstrhn}}\circ X^n(t^n),\\
& = \intd \tr\brk{e^{\pih{\lstrhnp}}-\pih{\lstrhnp}} 
 - \intd \tr\brk{e^{\pih{\lstrhn}}-\pih{\lstrhn}}.
\end{align*}

Besides, using the equation~\eqref{eq:omegapsi0}, we have:
\begin{align*}
\intd & ( \Omh^{n+1}\pih{\lstrhnp} - \pih{\lstrhnp}\Omh^{n+1} ):(e^{\pih{\lstrhnp}} -\I) \\
& = \intd ( \Omh^{n+1}\pih{\lstrhnp} - \pih{\lstrhnp}\Omh^{n+1} ):e^{\pih{\lstrhnp}},\\ 
& = 0,
\end{align*}
and using the equations~\eqref{eq:uB} and~\eqref{eq:divu=0}:
\begin{align*}
\intd \B_h^{n+1}:(\pih{\elstrhnp} -\I)
&=\intd \B_h^{n+1}:e^{\pih{\lstrhnp}} - \intd \div(\velhnp),\\
&=\intd \gvelhnp:e^{\pih{\lstrhnp}},
\end{align*}
which cancels out with the same term $\intd e^{\pih{\lstrhnp}}:\gvelhnp$ in the momentum equation.
\end{proof}

\subsubsection{The discontinuous Galerkin method}
\label{sec:log-P1-DG}

If the advection term $\vel \cdot \grad \strs$ is discretized by the {\em discontinuous Galerkin} method, the system writes:
\begin{equation} \label{eq:log-P1-DG}
\begin{split}
0 = \sum_{k=1}^{N_K} \int_{K_k} & \Re\brk{\frac{\velhnp-\velhn}{\dt} 
+ \velhn\cdot\gvelhnp}\cdot\bv 
\\
& - \phnp\div\bv + q\div \velhnp+ (1-\e)\gvelhnp:\grad\bv
  +\frac{\e}{\Wi}\pih{\elstrhnp}:\grad\bv
\\
&+ \brk{  \frac{\lstrhnp-\pih{\lstrhn}}{\dt} }:\bphi
- ( \Omh^{n+1}\pih{\lstrhnp} - \pih{\lstrhnp}\Omh^{n+1} ):\bphi
\\
& -2\B_h^{n+1}:\bphi - \frac{1}{\Wi}(\pih{\emstrhnp}-\I):\bphi
\\
 + \sum_{j=1}^{N_E} & \int_{E_j} \Abs{\velhn\cdot\bn} \jump{\pih{\lstrhnp}}:\bphi^+.
\end{split}
\end{equation}

Proposition~\ref{prop:log-P1-charac} still holds for solutions of the system~\eqref{eq:log-P1-DG}.
The proof follows that of the previous Section~\ref{sec:log-P1-charac} except for the treament of the jump term,
which follows that of the Section~\ref{sec:P0-strh-DG} (see also~\ref{sec:log-P0-DG}),
because $\pih{\lstrhnp} \in (\Pz)^\dd$ and
$\pih{\elstrhnp} = e^{\pih{\lstrhnp}}$ is also in $(\Pz)^\dd$.

\subsection{Other finite elements for $(\velh,\ph)$}\label{sec:P1-need}

In this section, we review the modifications that apply to the systems 
in the two previous Sections~\ref{sec:P1-strh} and~\ref{sec:log-P1-strh}
when the different mixed finite element spaces for $(\velh,\ph)$ proposed in Section~\ref{sec:need}
are used instead of Scott-Vogelius. 
Notice that the conclusions of Table~\ref{tab:P0-summary} 
about the conditions that the velocity field has to satisfy 
still hold for the two previous Sections~\ref{sec:P1-strh} and~\ref{sec:log-P1-strh}
with piecewise linear approximations of $\strh,\lstrh$.

Other finite elements space for $(\velh,\ph)$ than Scott-Vogelius
and adequate projections of the velocity field (see summary in Table~\ref{tab:finite element-summary})
have to be combined with interpolations of the stress field $\strh,\lstrh$ using $\pi_h$ 
(see the two previous Sections~\ref{sec:P1-strh} and~\ref{sec:log-P1-strh} above).
We give a summary of the $\pi_h$ projections that are then required in Table~\ref{tab:P1-summary}.

\subsubsection{Alternative mixed finite element space for $(\velh,\ph)$ with inf-sup condition}
\label{sec:P1-infsup}

The situation is very similar to that in Section~\ref{sec:P0-infsup}.
Among the mixed finite element space that satisfy the inf-sup condition, let us first choose the \emph{Taylor-Hood} $(\Pt)^d\times\Po$.
Again, because the velocity is not even weakly incompressible in the sense of the equation~\eqref{eq:weak-incompressibility}, 
we need to use the projection of the velocity field onto
the solenoidal vector fields for the treament of some terms in the variational formulations. 
When the advection terms $\vel\cdot\grad\strs$ and $\vel\cdot\grad\lstr$ are discretized 
using the \emph{characteristic} method,
we define the flow with $\Phrot\brk{\velhn}$ like in \eqref{eq:flow-P0-charac-HT} 
and use the same systems \eqref{eq:P1-charac} and~\eqref{eq:log-P1-charac} than above. 
When the advection terms $\vel\cdot\grad\strs$ and $\vel\cdot\grad\lstr$ are discretized 
using the \emph{discontinuous Galerkin} method,
we use systems similar to~\eqref{eq:P1-charac} and~\eqref{eq:log-P1-charac} above,
where the jump term rewrites (in the conformation-tensor formulation):
$$ + \sum_{j=1}^{N_E} \int_{E_j} |\Phrot\brk{\velhn}\cdot \bn| \jump{\pih{\strhnp}}:\bphi^+.$$
And one still needs to add the so-called Temam correction term~\eqref{eq:temam-term} to the weak formulation.

We can also use the \emph{Crouzeix-Raviart} finite elements for velocity (see~\eqref{eq:P1CR}): 
$(\velh,\ph,\strh)$ in $(\Po^{CR})^d\times\mathbb{P}_0\times(\Pod)^\dd$.
Similarly to the advection terms $\vel\cdot\grad\strs$ and $\vel\cdot\grad\lstr$,
the advection term $\vel\cdot\gvel$ in the Navier-Stokes equations should then be discretized 
either using a characteristic method with the flow defined in~\eqref{eq:P0-charac-CR} 
with any of the projections $\Ph$ introduced above for the velocity field,
or using the \emph{discontinuous Galerkin} method formulated in the equation~\eqref{eq:P0-DG-CR}.

It is noticeable that choosing the mixed finite elements of Crouzeix-Raviart
simplifies all the variational formulations presented above in the present Section~\ref{sec:P1}.
Indeed, since $\gvel\in(\Pz)^{d\times d}$ and we have the Lemma~\ref{lemma:lagrange},
it is then unnecessary to project the velocity except in the advection terms.
For instance, for the conformation-tensor formulation using the
discontinuous Galerkin method, the formulation writes:
\begin{equation} \label{eq:P1-DG-CR}
\begin{split}
0 = \sum_{k=1}^{N_K} \int_{K_k} & \Re\brk{\frac{\velhnp-\velhn}{\dt} + \Ph\brk{\velhn}\cdot\gvelhnp}\cdot\bv \\
& - \phnp\div\bv + q\div \velhnp+ (1-\e)\gvelhnp:\grad\bv  +\frac{\e}{\Wi}\strhnp:\grad\bv \\
& + \brk{  \frac{\strhnp-\pih{\strhn}}{\dt} }:\bphi - \brk{ (\gvelhnp)\strhnp +\strhnp(\gvelhnp)^T }:\bphi \\
& + \frac{1}{\Wi}(\strhnp-\I):\bphi \\
 + \sum_{j=1}^{N_E}  \int_{E_j} & \Abs{\Ph\brk{\velhn}\cdot\bn} \jump{\pih{\strhnp}} : \bphi^+
+ \Re \Abs{\Ph\brk{\velhn}\cdot\bn} \jump{\velhnp}\cdot\BRK{\bv}.
\end{split}
\end{equation}
Note that the second term in the sum of integrals over edges $E_j$ is due to the use of the Crouzeix-Raviart element, 
and is uncorrelated to the treatment of the advection by a discontinuous Galerkin method.

The discrete free energy estimate \eqref{eq:free-energy-P1-charac} holds. 
Its proof combines arguments of the proofs above, except for the treatment of the upper-convective term in~\eqref{eq:P1-DG-CR}.
This term writes, on any element $K_k$ of the mesh
(using Lemma~\ref{lemma:lagrange}, the fact that  $\gvel\in(\Pz)^{d\times d}$ and the incompressibility~\eqref{eq:divu=0}):
\[
\begin{split}
\int_{K_k} \gvelhnp\strhnp:(\I - \pih{\strhnp}^{-1}) 
& = \int_{K_k} \strhnp : \gvelhnp - \intd \strhnp:\pih{\strhnp}^{-1}\gvelhnp,\\
& = \int_{K_k} \pih{\strhnp} : \gvelhnp  - \intd \pih{\strhnp}:\pih{\strhnp}^{-1}\gvelhnp,\\
& = \int_{K_k} \pih{\strhnp} : \gvelhnp -\intd \div \velhnp, \\
& = \int_{K_k} \pih{\strhnp} : \gvelhnp,
\end{split}
\]
which vanishes after combination with the extra-stress term in the momentum equation, the latter satisfying:
\[
\int_{K_k} \strhnp:\gvelhnp = \int_{K_k} \pih{\strhnp}:\gvelhnp,
\]
because of the fact that  $\gvel\in(\Pz)^{d\times d}$ and using Lemma~\ref{lemma:lagrange}.

\subsubsection{Alternative mixed finite element space for $(\velh,\ph)$ without inf-sup}
\label{sec:P1-noinfsup}

It is also possible to use finite element spaces for $(\velh,\ph)$ that do not satisfy the 
inf-sup condition like in Section~\ref{sec:P0-noinfsup},
while the stress field is discretized using discontinuous piecewise linear approximations.
The construction of systems of equations and the derivation of discrete free energy estimates then
directly follow from the combination of results from the Section~\ref{sec:P0-noinfsup} with
those used above in Section~\ref{sec:P1}, after upgrading the degree of the polynomial approximations for the stress field.

If we consider the mixed finite element space $(\mathbb{P}_{1})^d\times\mathbb{P}_0$ for $(\velh,\ph)$,
and if the term $\vel \cdot \grad \strs$ is discretized with the {\em characteristic} method, the system then writes:
\begin{equation} \label{eq:P1-charac-stab}
\begin{aligned}
0 = \sum_{k=1}^{N_K} \int_{K_k} & \Re\brk{\frac{\velhnp-\velhn}{\dt} +
\velhn\cdot\gvelhnp}\cdot\bv 
 + \frac{\Re}{2} \div \velhn (\bv\cdot\velhnp)
\\
& - \phnp\div\bv + q\div \velhnp+ (1-\e)\gvelhnp:\grad\bv
  +\frac{\e}{\Wi}\strhnp: \grad\bv
\\
& + \brk{  \frac{\strhnp-{\pih{\strhn} \circ X^n(t^n)} }{\dt} }:\bphi
- \brk{\brk{\gvelhnp}\strhnp+\strhnp\brk{\gvelhnp}^T}:\bphi
\\
& + \frac{1}{\Wi}(\strhnp-\I):\bphi
\\
 + \sum_{j=1}^{N_E} & |E_j| \int_{E_j} \jump{\ph}\jump{q}\ ,
\end{aligned}
\end{equation}
with a flow $X^n$ computed with the projected field $\Phrot\brk{\velhn}$ through \eqref{eq:flow-P0-charac-HT}.
It is noteworthy that, for the same reason than above in the equation~\eqref{eq:P1-DG-CR},
the projection operator $\pi_h$ is needed only for the discretization of the advection term $\vel\cdot\grad\strs$.

If we consider the mixed finite element space $(\mathbb{P}_{1})^d\times\mathbb{P}_1$ for $(\velh,\ph)$,
it is straightforward to rewrite the system~\eqref{eq:P0-charac-stab-bis} where the stress field was only piecewise constant,
while using the same argument as above to see that only the advection term for the stress field needs a projected velocity.

\begin{remark}
We were not able to retrieve discrete free energy estimates without interpolating some terms 
in the formulations above thanks to the operator $\pi_h$. 
This operator projects the stress $\strh$ (or $\lstrh$) onto~$(\Pz)^\dd$. 
Thus, for the formulations we have considered in this section, the interest of using larger
dimensional spaces for $\strh$ (or $\lstrh$) than $(\Pz)^\dd$ is not clear. 
Our aim in this section is simply to exhibit discrete formulations with piecewise
linear approximations of the stress, for which we are able to derive a free energy estimate.
\end{remark}

\begin{table}[!h]
\begin{center}
\begin{tabular}[t]{|p{2cm}|p{4.5cm}|p{6.5cm}|}
\hline
$\gvel \dots$ & $\strh \in (\Pod)^\dd$ & $\lstrh \in (\Pod)^\dd$ \\
\hline
\hline
in $(\Pz)^{d^2}$ 
& $\pih{\strhn}$ in time derivative 
\newline (incl. flux term in DG)
& $\pih{\lstrhn}$ in time derivative
\newline (incl. flux term in DG)
\newline + implicit source term $\pih{\emstrhnp}$ in OB
\newline + implicit coupling term $\pih{\elstrhnp}$ in NS 
\\
\hline
not in $(\Pz)^{d^2}$
& $\pih{\strhn}$ in time derivative 
\newline (incl. flux term in DG)
\newline 
\newline + implicit coupling terms
\newline ( $\pih{\strhnp}$ in NS, OB )
& $\pih{\lstrhn}$ in time derivative
\newline (incl. flux term in DG)
\newline + implicit source term $\pih{\emstrhnp}$ in OB
\newline + implicit coupling terms
\newline ( $\pih{\elstrhnp}$ in NS, $\pih{\lstrhnp}$ in OB )
\\
\hline
\end{tabular}
\caption{\label{tab:P1-summary} 
Summary of projected terms in the Navier-Stokes (NS) and Oldroyd-B (OB) equations
for $\strh/\lstrh$ in $(\Pod)^\dd$.}
\end{center}
\end{table}


\section{Positivity, free energy estimate and the long-time issue}
\label{sec:stability}

Notice that both Propositions~\ref{prop:existence} and~\ref{prop:log-existence} 
impose a limitation on the time step 
which depends on the time iteration: $0 < \dt < c_0$,
where $c_0 \equiv c_0(\velhn,\strhn)$ is function of a time-dependent data.
Thus, these existence results are weak
insofar as the long-time existence of the discrete solutions is not
insured, \textit{i.e.} if $\sum_{n=0}^\infty c_0(\velhn,\strhn) < \infty$.

Yet, for the discretizations introduced above, we have also shown that at each time step,
the solutions of those discretizations satisfy free energy estimates. This will now
allow us to prove the long-time existence of the discrete solutions:
\begin{proposition} \label{prop:stability}
For any initial condition $(\velh^0,\strh^0)$ with $\strh^0$ symmetric positive definite, 
there exists a constant $c_1 \equiv c_1\brk{\velh^0,\strh^0} > 0$ such that, 
for any time step $0 \le \dt < c_1$, 
there exists, for all iterations $n\in\mathbb{N}$, $(\velh^{n+1},\strh^{n+1})$
which is the unique solution to the system~\eqref{eq:P0-charac} (resp.~\eqref{eq:P0-DG})
with $\strh^{n+1}$ symmetric positive definite.
\end{proposition}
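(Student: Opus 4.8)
The plan is to argue by induction on $n$, using the discrete free energy estimate of Proposition~\ref{prop:P0-charac} (resp.~Proposition~\ref{prop:P0-DG}) to confine the discrete state $(\velhn,\strhn)$ to a fixed compact subset of phase space, and then to check that the local-existence constant $c_0$ of Proposition~\ref{prop:existence} does not degenerate on such a compact set.

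First I would set up the induction. Assume $(\velh^k,\strh^k)$ has been constructed for $0\le k\le n$, with each $\strh^k$ symmetric positive definite and $F_h^k\le F_h^0$ for all $k\le n$ (the latter is automatic from Proposition~\ref{prop:P0-charac}, resp.~Proposition~\ref{prop:P0-DG}, as soon as the solutions exist with positive definite conformation tensor). From $F_h^n\le F_h^0$ and the definition~\eqref{eq:Fhn} of the discrete free energy,
\begin{equation*}
\frac{\Re}{2}\intd|\velhn|^2\le F_h^0,\qquad
\frac{\e}{2\Wi}\intd\tr(\strhn-\ln\strhn-\I)\le F_h^0.
\end{equation*}
Since the mesh $\mathcal{T}_h$ is fixed and all norms on the finite-dimensional spaces $(\Pt)^d$ and $(\Pz)^\dd$ are equivalent, the first bound gives $\|\velhn\|\le M_1$ for any fixed norm, with $M_1$ depending only on $F_h^0,\Re$ and $\mathcal{T}_h$. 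For the second bound, the eigenvalues $\mu_1,\dots,\mu_d$ of $\strhn|_{K_k}$ are positive and satisfy $\sum_i(\mu_i-\ln\mu_i-1)\le\frac{2\Wi}{\e}F_h^0$ on every element $K_k$, with each summand nonnegative; since $x\mapsto x-\ln x-1$ tends to $+\infty$ both as $x\to0^+$ and as $x\to+\infty$, this confines every eigenvalue of $\strhn$ to a fixed compact interval $[\lambda_{\min},\lambda_{\max}]\subset(0,\infty)$ depending only on $F_h^0,\e,\Wi$. Hence $\strhn$ ranges over a compact set $\mathcal{K}$ of symmetric positive definite piecewise constant tensor fields that is both bounded and bounded away from the boundary of $S_+^*$.

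Next I would revisit the proof of Proposition~\ref{prop:existence}. The constant $c_0(\velhn,\strhn)$ there degenerates only (i) when $\nabla_YQ(\dt,Y)$ fails to stay invertible along the implicit branch, which by~\eqref{eq:grad_Q} is governed by $\dt$ times the norm of $B(Y^n)+A(Y)+(\nabla_YA)Y$, and (ii) when the branch leaves $S_+^*$. Since $A$ is linear and $B$ depends linearly on $\velhn$ (through the advection coefficient, and through the backward flow $X^n$ for the characteristic scheme), quantity (i) is bounded, for $Y$ in a ball of radius $r$ around $Y^n$, by a continuous affine function of $\|\velhn\|$, $\|\strhn\|$ and $r$; equivalently, one may run a direct contraction-mapping argument for $Q(\dt,\cdot)=0$ on a ball around $Y^n$ of radius $r<\dist(\strhn,\partial S_+^*)$, whose contraction factor is $O(\dt)$ with a constant controlled by the same quantities and which simultaneously ensures (ii). In either presentation $c_0$ may be chosen bounded below by a strictly positive continuous function of $\bigl(\|\velhn\|,\|\strhn\|,\dist(\strhn,\partial S_+^*)\bigr)$. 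Over the compact set of states with $\intd|\velh|^2\le 2F_h^0/\Re$ and $\strh\in\mathcal{K}$, this function attains a positive minimum, which I call $c_1\equiv c_1(\velh^0,\strh^0)$ (it also depends on $\mathcal{T}_h,\Re,\Wi,\e$).

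Finally I would close the induction: if $0\le\dt<c_1\le c_0(\velhn,\strhn)$, Proposition~\ref{prop:existence} yields a unique $(\velhnp,\strhnp)$ solving~\eqref{eq:P0-charac} (resp.~\eqref{eq:P0-DG}) with $\strhnp$ symmetric positive definite, and then Proposition~\ref{prop:P0-charac} (resp.~Proposition~\ref{prop:P0-DG}) gives $F_h^{n+1}\le F_h^n\le F_h^0$, so the inductive hypotheses hold at step $n+1$; the base case $n=0$ is the assumption on the initial datum. I expect the main obstacle to be the third step: Proposition~\ref{prop:existence} cannot be used as a black box here, and one must re-examine its proof to see that the local-existence constant stays bounded below as long as the discrete state remains in a fixed compact subset of $\R_+\times S_+^*$ — which is exactly the information the free energy estimate provides.
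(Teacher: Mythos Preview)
Your proposal is correct and follows essentially the same strategy as the paper: induction on $n$, the discrete free energy estimate to confine the iterates, and a re-examination of the implicit-function-theorem argument of Proposition~\ref{prop:existence} to obtain a uniform lower bound on the local existence time. The paper packages the last step slightly differently---it bounds the norm of the solution branch $R(\dt)$ directly via~\eqref{eq:argument_estimate}--\eqref{eq:argument_norm} and then controls $\nabla_Y Q(\dt,R(\dt))$ along that branch to continue it up to $c_1$---but this is the same continuation-by-a-priori-estimate idea you describe, and your explicit two-sided eigenvalue bound (keeping $\strhn$ away from $\partial S_+^*$) is in fact a point the paper leaves implicit.
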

\begin{proof}[Proof of Proposition~\ref{prop:stability}]
Like in the proof of Proposition~\ref{prop:existence}, we will proceed with 
the proof for system~\eqref{eq:P0-charac} only, using its rewriting as system~\eqref{eq:P0-charac-proj}.

The proof is by induction on the time index $n$. With the notation of the proof of Proposition~\ref{prop:existence},
for a fixed $n=0\dots N_T-1$ and for a fixed  vector $Y^n$ of values in the subset $S_+^*$ of $\R^{2N_D+3N_K}$ 
(standing for the nodal and elementwise values of a field $(\velhn,\strhn)$ with
$\strhn$ symmetric positive definite), 
we define like in the proof of Proposition~\ref{prop:existence} (using the implicit function theorem)
a function $R:\dt\in[0,c_0)\to R(\dt)\in\R^{2N_D+3N_K}$ (where $c_0=c_0(\velhn,\strhn)$) such that:
$$ \forall \dt\in[0,c_0), Q(\dt,R(\dt)) = 0,$$
where $Q$ is defined by~\eqref{eq:Q}. For any $\dt\in[0,c_0)$, $R(\dt)\in\R^{2N_D+3N_K}$ stands for the nodal and elementwise values of a field $(\velh(\dt),\strh(\dt))$ (with $\strh(\dt)$ symmetric positive definite)
that is solution to the system~\eqref{eq:P0-charac-proj}.

Then, by Proposition~\ref{prop:P0-charac}, the solution $(\velh(\dt),\strh(\dt))$ satisfies a free energy estimate:
\begin{equation} \label{eq:argument_estimate}
F(\velh(\dt),\strh(\dt)) \leq F(\velhn,\strhn) \,. 
\end{equation}

Using the fact that all norms are equivalent in the finite-dimensional
vector space $\R^{2N_D+3N_K}$, and that, for $0<\nu\le 1-\frac1e$, we
have $ \nu \ x \le x - \ln(x) \,,\forall x>0 $, we obtain that there
exists two constants $\alpha>0$ and $\beta>0$ (independent of $\dt$), such that:
\begin{equation} \label{eq:argument_norm}
\alpha \|R(\dt)\| \leq F(\velh(\dt),\strh(\dt)) + \beta \,. 
\end{equation}

Let us define the function $D$:
$$ D: \dt \in [0,c_0) \longrightarrow B(Y^n) + A(R(\dt)) +(\nabla_Y A)R(\dt)  \in \R^{2N_D+3N_K}.$$
We recall that (see~\eqref{eq:grad_Q}), with the new notations:
$\nabla_Y Q(\dt,R(\dt))=I + \dt D(\dt)$.
Using~\eqref{eq:argument_estimate},~\eqref{eq:argument_norm} and the
fact that the discrete free energy is non-increasing, the function $D$ satisfies: 
\begin{align*}
\|D(\dt)\| &\le \|B\| \|Y^n\| + (\|A\|+\|\nabla_Y A\|) \|R(\dt)\|,\\
& \le (\|B\|+\|A\|+\|\nabla_Y A\|) \frac{1}{\alpha}
\brk{F(\velhn,\strhn)+\beta},\\
& \le (\|B\|+\|A\|+\|\nabla_Y A\|) \frac{1}{\alpha}
\brk{F(\velh^0,\strh^0)+\beta}.
\end{align*} 
This shows that there exists a constant $c_1 \equiv c_1\brk{\velh^0,\strh^0} > 0$ such that,  for any time step $0 \le \dt < c_1$, the matrix $\nabla_Y Q(\dt,R(\dt))$ is invertible. Using the implicit function theorem, this implies that, for any time step $0 \le \dt < c_1$, the system~\eqref{eq:P0-charac-proj} admits a solution $(\velhnp,\strhnp)$  with $\strhnp$ symmetric positive definite at all iterations $n\in\mathbb{N}$.
\end{proof}

A similar result can be proven for the log-formulations~\eqref{eq:log-P0-charac} and~\eqref{eq:log-P0-DG}:
\begin{proposition} \label{prop:log-stability}
For any initial condition $(\velh^0,\lstrh^0)$, 
there exists a constant $c_1 \equiv c_1\brk{\velh^0,\lstrh^0} > 0$ such that, 
for any time step $0 \le \dt < c_1$, 
there exists, for all iterations $n\in\mathbb{N}$, $(\velh^{n+1},\lstrh^{n+1})$
which is the unique solution to the system~\eqref{eq:log-P0-charac} (resp.~\eqref{eq:log-P0-DG}).
\end{proposition}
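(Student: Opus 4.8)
The plan is to follow the proof of Proposition~\ref{prop:stability} almost verbatim, substituting the discrete free energy estimate of Proposition~\ref{prop:log-P0-charac} (resp.\ Proposition~\ref{prop:log-P0-DG}) for that of Proposition~\ref{prop:P0-charac}, and using that the log-formulation carries no positivity constraint, so that the implicit function theorem is applied on the whole of $\R^{2N_D+3N_K}$ rather than on the open subset $S_+^*$. I would treat only~\eqref{eq:log-P0-charac}, via its equivalent form~\eqref{eq:log-P0-charac-proj}. Collecting the nodal and elementwise values of $(\velhnp,\lstrhnp)$ into a vector $Y^{n+1}\in\R^{2N_D+3N_K}$, the scheme reads $Q(\dt,Y^{n+1})=0$ with $Q(\dt,Y)=\dt\,A(Y)Y+\dt\,B(Y^n)Y+\dt\,D(Y)+Y-C(Y^n,\dt)$, where $A,B,C$ are as in~\eqref{eq:Q} and $D$ is the continuously differentiable contribution of the exponential terms $\elstrhnp,\emstrhnp$ discussed just after Proposition~\ref{prop:log-existence}. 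Then I would argue by induction on $n$: given $(\velhn,\lstrhn)$, Proposition~\ref{prop:log-existence} furnishes, through the implicit function theorem, some $c_0=c_0(\velhn,\lstrhn)>0$ and a $C^1$ map $R:[0,c_0)\to\R^{2N_D+3N_K}$ with $Q(\dt,R(\dt))=0$, where $R(\dt)$ gathers the values of the solution $(\velh(\dt),\lstrh(\dt))$ of~\eqref{eq:log-P0-charac-proj}.

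The next step is to invoke Proposition~\ref{prop:log-P0-charac}, which gives $F(\velh(\dt),e^{\lstrh(\dt)})\le F(\velhn,\elstrhn)$ and hence, by induction, $F(\velh(\dt),e^{\lstrh(\dt)})\le F(\velh^0,e^{\lstrh^0})$ for all $\dt\in[0,c_0)$. The role played by ``$\nu x\le x-\ln x$'' in the proof of Proposition~\ref{prop:stability} is here played by the elementary inequality: there are $\nu>0$ and $C>0$ with $\nu\,|\lambda|\le (e^\lambda-\lambda-1)+C$ for every $\lambda\in\R$ (one may take $\nu=\tfrac12$, $C=1$, treating $\lambda\ge0$ and $\lambda<0$ separately). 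Applied to the eigenvalues of the symmetric matrix $\lstrh(\dt)$ on each element $K_k$, it bounds $\tr|\lstrh(\dt)|$ elementwise by $\tr(e^{\lstrh(\dt)}-\lstrh(\dt)-\I)+C$; using the equivalence of norms on the finite-dimensional space $\R^{2N_D+3N_K}$, this gives constants $\alpha,\beta>0$, independent of $\dt$ and of $n$, with $\alpha\,\|R(\dt)\|\le F(\velh(\dt),e^{\lstrh(\dt)})+\beta$. In particular $\|\lstrh(\dt)\|$ --- and therefore $e^{\|\lstrh(\dt)\|}$ and $\|e^{\pm\lstrh(\dt)}\|$ --- is bounded by a constant depending only on the initial data $(\velh^0,\lstrh^0)$.

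Finally, mirroring the end of the proof of Proposition~\ref{prop:stability}, I would set $G(\dt)=B(Y^n)+A(R(\dt))+(\nabla_Y A)R(\dt)+\nabla_Y D(R(\dt))$, so that $\nabla_Y Q(\dt,R(\dt))=I+\dt\,G(\dt)$ as in~\eqref{eq:grad_Q}. The only ingredient beyond the proof of Proposition~\ref{prop:stability} is the bound on $\|\nabla_Y D(R(\dt))\|$: since $D$ is built from the matrix exponential, whose Fr\'echet derivative at $\lstrh$ is controlled by $e^{\|\lstrh\|}$, and $\|\lstrh(\dt)\|$ has just been bounded in terms of the fixed initial data, $\|\nabla_Y D(R(\dt))\|$ --- hence $\|G(\dt)\|$ --- is bounded by a constant $M\equiv M(\velh^0,\lstrh^0)$ independent of $\dt$ and $n$. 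Taking $c_1:=1/M$, the matrix $\nabla_Y Q(\dt,R(\dt))$ is then invertible for $0\le\dt<c_1$, and the implicit function theorem yields, at every iteration $n\in\mathbb{N}$ and for every time step $0\le\dt<c_1$, a unique solution $(\velhnp,\lstrhnp)$ of~\eqref{eq:log-P0-charac-proj}, hence of~\eqref{eq:log-P0-charac} (resp.~\eqref{eq:log-P0-DG}). I expect the coercivity step to be the main obstacle: one has to check that the log-free-energy uniformly controls both $\lstrh$ and $e^{\lstrh}$ along the whole iteration, which is exactly what the scalar inequality above, combined with the monotonicity of $(F_h^n)_{n\in\mathbb{N}}$ from Proposition~\ref{prop:log-P0-charac}, provides.
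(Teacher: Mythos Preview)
Your proposal is correct and follows essentially the same route as the paper's own proof: adapt the argument of Proposition~\ref{prop:stability} with the modified $Q$ (carrying the extra exponential term $D$) from Proposition~\ref{prop:log-existence}, and replace the scalar coercivity inequality $\nu x\le x-\ln x$ by an inequality of the form $\nu|x|\le e^{x}-x+\text{const}$, which is exactly what the paper invokes. Your treatment is in fact more explicit than the paper's on one point---the control of $\|\nabla_Y D(R(\dt))\|$ via the bound on $\|\lstrh(\dt)\|$ and the growth of the Fr\'echet derivative of the matrix exponential---which the paper leaves implicit.
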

\begin{proof}[Proof of Proposition~\ref{prop:log-stability}]
The proof of Proposition~\ref{prop:log-stability} is fully similar to that of Proposition~\ref{prop:stability}
using for $Q(\dt,Y)$ and $D(\dt)$ slightly modified expressions 
as explained for the proof of Proposition~\ref{prop:log-existence}.
The entropic term in the free energy still helps at bounding the norm of the vector of nodal-elementwise values
for $(\velh,\lstrh)$ like in~\eqref{eq:argument_norm} using the following scalar
inequality, which is true for any fixed $\nu \in (0,1]$: $\forall x \in \R$, $ e^x-x+1 \ge \nu |x|$.
\end{proof}

{F}rom Propositions~\ref{prop:stability} and~\ref{prop:log-stability}, 
we have the global-in-time existence of solutions to those
discretizations of the Oldroyd-B system  presented above which satisfy a discrete free energy estimate.

The log-formulation actually also satisfies the following long-time
existence result, using the fact that the {\it a priori} estimates can be obtained
without requiring the stress tensor field to be positive definite:
\begin{proposition} \label{prop:log-stability-2}
For any initial condition $(\velh^0,\lstrh^0)$, 
and for any constant time step $\dt > 0$, 
there exists, for all iterations $n\in\mathbb{N}$, $(\velh^{n+1},\lstrh^{n+1})$
which is solution to the system~\eqref{eq:log-P0-charac} (resp.~\eqref{eq:log-P0-DG}).
\end{proposition}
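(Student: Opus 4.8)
The plan is to prove this proposition by a topological degree argument, which only requires an \emph{a priori} bound on the discrete solutions and does \emph{not} require the uniqueness that was used, through the implicit function theorem, in Propositions~\ref{prop:existence}--\ref{prop:log-stability}. The crucial point is that, for the log-formulation, the one-step discrete free energy inequality established in the proof of Proposition~\ref{prop:log-P0-charac} (resp.\ Proposition~\ref{prop:log-P0-DG}) holds \emph{for any time step} $\dt>0$ and \emph{without} any positivity assumption on $\elstrhnp$ --- which holds automatically --- in contrast with the conformation-tensor formulation, where the estimate of Proposition~\ref{prop:P0-charac} presupposes that $\strhnp$ be symmetric positive definite, an open condition that a pure degree argument cannot by itself preserve. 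I would argue by induction on $n$: the induction hypothesis provides $(\velhn,\lstrhn)$ with $F(\velhn,\elstrhn)\le F(\velh^0,e^{\lstrh^0})=:F^0$.

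For the inductive step, I would work with the reduced formulation~\eqref{eq:log-P0-charac-proj} and look for a zero $Y^{n+1}\in\R^{M}$ (where $M$ is the total number of nodal and elementwise unknowns for $(\velh,\lstrh)$) of the map $Q(\dt,\cdot)$, which here has the form~\eqref{eq:Q} augmented by the additional smooth term $\dt\,D(Y)$ coming from $\elstrhnp$, as already noted in the discussion following Proposition~\ref{prop:log-existence}. Consider the homotopy $s\in[0,1]\mapsto Q(s\dt,\cdot)$: it is jointly continuous in $(s,Y)$, the only $s$-dependence besides the explicit one being through the backward flow $X^n$ over $[t^n,t^n+s\dt]$ (continuous in $s$, and simply $\dt$-independent for the discontinuous Galerkin scheme~\eqref{eq:log-P0-DG}). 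For $s>0$, any zero $Y$ of $Q(s\dt,\cdot)$ is the vector of nodal-elementwise values of a solution $(\velh,\lstrh)$ to the scheme with time step $s\dt$ and data $(\velhn,\lstrhn)$, hence by Proposition~\ref{prop:log-P0-charac} it satisfies $F(\velh,\elstrh)\le F(\velhn,\elstrhn)\le F^0$; for $s=0$ one has $Q(0,Y)=Y-Y^n$ since $C(Y^n,0)=Y^n$, whose zero $Y^n$ also satisfies $\Norm{Y^n}\le\rho$ below. Using the coercivity of $F$ exactly as in the proof of Proposition~\ref{prop:log-stability} --- i.e.\ the scalar inequality $e^x-x+1\ge\nu|x|$ together with the equivalence of norms on $\R^{M}$, which yields an estimate of the type~\eqref{eq:argument_norm} --- there is a radius $\rho=\rho(F^0)>0$, \emph{independent of $s$ and of $n$}, such that every such zero satisfies $\Norm{Y}\le\rho$.

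Consequently $Q(s\dt,Y)\neq0$ for all $s\in[0,1]$ whenever $\Norm{Y}=\rho+1$, so by homotopy invariance of the Brouwer degree $\deg\brk{Q(\dt,\cdot),B_{\rho+1},0}=\deg\brk{Q(0,\cdot),B_{\rho+1},0}$. The latter equals $1$, because $Q(0,\cdot)\colon Y\mapsto Y-Y^n$ is an affine isomorphism whose unique zero $Y^n$ lies in $B_{\rho+1}$. Hence $\deg\brk{Q(\dt,\cdot),B_{\rho+1},0}\neq0$, and $Q(\dt,\cdot)$ has at least one zero in $B_{\rho+1}$, which provides the desired $(\velhnp,\lstrhnp)$; no positivity of $\elstrhnp$ needs to be checked since it is automatic. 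Since $\rho$ depends only on $F^0$, the same step applies at every iteration and the induction closes, giving existence for all $n\in\mathbb{N}$ and every $\dt>0$.

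I expect the only delicate point --- and the reason one cannot run the same argument for the conformation-tensor formulation --- to be the uniformity, along the whole homotopy $s\in[0,1]$, of the \emph{a priori} free energy bound: this works precisely because the log-formulation estimate needs neither a CFL-type restriction on $\dt$ nor any positivity assumption on the conformation tensor. One should also observe that uniqueness is genuinely lost here, consistently with the possible bifurcations mentioned in the introduction.
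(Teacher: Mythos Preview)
Your proof is correct and reaches the same conclusion as the paper, but by a genuinely different topological route. The paper argues by contradiction via the Brouwer \emph{fixed point} theorem directly: assuming the map $\mathcal{F}$ (your $Q(\dt,\cdot)$) has no zero in a ball $\mathcal{B}_\alpha$, it builds the continuous self-map $\mathcal{G}=-\alpha\,\mathcal{F}/\Norm{\mathcal{F}}$ of $\mathcal{B}_\alpha$, obtains a fixed point $(\bv,\bphi)$, and then tests $\mathcal{F}$ against the specific pair $\brk{\bv,\frac{\e}{2\Wi}(e^{\bphi}-\I)}$ --- reproducing inside the proof the very manipulations of Proposition~\ref{prop:log-P0-charac} --- to derive a sign contradiction once $\alpha$ is large enough. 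You instead invoke the Brouwer \emph{degree} and the homotopy $s\mapsto Q(s\dt,\cdot)$, quoting Proposition~\ref{prop:log-P0-charac} as a black box to bound zeros uniformly in $s$, and computing the degree at $s=0$ where the map is affine. Your version is more modular (the a~priori estimate is cited rather than re-derived) and arguably cleaner; the paper's version needs only the fixed point theorem and not degree theory. Both rest on exactly the same structural observation you correctly single out: the log-formulation estimate holds for arbitrary elements of the finite element space and for any $\dt>0$, with no positivity constraint to protect --- which is precisely what fails for the conformation-tensor formulation.
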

\begin{proof}[Proof of Proposition~\ref{prop:log-stability-2}]
We will proceed with the proof for system~\eqref{eq:log-P0-charac} only, 
using its rewriting as system~\eqref{eq:log-P0-charac-proj}.
Note already that, since the derivation of discrete free energy estimates for the system~\eqref{eq:log-P0-charac}
does not require the solution $\lstrhnp$ and the test function to be non-negative like in 
the derivation of discrete free energy estimates for the system~\eqref{eq:P0-charac}, 
then the manipulations used to derive the free energy estimate~\eqref{eq:free-energy-log-P0-charac} 
can also be done {\it a priori} for any function in the finite element space. 

Let us consider a fixed time index $n$ and a given couple
$(\velhn,\lstrhn)\in (\Pt)^d_{\div=0}\times(\Pz)^\dd$. We equip the Hilbert space $(\Pt)^d_{\div=0}\times(\Pz)^\dd$ with the following inner product:
$$ \brk{ (\bv_1,\bphi_1) ; (\bv_2,\bphi_2) } = \intd \bv_1\cdot\bv_2 + \bphi_1:\bphi_2 \,, $$
for all $(\bv_1,\bphi_1),(\bv_2,\bphi_2) \in
(\Pt)^d_{\div=0}\times(\Pz)^\dd$, and denote by $\|\cdot\|$ the
associated norm. Let us introduce
the mapping ${\cal F}:
(\Pt)^d_{\div=0}\times(\Pz)^\dd \to 
(\Pt)^d_{\div=0}\times(\Pz)^\dd$ defined by: for $(\vel, \lstr) \in (\Pt)^d_{\div=0}\times(\Pz)^\dd$,  for any test function $(\bv,\bphi) \in (\Pt)^d_{\div=0}\times(\Pz)^\dd$,
\begin{align*}
\brk{ \mathcal{F}(\vel, \lstr) ; (\bv,\bphi) } 
= \intd & \Re\brk{\frac{\vel-\velhn}{\dt} + \velhn\cdot\gvel}\cdot\bv 
+ (1-\e)\gvel:\grad\bv +\frac{\e}{\Wi}\elstr:\grad\bv
\\
& + \brk{  \frac{\lstr-\lstrhn \circ X^n(t^n) }{\dt} }:\bphi
- ( \Om\lstr - \lstr\Om ):\bphi
-2\B:\bphi
\\
& - \frac{1}{\Wi}(\mlstr-\I):\bphi,
\end{align*}
where $\Om$ and $B$ are associated with the decomposition of the
velocity gradient $\gvel$ as explained in Lemma~\ref{lemma:decomposition}:
\begin{equation*}
\gvel = \Om + \B + \N \mlstr.
\end{equation*}
If $(\velhnp,\lstrhnp)$ is solution to~\eqref{eq:log-P0-charac-proj},
then we have: for all $(\bv,\bphi) \in (\Pt)^d_{\div=0}\times(\Pz)^\dd$,
\begin{equation}\label{eq:solution}
\brk{\mathcal{F}(\velhnp,\lstrhnp); (\bv,\bphi) }  = 0.
\end{equation}

Considering $\cal F(\bv,\bphi)$ 
and using $\brk{\bv,\frac{\e}{2\Wi}(e^{\bphi} -\I)}$ as test functions, 
we get the following inequality after similar manipulations to those in the 
proof of Proposition~\ref{prop:log-P0-charac}:
\begin{equation}\label{eq:inegalite}
\begin{split}
& \brk{ \mathcal{F}(\bv,\bphi) ; \brk{\bv,\frac{\e}{2\Wi}(e^{\bphi} -\I)} } \\
& \quad \ge \frac{\Re}{2}\intd|\bv|^2 + \frac{\e}{2 \Wi}\intd\tr(e^{\bphi} - \bphi) 
 - \frac{\Re}{2}\intd|\velhn|^2 - \frac{\e}{2 \Wi}\intd\tr(\elstrhn - \lstrhn ) \\
& \quad\quad + \intd\frac{\Re}{2} |\bv-\velhn|^2 
 + \dt \intd (1-\e)|\gvelhnp|^2 + \frac{\e}{2 {\Wi}^2} \tr\brk{e^{\bphi}+e^{-\bphi}-2I}.
\end{split}
\end{equation}

Let us now assume that, for any $\alpha>0$, the mapping $\cal F$ has no zero $(\velhnp,\lstrhnp)$ 
satisfying~\eqref{eq:solution} in the ball
$$ \mathcal{B}_\alpha = \BRK{ (\bv,\bphi) \in (\Pt)^d_{\div=0}\times(\Pz)^\dd ,\,  \|(\bv,\bphi)\|\le\alpha } .$$
Then, for such a real number $\alpha > 0$ to be fixed later, we define the following continuous mapping from $\mathcal{B}_\alpha$ onto itself:
$$ \mathcal{G}(\bv,\bphi) = -\alpha \frac{\mathcal{F}(\bv,\bphi)}{\|\mathcal{F}(\bv,\bphi)\|},
\forall (\bv,\bphi) \in (\Pt)^d_{\div=0}\times(\Pz)^\dd. $$
By the Brouwer fixed point theorem, $\cal G$ has a fixed point in $\mathcal{B}_\alpha$.
Let us still denote that fixed point $(\bv,\bphi)$ for the sake of simplicity. By definition, it satisfies:
\begin{equation}\label{eq:fixpoint}
\mathcal{G}(\bv,\bphi) = (\bv,\bphi) \in \mathcal{B}_\alpha \text{ and } \|\mathcal{G}(\bv,\bphi)\| = \alpha.
\end{equation}

Using the equation~\eqref{eq:fixpoint} with the inequality~\eqref{eq:inegalite}, we have:
\begin{align}
\nonumber
& \brk{ \mathcal{F}(\bv,\bphi) ; \brk{\bv,\frac{\e}{2\Wi}(e^{\bphi} -\I)} } \\
\nonumber
& \quad = -\frac{ \| \mathcal{F}(\bv,\bphi) \| }{\alpha} \brk{ (\bv,\bphi) ; \brk{\bv,\frac{\e}{2\Wi}(e^{\bphi} -\I)} }, \\
\nonumber
& \quad = -\frac{ \| \mathcal{F}(\bv,\bphi) \| }{\alpha} 
  \brk{ \intd |\bv|^2 + \frac{\e}{2 \Wi} \tr\brk{\bphi\, e^{\bphi} - \bphi} }, \\
\nonumber
& \quad \ge \frac{\Re}{2}\intd|\bv|^2 + \frac{\e}{2 \Wi}\intd\tr(e^{\bphi} - \bphi ) 
 - \frac{\Re}{2}\intd|\velhn|^2 - \frac{\e}{2 \Wi}\intd\tr(\elstrhn - \lstrhn ) \\
\label{eq:statement1}
& \quad\quad + \intd\frac{\Re}{2} |\bv-\velhn|^2 
 + \dt \intd (1-\e)|\gvelhnp|^2 + \frac{\e}{2 {\Wi}^2} \tr\brk{e^{\bphi}+e^{-\bphi}-2I}.
\end{align}

Using the scalar inequality $e^x-x\ge|x|,\, \forall x\in\R$, we have:
\begin{equation}\label{eq:scalar1}
\intd \tr(e^{\bphi} - \bphi + \I) \ge \sum_{i=1}^d \intd |\lambda_i|, \, \forall \bphi\in(\Pz)^\dd, 
\end{equation}
where $(\lambda_i)_{1\le i\le d}$ are functions depending on $\bphi$ such that, for all $\x \in\mathcal{D}$, 
$(\lambda_i(\x))_{1\le i\le d}$ are the $d$ (non-necessarily distinct) real eigenvalues of 
the symmetric matrix $\bphi(\x)$.

Now, since $ (\Pt)^d_{\div=0}\times(\Pz)^\dd $ is finite-dimensional, all norms are equivalent.
So there exist $\gamma_1,\gamma_2 > 0$ such that, for all $(\bv,\bphi) \in (\Pt)^d_{\div=0}\times(\Pz)^\dd$:
\begin{equation}\label{eq:norm_equivalence}
\gamma_1\|(\bv,\bphi)\| 
\le \brk{\intd|\bv|^2}^\frac12 +\|\max_{1\le i\le d} |\lambda_i(\x)|\|_{\infty}
\le \gamma_2\|(\bv,\bphi)\|\,,
\end{equation}
where it is easy to prove that $\|\max_{1\le i\le d}|\lambda_i(\x)|\|_{\infty}$ defines a norm in the vector space
$L^\infty\brk{\D,\mathcal{S}(\R^{d\times d})}$.

Using the equation~\eqref{eq:scalar1} with the norm equivalence~\eqref{eq:norm_equivalence}, we obtain:
\begin{align}
\frac{\Re}{2} & \intd|\bv|^2 + \frac{\e}{2 \Wi}\intd\tr(e^{\bphi} - \bphi + \I)
\\
\ge
& \min\brk{\frac{\Re}{2},\frac{\e}{2 \Wi}\frac{1}{\|\max_{1\le i\le d}|\lambda_i(\x)|\|_{\infty}}}
\brk{ \intd|\bv|^2 + \|\max_{1\le i\le d}|\lambda_i(\x)|\|_{\infty} \sum_{i=1}^d \intd |\lambda_i| }\,,
\\
\ge 
& \min\brk{\frac{\Re}{2},\frac{\e}{2 \Wi}\frac{1}{\|\max_{1\le i\le d}|\lambda_i(\x)|\|_{\infty}}} 
\brk{ \intd|\bv|^2 + \sum_{i=1}^d \intd |\lambda_i|^2 }\,,
\\
= 
& \min\brk{\frac{\Re}{2},\frac{\e}{2 \Wi}\frac{1}{\|\max_{1\le i\le d}|\lambda_i(\x)|\|_{\infty}}} 
\|(\bv,\bphi)\|^2\,,
\\
\ge
& \min\brk{\frac{\Re}{2},\frac{\e}{2 \Wi \gamma_2 \alpha}} \alpha^2.
\end{align}
If we now choose $\alpha$ large enough so that:
$$ \min\brk{\frac{\Re}{2},\frac{\e}{2 \Wi \gamma_2 \alpha}} \alpha^2 > 
\frac{\Re}{2}\intd|\velhn|^2 + \frac{\e}{2 \Wi}\intd\tr(\elstrhn - \lstrhn + \I), $$
we get:
\begin{equation}\label{eq:statement2}
\begin{split}
& \frac{\Re}{2}\intd|\bv|^2 + \frac{\e}{2 \Wi}\intd\tr(e^{\bphi} - \bphi + \I) 
 - \frac{\Re}{2}\intd|\velhn|^2 - \frac{\e}{2 \Wi}\intd\tr(\elstrhn - \lstrhn + \I) \\
& \quad\quad + \intd\frac{\Re}{2} |\bv-\velhn|^2 
 + \dt \intd (1-\e)|\gvelhnp|^2 + \frac{\e}{2 {\Wi}^2} \tr\brk{e^{\bphi}+e^{-\bphi}-2\I} > 0.
\end{split}
\end{equation}
This is obviously in contradiction with~\eqref{eq:statement1} since, for all $ \bphi\in(\Pz)^\dd$, we have
$ \tr(\bphi e^{\bphi} - \bphi) \ge 0$ in virtue of the scalar inequality $x(e^x-1)\ge0,\,\forall x\in\R$.

Thus, for any $\dt > 0$, if we choose $\alpha$ sufficiently large, the mapping $\cal F$ has a zero $(\velhnp,\lstrhnp)$ satisfying~\eqref{eq:solution} in the ball $\mathcal{B}_\alpha$.
\end{proof}
Notice that Proposition~\ref{prop:log-stability-2} does not ensure
the uniqueness of solutions. The fact that we are able to prove
such a stability result without any assumption on the timestep for the
log-formulation, and not for the classical formulation, may be related
to the fact that the log-formulation has been reported to be more stable
than the classical formulation (see~\cite{hulsen-fattal-kupferman-05}).

\begin{remark}[Other positivity preserving schemes]
\label{sec:lie}
There exist other means than using the log-formulation to preserve the non-negativity of the conformation tensor.
A very natural way of preserving the non-negativity is to use a Lie-formulation
of the upper convective derivative term.
For instance, following~\cite{lee-xu-06}, one can write the following discretization of~\eqref{eq:form-sigma}
using Scott-Vogelius elements for $(\velh,\ph)$ and piecewise constant approximations for $\strh$:
\begin{equation} \label{eq:P0-lie}
\begin{split}
0 = \intd & \Re\brk{\frac{\velhnp-\velhn}{\dt} + \velhn\cdot\gvelhnp}\cdot\bv 
\\
& - \phnp\div\bv + q\div \velhnp+ (1-\e)\gvelhnp:\grad\bv
  +\frac{\e}{\Wi}\strhnp:\grad\bv
\\
&+\brk{\frac{\strhnp-(\I-\dt\pih{\gvelhnp})^{-1}\brk{ \strhn \circ X^n(t^n) }(\I-\dt\pih{\gvelhnp})^{-T} }{\dt}}:\bphi
\\
&+ \frac{1}{\Wi}(\strhnp-\I):\bphi,
\end{split}
\end{equation}
where the function $X^n(t)$ is defined by~\eqref{eq:flow-P0-charac}.
The system~\eqref{eq:P0-lie} admits a solution such that
$(\I-\dt\pih{\gvelhnp})^{-1}$ is well-defined, provided $\dt$ is sufficiently small
(but possible very small when $\Norm{\gvelhnp}$ is large).
Besides, taking $\bphi$ as the characteristic function of some element $K_k$, 
we have the following equality inside $K_k$:
\begin{equation}\label{eq:local_identity}
\brk{1+\frac{\dt}{\Wi}}\strhnp =  (\I-\dt\pih{\gvelhnp})^{-1}\brk{ \strhn \circ X^n(t^n) }(\I-\dt\pih{\gvelhnp})^{-T}
+\frac{\dt}{\Wi}\I.
\end{equation}
Then it is clear that the system~\eqref{eq:P0-lie} preserves the non-negativity of $\strhn$.
Moreover, it is possible to derive the free energy
estimate~\eqref{eq:free-energy-P0-charac} for the
system~\eqref{eq:P0-lie}. It suffices to take as a test function for the stress:
$$ \bphi= \frac{\e}{2\Wi} \brk{\I-\dt\pih{\gvelhnp}}^T \brk{\I-(\strhnp)^{-1}} (\I-\dt\pih{\gvelhnp}) \,, $$
and to proceed to the derivation of a free energy estimate using both ideas of the present work
and of the work~\cite{lee-xu-06}, after noting that:
\begin{equation} \label{eq:noting}
\tr\brk{ \pih{\gvelhnp}^T \brk{\I-(\strhnp)^{-1}} \pih{\gvelhnp} 
\brk{ \brk{1+\frac{\dt}{\Wi}}\strhnp - \frac{\dt}{\Wi}\I } } \ge 0 \,,
\end{equation}
the proof of which is fully similar to the proof of~\eqref{eq:AB},
using the fact that 
$\brk{1+\frac{\dt}{\Wi}}\strhnp - \frac{\dt}{\Wi}\I$
is symmetric positive definite (provided $\dt$ is sufficiently small)
and $\pih{\gvelhnp}^T \brk{\I-(\strhnp)^{-1}} \pih{\gvelhnp}$
is symmetric positive semi-definite.
\end{remark}


\appendix


\section{Some properties of symmetric positive definite matrices}

\subsection{Proof of Lemma \ref{lemma:alg_spd} }\label{ap:alg_spd}

Formula~\eqref{eq:trln},~\eqref{eq:tr-ln} and~\eqref{eq:tr-plus_inv} are
simply obtained by diagonalizing the symmetric positive definite matrix $\strs$, and using the
inequalities: $\forall x,y >0$, $\ln(xy)=\ln x + \ln y$, $x-1 \ge
\ln x$ and $x + 1/x \ge 2$.

Let us now prove Formula~\eqref{eq:AB}.
By diagonlization, we have
$\strs=\Omega^T D \Omega$ with $\Omega$ orthogonal and $D$ diagonal strictly positive,
which gives:
\begin{align}
\tr(\strs \str^{-1}) & = \tr(\Omega^T \sqrt{D}\sqrt{D} \Omega\str^{-1}),
\\
& = \tr(\sqrt{D} \Omega \str^{-1}\Omega^T \sqrt{D}),
\\
& \ge 0,
\end{align}
because $\sqrt{D} \Omega \str^{-1}\Omega^T \sqrt{D}$ is clearly a symmetric positive definite matrix.

Likewise, we have:
\begin{eqnarray*}
\det(\strs \str^{-1}) & = & \det(\Omega^T D \Omega \str^{-1}), \\
& = & \det(\sqrt{D} \Omega \str^{-1}\Omega^T \sqrt{D}).
\end{eqnarray*}
And noting that $A=\sqrt{D} \Omega \str^{-1}\Omega^T \sqrt{D}$ is symmetric positive definite,
the proof of~\eqref{eq:traceAB} is then equivalent to show:
$$ \ln(\det(A)) \leq \tr(A-\I) \,, $$
for any symmetric positive definite $A$, which derives from~\eqref{eq:trln} and~\eqref{eq:tr-ln}.

It remains to prove Formula~\eqref{eq:traceAB+}.
By diagonalization, we write $\strs=O^T D O$ and $\str=R^T \Lambda R$
with $\Omega$ and $R$ orthogonal, and $D$ and $\Lambda$ diagonal
strictly positive. Let us introduce the orthogonal matrix
$\Omega=OR^T$. We denote by $D_i$ (resp. $\Lambda_i$) is the $(i,i)$-th
entry of $D$ (resp. of $\Lambda$). 

Let us first prove Formula~\eqref{eq:traceAB+} in dimension 2. We set
$\phi=\Omega_{11}^2$ ($\phi \in [0,1]$), so that
$\Omega_{12}^2=(1-\phi)$, $\Omega_{21}^2=(1-\phi)$ and
$\Omega_{22}^2=\phi$. We have
\begin{align*}
J(\phi)&=\tr\brk{\brk{ \ln \strs - \ln \str} \strs - \strs + \str},\\
&=\sum_{i} D_i \ln D_i - D_i + \Lambda_i - \sum_{i,j} \Omega_{ij}^2 D_i
\ln \Lambda_j,\\
&=\sum_{i} D_i \ln D_i - D_i + \Lambda_i - \phi \sum_i D_i
\ln \Lambda_i - (1 - \phi) \sum_{i \neq j} D_i \ln \Lambda_j.
\end{align*}
The functional $J$ is an affine function defined on the interval
$[0,1]$. For $\phi=0$, we have
\begin{align*}
J(0)&=\sum_{i} D_i \ln D_i - D_i + \Lambda_i  - \sum_{i \neq j} D_i \ln
\Lambda_j,\\
&=\sum_{i} D_i \brk{ \frac{\Lambda_{s(i)}}{D_i} - 1 - \ln \brk{ \frac{\Lambda_{s(i)}}{D_i} } } \ge 0,
\end{align*}
for $s$ the permutation of indices $1 \leftrightarrow 2$. We have used
the scalar inequality $x-1 \ge \ln x$ for all $x > 0$. Likewise, for $\phi=1$,
\begin{align*}
J(1)&=\sum_{i} D_i \ln D_i - D_i + \Lambda_i  - \sum_i D_i \ln
\Lambda_i,\\
&=\sum_{i} D_i \brk{ \frac{\Lambda_{i}}{D_i} - 1 - \ln \brk{ \frac{\Lambda_{i}}{D_i} } } \ge 0.
\end{align*}
The functional $J$ is thus non negative on $[0,1]$, which ends the proof
in dimension 2.

More generally, in dimension $d$, let us define the following affine functional:
\begin{align*}
J( \{ \Omega_{ij}^2 \} ) &= \tr\brk{\brk{ \ln \strs - \ln \str} \strs - \brk{\strs - \str} }, \\
&= \sum_{i} D_i \ln D_i - D_i + \Lambda_i - \sum_{i,j} \Omega_{ij}^2 D_i \ln \Lambda_j.
\end{align*}
The functional $J$ is defined on the convex polyhedron
$\mathcal{P} = \left\{\{\Omega_{ij}^2\} \in \R_+^{d^2}, 
\forall i \sum_j \Omega_{ij}^2 = 1,
\forall j \sum_i \Omega_{ij}^2 = 1 \right\}$,
and 
$\mathcal{Q} = \BRK{\{\Omega_{ij}^2\} \in \R_+^{d^2}, \Omega \Omega^T = \I}$
is a subset of $\cal P$.
All maxima and minima of $J$ on $\cal P$ are reached at vertices of the
convex polyhedron $\mathcal{P}$ (Theorem 10.3-2 in \cite{ciarlet-85}).
Besides, $\{\Omega_{ij}^2\} $ is a vertex (that is, an extremal point) of $\cal P$
if, and only if, for all $1\le i\le d$, there exists one, and only one, 
$j=s(i)$, $1\le j\le d$, such that $\Omega_{is(i)}^2\ne 0$
(Theorem 10.3-1 in \cite{ciarlet-85}).
Thus, the vertices of $\cal P$ are exactly the $ \{\Omega_{ij}^2\}$ such
$$ \Omega_{ij}^2 = \left\{ 
\begin{array}{l}
1,\ j=s(i) \\
0,\ j\ne s(i)
\end{array}
\right. $$ 
where $s$ is  a permutation on $\{1, \ldots, d\}$. Now, for any permutation $s$, we have:
\[ 
\begin{split}
J & = \sum_{i} D_i \ln D_i - D_i + \Lambda_i - D_i \ln \Lambda_{s(i)} \\
& = \sum_{i} D_i \brk{ \frac{\Lambda_{s(i)}}{D_i} - 1 - \ln \brk{ \frac{\Lambda_{s(i)}}{D_i} } } \ge 0,
\end{split}
\]
using the scalar inequality $x-1 \le \ln x$ for all $x > 0$. 
Hence the result: $J( \{ \Omega_{ij}^2 \} )\ge0$, $\forall \{ \Omega_{ij}^2 \} \in \mathcal{Q}$.

\subsection{Proof of Lemma \ref{lemma:dtrdt}}\label{ap:dtrdt} 


Since $\strs \in C^1([0,T))$ is symmetric positive definite, it can be decomposed as: $\forall t \in [0,T)$
\[
\strs(t) = R(t)^T \Lambda(t) R(t)
\]
where the unitary matrix $R$ and the diagonal matrix $\Lambda$ are
continuously differentiable.
Let us compute the following derivatives:
\begin{eqnarray*}
\deriv{}{t} \ln \strs = \deriv{R^T}{t} \ln\Lambda R + R^T\deriv{\Lambda}{t}\Lambda^{-1}R +
R^T \ln\Lambda \deriv{R}{t},
\\
\deriv{}{t} \strs = \deriv{R^T}{t} \Lambda R + R^T\deriv{\Lambda}{t}R +
R^T \Lambda \deriv{R}{t}.
\end{eqnarray*} 
Thus, we obtain:
\[
\tr\brk{\deriv{}{t} \ln \strs} = \tr\left(\deriv{\Lambda}{t}\Lambda^{-1}\right) +
\tr\left(\ln\Lambda \deriv{}{t} (R R^T)\right)
\]
and
\[
 \tr \left( \deriv{\strs}{t} \strs^{-1} \right) = \tr\left(\deriv{\Lambda}{t}\Lambda^{-1}\right) +
\tr \left(\deriv{}{t}(R^TR)\right).
\]
Now, for all times $t$,
\[
R^TR \equiv \I,
\]
hence~\eqref{eq:tracelog}. The proof of \eqref{eq:trace} is similar.

\section{Proof of Lemma~\ref{lem:spd}}\label{ap:spd}

Let us introduce
$$t_0=\inf \{ t >0, \, \strs \text{ is not symmetric positive definite} \},$$
with convention $t_0=\infty$ if $\{ t >0, \, \strs \text{ is not symmetric positive definite} \}=\emptyset$.
Since $\strs(t=0)$ is symmetric positive definite, it remains so at least for small times $0
\leq t < \dt$, by continuity of the eigenvalues with respect to the
time variable $t$. Thus, $t_0 \ge \dt > 0$.

Let us assume that $t_0 < \infty$. First, one can define the logarithm $\ln \strs$ of the matrix $\strs$,
which satisfies the equation for $\lstr$ in system~\eqref{eq:oldroyd-b-log} for $t \in [0,t_0)$.
Taking the trace of the equation for $\lstr$ in system~\eqref{eq:oldroyd-b-log}, we get for $\ln \strs$:
\begin{equation} \label{lem:spd-eq1}
 \frac{D}{Dt}\ln \det \strs = \frac{1}{\Wi} \tr(\strs^{-1} -\I),
\end{equation}
where we have introduced the convective derivative
$$\frac{D}{Dt}=\left(\deriv{}{t} + (\vel \cdot \grad) \right).$$
Besides, for any positive definite matrix $\strs^{-1}$,
we have:
\begin{equation} \label{lem:spd-eq2}
\frac{ \tr(\strs^{-1}) }{ d } \geq (\det \strs^{-1})^{1/d},
\end{equation}
which follows from the convex inequality between geometrical and
arithmetical means. Thus, combining \eqref{lem:spd-eq1} and
\eqref{lem:spd-eq2}, we get, on the time interval $[0,t_0)$:
\begin{equation} \label{eq:detsigma}
  \frac{D}{Dt} (\det \strs)^{1/d} = \frac1d (\det \strs)^{1/d}   \frac{D}{Dt} \ln \det \strs \\
 \geq \frac{1}{\Wi} \brk{ 1 - (\det \strs)^{1/d} }.
\end{equation}

Now, by continuity of the eigenvalues with respect to $t$, 
one eigenvalue at least converges to zero as $t \rightarrow t_0^-$, 
which implies $\det \strs \rightarrow 0^+$.
Then, there exists $\eta > 0$ such that, for times $t_0 - \eta < t < t_0$, we have:
$$ 0 < \det \strs < 1, $$
and by \eqref{eq:detsigma}
\begin{equation}\label{eq:bibi} 
\frac{D}{Dt}(\det \strs)^{1/d} > 0.
\end{equation}
But then, $t_0$ cannot be the first time when $\det \strs = 0$, 
otherwise one should have $ \frac{D}{Dt}(\det \strs)^{1/d}(t_0^-) \leq
0$, which contradicts~\eqref{eq:bibi}. Thus $t_0=\infty$ which ends the
proof of Lemma~\ref{lem:spd}.

\section{Proof of Lemmas~\ref{lemma:decomposition} and~\ref{lemma:discrete-decomposition}}\label{app:decomposition}

Let $\strs$ be any piecewise constant function with symmetric positive definite matrix values 
$\strs \in \mathcal{S}_+^*((\Pz)^{d\times d})$. 
We define the following linear applications for functions $N$ with skew-symmetric real matrix values 
$N \in \mathcal{A}(\R^{d\times d})$:
\begin{align}
f_\strs & : N \rightarrow \frac12 (N\strs^{-1}-\strs^{-1}N) \\
g_\strs & : N \rightarrow \frac12 (N\strs^{-1}+\strs^{-1}N).
\end{align}
The linear application $g_\strs$ is an automorphism in $\mathcal{A}(\R^{d\times d})$ and 
the linear application $f_\strs$
is a one-to-one onto function from $\mathcal{A}(\R^{d\times d})$ to the set of symmetric real matrices with null trace $\strs \in \mathcal{S}^0((\Pz)^{d\times d})$.Both applications are of rank $1$. 

Now, for any $\x \in \D$, the symmetric positive definite matrix $\strs(\x)$ can be diagonalized 
with an orthogonal matrix $O(\x) \in \mathcal{O}(\R^{d\times d})$ ($O^TO=\I=OO^T$)
and a diagonal positive definite matrix $D(\x) \in \mathrm{D}((\R_+^*)^d)$:
$$ \strs(\x) = O(\x)^T D(\x) O(\x). $$
So, if we change the basis, we can equivalently work with a diagonal positive definite matrix $\strs(\x)\in \mathrm{D}((\R_+^*)^d)$.

Let $\velhnp$ be some vector field in $(\mathbb{P}_k)^2$ with $k \in \mathbb{N}$ and
$\lstrhnp \in \mathcal{S}\brk{(\Pz)^\dd}$ an elementwise constant symmetric rank-two-tensor field.
The rank-two-tensor field $\gvelhnp$ takes its values in $(\mathbb{P}_{k-1,disc})^{d\times d}$.
For $\x \in \D$, $\gvelhnp(\x)$ decomposes elementwise 
in the basis where the matrix of the linear application 
associated with the symmetric matrix $\lstrhnp \in \mathcal{S}(\Pz)^\dd$ is diagonal.
More precisely, in this basis, $\gvelhnp(\x)$ uniquely decomposes as a direct sum in the supplementary spaces (of respective dimension $1$, $2$ and $1$):
$$ \R^{d\times d} 
= \mathcal{A}(\R^{d\times d})\oplus\mathrm{D}(\R^d)\oplus\mathcal{S}^0(\R^{d\times d}). $$
This decomposition reads:
\begin{align}
\gvelhnp(\x) = & \brk{ \frac{\gvelhnp(\x) + \gvelhnp(\x)^T}{2} - \mathrm{diag}\brk{\gvelhnp(\x)} }
 \\
& + \mathrm{diag}\brk{\gvelhnp(\x)} + \frac{\gvelhnp(\x) - \gvelhnp(\x)^T}{2} .
\end{align}

Since $\emstrhnp$ is an elementwise constant diagonal positive definite matrix,
in order to get the following decomposition at each $\x \in \D$:
\begin{align}
\gvelhnp(\x) & = \Omh^{n+1}(\x) + \B_h^{n+1}(\x) + \Nh^{n+1}(\x)\emstrhnp(\x) \\
& = \frac{\Nh^{n+1}(\x)e^{-\lstrhnp(\x)} - e^{-\lstrhnp(\x)}\Nh^{n+1}(\x)}{2} + \B_h^{n+1}(\x) \\
& \quad + \Omh^{n+1}(\x) + \frac{\Nh^{n+1}(\x)e^{-\lstrhnp(\x)}+e^{-\lstrhnp(\x)}\Nh^{n+1}(\x)}{2},
\end{align}
we need to do the following identification:
\begin{gather}
\B_h^{n+1}(\x) = \mathrm{diag}\brk{\gvelhnp(\x)}, \\
\Nh^{n+1}(\x) = f^{-1}_{e^{-\lstrhnp(\x)}}\brk{ \frac{\gvelhnp(\x) + \gvelhnp(\x)^T}{2} -\mathrm{diag}\brk{\gvelhnp(\x)} },
 \\
\Omh^{n+1}(\x) =  \frac{\gvelhnp(\x) - \gvelhnp(\x)^T}{2} - g_{e^{-\lstrhnp(\x)}}(\Nh^{n+1}(\x)),
\end{gather}
which is unique because of the requirement that $\B_h^{n+1}(\x)$ commutes with the diagonal
matrix $e^{-\lstrhnp(\x)}$.  

With the choice above, the matrices $\Omh^{n+1},\B_h^{n+1},\Nh^{n+1}$ have entries in $(\mathbb{P}_{k-1,disc})^{2\times 2}$ because $e^{-\lstrhnp(\x)}\in (\Pz)^\dd$.
Moreover, we clearly have: $\tr(\B_h^{n+1}) = \div\velhnp$. Besides, notice that the triplet ($\Omh^{n+1},\B_h^{n+1},\Nh^{n+1}$)
is unique provided the change of basis for $\emstrhnp$ to be diagonal is unique,
which means provided that $\emstrhnp$ is not singular (all the eigenvalues should be distinct one another).

\section{Proof of Lemma~\ref{lemma:lagrange}}\label{ap:lagrange}

For any $\bphi_h \in (\Pod)^\dd$, we want to show that:
\[
\intd \bphi_h : \tilde{\bphi_h} = \intd \pih{\bphi_h} : \tilde{\bphi_h},\ 
\forall \tilde{\bphi_h} \in (\Pz)^\dd.
\]
It is enough to prove Lemma~\ref{lemma:lagrange} on each simplex $K_k \in {\cal T}_h$
and in the scalar case.
Let $(x_i)_{1\leq i\leq 3}$ be the vertices of the
simplex $K_k$ and $(\psi_i)_{1\leq i\leq 3}$ the corresponding (linear) basis
functions in $\Po$. 
Then, the function $\bphi_h|_{K_k} \in \Po$ reads
\[
\bphi_h|_{K_k}(x) = \bphi_h(x_1)\psi_1(x) + \bphi_h(x_2)\psi_2(x) +
\bphi_h(x_3)\psi_3(x),\ \forall x \in K_k.
\]
For every $\tilde{\bphi_h} \in \Pz$,
\[
\int_{K_k} \bphi_h \tilde{\bphi_h} 
= \tilde{\bphi_h} \left( \int_{K_k} \bphi_h \right) 
= \tilde{\bphi_h}\frac{|K_k|}{3} \left( \bphi_h(x_1) + \bphi_h(x_2) + \bphi_h(x_3) \right)
\]
because $\int_{K_k} \psi_i = \frac{|K_k|}{3}$. 
Moreover, $\bphi_h|_{K_k} \in \Po$, hence
\[
\frac{1}{3} \left( \bphi_h(x_1) + \bphi_h(x_2) + \bphi_h(x_3) \right)
=\bphi_h\left( \frac{x_1+x_2+x_3}{3} \right)=\bphi_h(\theta_{K_k})
\]
which means
\[
\int_{K_k} \bphi_h \tilde{\bphi_h} = \int_{K_k} \tilde{\bphi_h} \bphi_h(\theta_{K_k})
= \int_{K_k} \pih{\bphi}\tilde{\bphi_h}.
\]


\paragraph*{Acknowledgements} We would like to thank A.~Ern and J.W.~Barrett for fruitful discussions.

\bibliography{biblio_HD,biblio_MHD,ma_biblio,biblio_MicMac}
\bibliographystyle{plain}

\end{document}